\documentclass[reqno, 11pt]{amsart}

\usepackage[T1]{fontenc} 
\usepackage[latin1]{inputenc} 
\usepackage{mathptmx} 
\usepackage{lmodern} 
\usepackage{amsthm} 
\usepackage{amsmath} 
\usepackage{amsfonts} 
\usepackage{amssymb} 
\usepackage{mathrsfs}
\usepackage{bbm}
\usepackage[left=2.1cm, right=2.1cm, top=2.1cm, bottom=2.1cm]{geometry} 
\usepackage[ngerman,german,english]{babel} 
\usepackage{paralist} 
\usepackage{graphicx} 
\usepackage[usenames,dvipsnames]{color}
\usepackage{etoolbox}
\usepackage{caption}
\usepackage{subcaption}

\usepackage[colorlinks=true, pdfstartview=FitV, linkcolor=BrickRed,citecolor=black, urlcolor=black]{hyperref}
\hypersetup{linktocpage}

\numberwithin{equation}{section}
\theoremstyle{plain} 
\newtheorem{thm}{Theorem}[section] 
\newtheorem{cor}[thm]{Corollary}
\newtheorem{lem}[thm]{Lemma}
\newtheorem{pro}[thm]{Proposition}

\theoremstyle{definition}
\newtheorem{defi}[thm]{Definition}
\newtheorem{rem}[thm]{Remark}

\theoremstyle{remark}

\newcommand{\C}{\mathbb{C}}
\newcommand{\N}{\mathbb{N}}

\newcommand{\R}{\mathbb{R}}

\renewcommand{\P}{\mathcal{P}}

\renewcommand{\S}{\mathcal{S}}
\newcommand{\K}{\mathcal{K}}

\newcommand{\M}{\mathcal{M}}
\newcommand{\Nmom}{\mathcal{N}}
\newcommand{\CO}{\mathbb{K}}

\newcommand\restr[2]{{
		\left.\kern-\nulldelimiterspace 
		#1 
		\vphantom{\big|} 
		\right|_{#2} 
}}

\newcommand{\norm}[2][]{\left\|#2\right\|_{#1}}

\newcommand{\dualbra}[2]{\left\langle #1,#2 \right\rangle}

\newcommand{\Lpnorm}[2][]{\ifthenelse{\equal{#1}{}}{\norm{#2}_{L^p}}{\norm{#2}_{L^p(#1)}}}
\newcommand{\Hknorm}[2][]{\ifthenelse{\equal{#1}{}}{\norm{#2}_{H^k}}{\norm{#2}_{H^k(#1)}}}

\newcommand{\QV}[2][]{\ifthenelse{\equal{#1}{}}{\langle #1 \rangle}{\langle #1,#2 \rangle}}

\newcommand{\ind}{\mathbbm{1}}

\newcommand{\B}{\mathscr{B}}
\newcommand{\Meas}{\mathscr{M}}

\newcommand{\Rem}{\mathscr{R}}
\newcommand{\Err}{\mathscr{E}}
\newcommand{\Lop}{\mathcal{L}}

\newcommand{\loc}{\operatorname{loc}}
\newcommand{\sym}{\operatorname{sym}}
\newcommand{\Lip}{\operatorname{Lip}}

\title[Long-time behaviour of rouleau formation models]{Long-time behaviour of rouleau formation models}

\author[E. Franco]{Eugenia Franco}
\address{Institute for Applied Mathematics, University of Bonn}
\email{franco@iam.uni-bonn.de}

\author[B. Kepka]{Bernhard Kepka}
\address{Institute of Mathematics, University of Z\"urich}
\email{bernhard.kepka@math.uzh.ch}

\subjclass[2020]{92-10, 35R09, 82C21}

\keywords{Coagulation equation, self-similar asymptotics, localization, rouleaux model}

\begin{document}

\begin{abstract}
In this paper we study a two-component coagulation equation that models the aggregation of rouleaux in blood. We consider product kernels that have homogeneity $2$ and we characterize the initial data that lead to gelation. 
We prove that, when gelation occurs, the solution to the two-component coagulation equation localizes along a direction of the space of cluster as $ t $ approaches the gelation time $0 < T_* < \infty $. 
The localization direction is determined by the initial datum.
We also prove that the solution converges to a self-similar solution along the direction of localization. 
\end{abstract}
\maketitle

\tableofcontents

\section{Introduction}\label{sec:Intro}
\subsection{State of the art}
The classical Smoluchowski's coagulation equation 
\[ 
\partial_t f(t,x)  =\frac{1}{2} \int_0^x   K(x, y)  f(t,  y ) f(t,x-y ) dy   - f(t, x)  \int_0^\infty  K(x,y) f(t, y ) dy , \quad f\mid_{t=0}=f_0,
\]
is a mean-field model that was derived in order to describe the evolution in time of the size distribution $ f(t,x)$ of a system of spherical particles that coalesce upon binary collision. 
The coagulation kernel $ K(x,y) $ describes the rate at which a particle of size $ x>0 $  merges with a particle of size $ y>0 $. The microscopic mechanisms behind coagulation events are summarized by the coagulation kernel, which is assumed to depend only on the size of the coalescing particles. 

Smoluchowski's coagulation equation (and variations of that) has been used in order to model different phenomena. For instance to model the coagulation of aerosols in the atmosphere \cite{friendlander2000smoke}, the grouping of animals \cite{gueron1995dynamics}, hemagglutination \cite{PerelsonSamsel1982} and polymerization processes \cite{blatz1945note}.

A generalization of the classical Smoluchowski's coagulation equation that has been studied by several authors is the multicomponent coagulation equation (see \cite{bertoin2009two,cristian2024coagulation,delacour2022mathematical,ferreira2021localization,ferreira2024asymptotic,ferreira2025global,krapivsky1996aggregation,norris2000cluster}). 
The solution $f(t, z) $ to a $n$-component coagulation equation represents the concentration of particles of type $z \in  \R_+^n $ for $n \geq 1 $ at a certain time. 
The variable $z\in  \R_+^n    $ could represent different features. For instance we can assume that particles are aggregates of  different types of molecules. In this case the vector $z =(z_i)_{i=1}^n  $ represents the chemical compositions of clusters. 
Another possibility is that each particle is characterized by its size and its geometry. We could then assume that each particle is described by the vector $ z =(z_1, z_2 ) $, where $z_1 $ is the volume and $z_2 $ is a parameter that describes the geometry of the particle.  

The existence of time dependent solutions to a general class of  multicomponent coagulation equations is studied in \cite{ferreira2025global,norris2000cluster}. 
The existence results proven in \cite{ferreira2025global,norris2000cluster} include kernels that exhibit gelation, i.e. loss of mass conservation in finite time due to the formation of a particle of infinite  size. 

A relevant feature of multicomponent coagulation equations is the so-called \textit{localization} phenomena. 
In particular, localization occurs when the solution to the multicomponent coagulation equation localizes asymptotically (as time tends to infinity) along a line that depends on the initial condition.  
Localization results have been proven for a class of multicomponent coagulation equations that do not exhibit gelation in \cite{ferreira2021localization,ferreira2024asymptotic}. 
A localization result has been proven in \cite{hoogendijk2024gelation} for a class of kernels that give rise to gelation and very specific initial data (monodisperse initial data). 

A common assumption concerning the coagulation kernel is the homogeneity of the kenerl, i.e. 
\[ 
K(ax, a y ) = a^\gamma K (x,y) , \quad  a,\,  x,\,  y \geq 0 
\]
for some $\gamma \in \R $. An important class of solutions to the classical coagulation equations with homogeneous kernels are the so-called \textit{self-similar} solutions of the form
\[ 
f(t, x) = s(t)^\alpha  \Phi \left( \frac{x}{s(t)} \right) 
\]
where $ \alpha \in \R $ and $ s(t) $ is a suitable function of time. 
The precise form of $s(t) $ and $\alpha $ can be obtained by a dimensional analysis. Such self-similar solutions are expected to describe the long-time behaviour of solutions to the coagulation equation. However, the domain of attraction of self-similar solutions  has been characterized only for the solvable kernels, i.e. for $ K(x,y)=1 $, $K (x,y) = x+y$, $K(x,y) = x y $, using Laplace transform methods (see \cite{menon2004approach}).

In \cite{ferreira2024asymptotic} the long time asymptotics of the solution to a class of multicomponent coagulation equations with a specific choice of coagulation kernels has been analysed. 
In particular it is proven in \cite{ferreira2024asymptotic} that, for kernels that are constant along each direction (in the state space) the solution localizes along a line as time tends to infinity and it converges to a self-similar solution along that line.

Finally, a multicomponent coagulation equation describing the coagulation of a system of particles with ramifications has been formulated and studied by Bertoin (see \cite{bertoin2009two}). 
In that paper explicit solutions are obtained under specific assumptions on the initial condition and the occurrence of gelation is  analysed in detail.

\subsection{Discrete and continuous coagulation equation for rouleaux}
In this paper we are interested in modelling the coagulation of rouleaux in blood. A rouleau is a stack of red blood cells, also called erythrocytes. The erythrocytes have a specific shape; they are biconcave thin discs. 
The aggregates that erythrocytes form appear to have very ramified structures due to various possible coagulation events. For instance, two erythrocytes can adhere on their faces, forming a cylinder. Another possibility is that an erythrocyte adheres to the wall of a rouleau. The goal of this paper is to formulate a multicomponent coagulation equation that allows to study the evolution in time of the geometry and of the size of rouleaux. 

A model describing the coagulation of rouleaux in blood has been proposed and analysed by Perelson and Samsel in \cite{PerelsonSamsel1982}.
In this paper, a system of ordinary differential equations describing the concentration of erythrocytes and the concentration of (possibly ramified) rouleaux is formulated and studied numerically. In \cite{PerelsonSamsel1984} the model studied in \cite{PerelsonSamsel1982} was generalized to describe a system of rouleaux that coalesce and fragment. Specifically the stability of steady states is analysed in \cite{PerelsonSamsel1984} under the assumption that the fragmentation and coagulation rates satisfy the detailed balance assumption.

The goal of this paper is to formulate and analyse a class of multicomponent coagulation equations that describe the dynamics of rouleaux taking into account their shape. More precisely, we assume that each particle, that is rouleau, is characterized by three variables $ (c,a,s) $ where $ a \geq 0 $, $ c \geq 2 $ and $ s\geq1 $. To this end, we associate to each rouleau a tree with vertices with degree $ d \in \{1,2,3\} $. The variable $s$ represents the size of the rouleau, i.e. the number of edges in the tree. The variable $c $ is the number of faces of the rouleau and it is identified with the number of vertices with degree $1$, i.e. the leafs of the tree. Finally, $a \geq 0 $ is the number of sites on the wall of a rouleau, to which another rouleau, or an erythrocyte, can adhere. Alternatively, we can identify $a$ with the number of edges of degree $ 2 $ in the tree associated with the rouleau. A monomer (i.e. an erythrocyte) is characterized by the triple $(2,0 ,1) $.

Based on this, we model three types of coagulation events.
\begin{enumerate}[(1)]
	\item Two rouleaux can adhere on their faces:  
	\[
	R_{1} : (c_1, a_1, s_1 ) + (c_2 , a_2, s_2  )   \mapsto ( c_1+ c_2 - 2 , a_1+ a_2 + 1,  s_1 + s_2 ), \quad c_1 \geq 2,\,  c_2 \geq 2,\,  a_1 \geq 0,\,  a_2 \geq 0. 
	\]
	We assume that when two rouleaux adhere on a face a site for the attachment of a rouleau is formed at the junction. If we identify the rouleau with a tree, then this coagulation consists in the attachment of one leave of one tree to a leaf of the other tree. The vertex that is formed due to the coagulation of these two leafs is a vertex of degree $2 $ (i.e. an arm).  We refer to Figure \ref{fig1} for an example of coagulation of type $ R_1 $. 
	
	\begin{figure}
		\centering
		\includegraphics[width=0.4\linewidth]{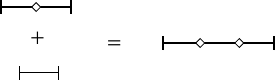}
		\caption{Coagulation of the first type. Here the vertices of degree one are denoted with "$|$", the ones of degree two with an empty diamond $\lozenge$.
		}\label{fig1}
	\end{figure}
	
	\item The second type of coagulation event that we model is that the face of a rouleau adheres on a free site of another rouleau:  
	\[
	R_{2}:  (c_1, a_1, s_1 ) + (c_2, a_2,s_2)   \mapsto ( c_1+ c_2 - 1, a_1+ a_2 - 1  , s_1 + s_2 ), \quad c_1 \geq 2,\, c_2 \geq 2,\, a_1 \geq 1,\, a_2 \geq 1. 
	\]
	In this coagulation event a leaf of a tree is attached to a vertex of degree two of another tree. Notice that during this type of coagulation a vertex of degree three is formed due to the coalescence of a vertex of degree one and a vertex of degree two. We refer to Figure \ref{fig2} for an example of coagulation of type $R_2$. 
	
	\begin{figure}
		\centering
		\includegraphics[width=0.4\linewidth]{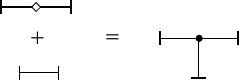}
		\caption{ Coagulation of the second type. Here the vertices of degree one are denoted with "$|$", the one of degree two with an empty diamond $\lozenge$. The vertices with degree three with a $\bullet$.
		}\label{fig2}
	\end{figure}

	\item Finally, we assume that free sites of two rouleaux can adhere: 
	\[
	R_{3} : (c_1, a_1, s_1 ) + (c_2 , a_2, s_2  )   \mapsto ( c_1+ c_2  , a_1+ a_2-2,  s_1 + s_2 + 1 ), \quad c_1 \geq 2,\, c_2 \geq 2,\, a_1 \geq 2,\, a_2 \geq 2. 
	\]
	This coagulation consists in the attachment of a vertex of degree two of a tree to a vertex of degree two of another tree. During this type of coagulation we assume that two vertices of degree three are formed and an edge is added to the tree. We refer to Figure \ref{fig3} for an example. 
	\begin{figure}
		\centering
		\includegraphics[width=0.4\linewidth]{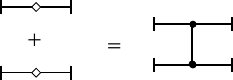}
		\caption{Coagulation of the third type. Here the vertices of degree one are denoted with "$|$", the one of degree two with an empty diamond $\lozenge$. The vertices with degree three with a $\bullet$.
		}\label{fig3}
	\end{figure}
\end{enumerate}
As mentioned before, rouleaux are stacks of erythrocytes that are formed by a sequence of reactions of the form $ R_{1}$, $R_{2} $, $R_{3} $. 
The triple $(2, 0 , 1 ) $ that characterize the erythrocytes satisfy the condition 
\begin{equation} \label{intro:cons}
a + 2 c = s + 3. 
\end{equation}
On the other hand, all the reactions introduced above conserve the law \eqref{intro:cons}. This conservation law allows us to assume that particles are characterized only by the variables $c $ and $a$. 

We now describe our choice of coagulation rates, yielding the form of the coagulation kernel. We assume that the coagulation event $R_{1}$ takes place at rate that is proportional to the product of the number of faces of the two rouleaux, more precisely we assume that 
\[
	\K_{1} ((c_1, a_1 ) , (c_2, a_2 ) ) = c_1 c_2.
\] 
Similarly, we assume that the rate at which the reaction $R_{ca } $ takes place is given by 
\[
	\K_{1} ((c_1, a_1 ) , (c_2, a_2 ) ) = \dfrac{1}{2}\left( c_1 a_2 + c_2 a_1 \right).
\] 
 and the rate at which the reaction $R_{3} $ takes place is 
\[
	\K_{3} ((c_1, a_1 ) , (c_2, a_2 ) ) = a_1 a_2.
	\] 
We stress that the choice of the coagulation kernels that we make in this paper is in agreement with the choices made in \cite{bertoin2009two,PerelsonSamsel1982}. However, it would be relevant to derive the coagulation kernels starting from a precise analysis of the mechanisms behind the coagulation of rouleaux in blood.  

The multicomponent coagulation equations that we study in this paper is the following 
\begin{align} \label{intro:dis_strong_eq}
	\begin{split}
		\partial_t f (t,v ) &= \frac{1}{2}  \sum_{ i =1 }^3  \sum_{  v' \in B_i^{(d) } (v)   } \alpha_i f(t, v- v' + \xi_i ) f(t,  v' ) \K_i ( v-v' + \xi_i, v' ) \\
		&- f(t, v )  \sum_{  v' \in \S^{(d) }  }  f( t, v' ) \sum_{ i =1 }^3 \alpha_i \K_i (v, v' )
	\end{split}
\end{align}
where we are using the following notations: $\alpha \in \R^3_+ $ is such that $\alpha \neq 0 $,  $v=(c,a)^\top \in \N_{\geq 2 } \times \N_{\geq 2 } $, 
\begin{align}\label{eq:sec1:DefXi}
    \xi_{1} = \left( \begin{matrix}
        -2 \\ 1
    \end{matrix} \right) , \quad  \xi_{2} = \left( \begin{matrix}
        -1 \\ - 1
    \end{matrix} \right) , \quad  \xi_{3} = \left( \begin{matrix}
        0 \\ -2 
    \end{matrix} \right), 
\end{align}
and 
\begin{align*}
	\S^{(d)}= \N_{\geq 2 }\times \N_{\geq 2 }, \quad B_i^{(d) } (v) := \{ v' \in S^{(d)} : v-v' + \xi_i \in \S^{(d)}  \}. 
\end{align*}
The solution $f(t, v) $ to equation \eqref{intro:dis_strong_eq} describes the evolution in time of a system of rouleaux that coalesce according to the coagulation mechanisms $ \{ R_{i} \}_{ i =1 }^3 $. The vector $ \alpha\in \R^3_+ $ allows to give different weights to the coagulation events $ \{ R_{i} \}_{ i =1 }^3  $. In particular, setting $ \alpha_j=0 $ allows to to switch off individual coagulation mechanisms.

Notice that we assumed the state space to be $  \N_{ \geq 2 } \times  \N_{ \geq 2 } $ even if coagulation events of type $ i \in \{ 1, 2 \} $ can occur between clusters $(c, a) $ with $ a \in \{ 0 , 1 \}$. This is an assumption that we make in order to simplify our analysis. However, we expect to have similar results if we assume that the state space is $ \N_{\geq 2}\times \N $.

As is standard in the theory of coagulation models, we can extend the previous model to a continuous coagulation model, that is, the discrete state space $ \S^{(d)} $ is replaced by 
\begin{align*}
 \S = [2, \infty)\times [2, \infty).
\end{align*}
The continuous version of the discrete coagulation equation \eqref{intro:dis_strong_eq} is then given by
\begin{align}\label{sec1:CoagulationEq}
	\partial_t f = \CO(f,f), \quad f\mid_{t=0}=f_0,
\end{align}
where for $ z=(x,y) \in \S $
\begin{align}\label{sec1:CoagulationOp}
    \begin{split}
        \CO(f,f)[z] &= \frac{1}{2} \sum_{ i=1 }^3 \alpha_i\int_{ B_z^i }  \K_i( z', z- z' - \xi_i  ) f(t,  z' ) f(t, z- z' - \xi_i  ) dz'  - f(t, z)  \sum_{ i=1 }^3 \alpha_i\int_{\S} \K_i(z,z') f(t,z') dz'.
    \end{split}
\end{align}
Here, we are using the abbreviation in \eqref{eq:sec1:DefXi} and 
\[ 
B^i_z := \{ z'=(x', y' ) \in \S : x' \leq \xi_{i, 1} + x ,\  y' \leq \xi_{i, 2} + y   \}, 
\]
as well as
\begin{align}
	\K_{1}(z,z') &= z^\top K_{1}z'= x x', \quad K_{1}= \left( \begin{matrix}
		1 & 0 \\ 0 & 0
	\end{matrix} \right), \label{kernel_cc}
	\\
	\K_{2}(z,z') &= z^\top K_{2}z'= \dfrac{1}{2}\left(x y'+x'y\right), \quad K_{2}= \dfrac{1}{2} \left( \begin{matrix}
		0 & 1 \\ 1 & 0
	\end{matrix} \right), \label{kernel_ca}
	\\
	\K_{3}(z,z') &= z^\top K_{3}z'= y y', \quad K_{3}= \left( \begin{matrix}
		0 & 0 \\ 0 & 1
	\end{matrix} \right). \label{kernel_aa}
\end{align}
In this paper we study measure-valued solutions to \eqref{sec1:CoagulationEq}. In particular, choosing measures supported on $ \S^{(d)} $ allows to embed the dynamics of the discrete model \eqref{intro:dis_strong_eq} into the continuous one.

We conclude this section discussing the conservation laws associated with equation \eqref{sec1:CoagulationEq}. 
Notice that if $ \alpha_i>0 $, for all $ i=1,2,3 $, then the system does not have any conserved quantity. This is due to the fact that the coagulation events $ \{ R_{i} \}_{ i =1 }^3 $ do not have a common conserved quantity. Note that such a conserved quantity is necessarily linear in the variables $ (c,a,s) $.

It is important to mention that the coagulation kernels that we consider have homogeneity $ \gamma =2 $. Therefore, we expect gelation to occur. For the classical coagulation equation an indicator for this phenomenon is the violation of mass conservation after the so-called gelation time $ T_* \in [0, \infty) $. This is due to the formation of particles of infinite size at time $T_* $ and so the flux of mass towards infinity is positive at time $ t= T_* $. As we will show, this is in fact the case for our rouleau formation models with the exception of specific initial conditions. A similar type of gelation phenomena, for systems that do not have conserved quantities, occurs for coagulation equation with source (see for instance \cite{ferreira2021stationary}). 

\subsection{Main results: localization and asymptotic self-similarity}
In this paper we study the existence and the uniqueness of a time dependent solution to equation \eqref{sec1:CoagulationEq}. 
Notice that the coagulation kernels \eqref{kernel_cc}-\eqref{kernel_ca}-\eqref{kernel_aa} that we consider  in this paper have homogeneity $\gamma =2 $ and could produce gelation. In Proposition \ref{pro:BlowUp} we determine conditions on $\alpha $ and on the initial datum that induce gelation. We also identify a particular case in which gelation does not occur (see Remark \ref{rem:no gel}). 

One of the  results that we prove in this paper is the localization of the time dependent solution that takes place as $t $ approaches the gelation time $T_*< \infty $. We study the localization phenomena only under the assumption that gelation occurs in finite time.  
In order to explain the localization result that we prove it is convenient to rescale the time dependent solution $f (t,  z ) $ according to the self-similar change of variables 
\begin{equation} \label{intro:selfsim}
	F(\tau , \eta ) := (T_* - t )^{ - 7 } f( t , z) , \quad t= t(\tau)= T_* (1- e^{- \tau }) , \ z =  ( T_* - t )^2 \eta. 
\end{equation}
In Subsection \ref{subsec:scaling} we motivate the choice of the self-similar scaling using dimensional analysis arguments. 
We prove that there exists a direction $\omega_\theta = \theta/|\theta |\in  \R^2_+ $, that depends only on the initial condition $f_0 $ in \eqref{sec1:CoagulationEq} and on the parameter $\alpha $, along which the function  $|\eta |^2 F(\tau, \eta ) $ localizes. The localization takes place as $\tau \to \infty $, hence when $ |\eta | $ is large.
More precisely we prove that 
\[ 
\lim_{\tau \to \infty } \int_{ \R^2 } |\eta |^2 \norm{ \dfrac{\eta }{|\eta |} - \omega_\theta }^2 F(\tau, \eta ) d \eta =0 . 
\]

Moreover we also prove that under natural regularity assumptions on the initial datum $f_0 $ (i.e. we assume that $f_0$  has finite fourth moment), the function $ |\eta |^2 F ( \tau , \eta ) $ converges to a self-similar solution along the localization line $\omega_\theta $. More precisely, our result can be summarized as follows 
\[ 
 F ( \tau , \eta ) \rightarrow \dfrac{1}{|\eta|}\delta_{\omega_\theta } \left( \dfrac{ \eta  }{|\eta|} \right)  F_s (|\eta| ) \text{ as } \tau \to \infty.  
\]
The function $F_s  $ is given by 
\begin{equation} \label{McLeod}
	F_s( r):= \dfrac{1}{ \sqrt{2 \pi K_0} }  r^{ - 5/2} e^{- r/ (2K_0)} 
\end{equation}
where $K_0>0$ is a parameter that depends on the localization line $\omega_\theta $ and hence is determined by the initial datum and by the parameter $\alpha $. The precise statements are summarized in Theorem \ref{thm:IntrodLocaliSelfSim} below.

We recall that $F_s$ is the self-similar profile that characterizes the self-similar solution to the coagulation equation with product kernel and initial datum with finite moments (see \cite{menon2004approach}). This is expected as the localization result allows us to reduce the dynamics of the two-dimensional coagulation equation to a one-dimensional equation along the localization line.

In order to prove our result we study in detail the time evolution of the second moments associated with the solution to equation \eqref{sec1:CoagulationEq} written in self-similar variables, i.e. associated with $F$.
Since we are working with a two-dimensional multicomponent equation the second moment $\M^2$ is a matrix that satisfies a matrix Riccati equation. Analysing the equation in detail it is possible to see that the second moment tensorizes as  $t \to T_* $, i.e. 
\[ 
\M^2[F] (t) \rightarrow \theta \otimes \theta, \text{ as } t \to T_*. 
\]
Here $\theta \in  \R_+^2$ depends on the initial datum and on $\alpha $. 
The fact that the second moment is tensorized as $t \to T_* $ allows us to prove our localization result. 

For the convergence towards the self-similar solution along the localization line we adapt the approach used by Menon and Pego in \cite{menon2004approach} for the one-dimensional coagulation equation with product kernel. In particular, we use Laplace transform methods.

\bigskip 

\paragraph{\bf Plan of the paper.}
The paper is organized as follows.
In Section \ref{sec:main results} we present the main results that we prove in this paper.
In Section \ref{subsec:scaling} we present some heuristic arguments on the localization and on the choice of the self-similar variables. 
In Section \ref{sec:WellPosed} we prove the existence and uniqueness of time dependent solutions to equation \eqref{sec1:CoagulationEq}.  
In Section \ref{subsec:StudyMoments} we study in detailed the first four moments of the solution to equation \eqref{sec1:CoagulationEq}. 
In Section \ref{sec:localization} we infer the localization of the solution to equation \eqref{sec1:CoagulationEq} in self-similar variables. In Section \ref{subsec:SelfSimilarConvergence} we give a proof of the convergence to the self-similar profile.

\subsection{Notation} \label{sec:noation}
Here, we summarize some notations used in this paper. We will write $ \R_+=[0,\infty) $ and $R_*=(0,\infty)$. For $ z\in \R^2 $ we use two different norms. We write $ |z|=|z_1|+|z_2| $ for the $ 1 $-norm and $ \norm{z}=\sqrt{|z_1|^2+|z_2|^2} $ for the euclidean norm. For a vector $ z\in \R^2 $ we also use the notation $ z^\perp = (-z_2,z_1)^\top $. 

A notational convenience for the study of higher moments is the introduction of tensors. To this end, let $ V $ be a finite dimensional vector space. We mostly consider the case $ V=\R^2 $. For $ n\in \N $ denote by $ V^{\otimes n} $ the vector space of $ n $-tensors. They are defined as $ n $-linear forms $ V^n\to \R $. Given a basis $ (b_i)_{i=1}^d $ of $ V $ a basis of $ V^{\otimes n} $ is given by the set of pure tensors
\begin{align*}
	b_{i_1}\otimes \cdots \otimes b_{i_n}, \quad i_1,\ldots, i_n\in \{1,\ldots, d\}.
\end{align*}  
In particular, $ \dim V^{\otimes n} = d^n $. Given a basis of $ V^{\otimes n} $ a tensor $ T\in V^{\otimes n} $ can be identified with a family of real numbers $ T_{i_1,\ldots, i_n} $, $ i_1,\ldots, i_n\in \{1,\ldots, d\} $. Note that for $ n=2 $ one can identify $ 2 $-tensors with matrices. Via the coordinate representation we can also define a norm via 
\begin{align*}
	\norm{T} = \left( \sum_{i_1,\ldots, i_n\in \{1,\ldots, d\}} T_{i_1,\ldots, i_n}^2\right)^{1/2}.
\end{align*}
It is convenient to define for a vector $ v\in V $ the shorthand notation
\begin{align*}
	v^{\otimes n} = v\otimes \cdots \otimes v \quad \text{($ n $ times)}.
\end{align*}
Furthermore, we define the subspace of symmetric tensors $ V^{\otimes n}_{\sym} $, that is $ n $-linear forms symmetric in all $ n $ components. On the level of the coordinate representation a symmetric tensor $ T\in V^{\otimes n}_{\sym} $ satisfies for any permutation $ \sigma \in \S(n) $
\begin{align*}
	T_{i_1,\ldots, i_n} = T_{\sigma(i_1), \ldots, \sigma(i_n)}, \quad \forall i_1,\ldots, i_n\in \{1,\ldots, d\}.
\end{align*}
Note that $ \dim V^{\otimes n}_{\sym}=\binom{n+d-1}{n} $. Furthermore, we can define the projection $ \P_n: V^{\otimes n}\to V^{\otimes n}_{\sym} $ via
\begin{align*}
	\P_n(T)_{i_1,\ldots, i_n} = \dfrac{1}{n!}\sum_{\sigma \in \S(n)} T_{\sigma(i_1), \ldots, \sigma(i_n)}, \quad \forall i_1,\ldots, i_n\in \{1,\ldots, d\}.
\end{align*}
In fact, this definition is independent of the coordinate representation. 

Moreover, for $ \S\subset \R^2 $ we denote by $ \Meas(\S) $ the space of finite Borel measures on $ \S $. The space of non-negative measures is denoted by $ \Meas_+(\S) $. For $ p\geq0 $ we write $ \Meas_{p, + }(\S) $ for measures in $ \Meas_+(\S) $ having finite moments of order $ p\geq0 $. We equip the space $ \Meas(\S) $ with the weak topology of measures, i.e. in duality with bounded continuous functions $ C_b(\S) $. On the other hand, for $ \mu\in \Meas(\S) $ we denote by $ \norm{\mu} $ the total variation norm and $ \norm[p]{\mu}= \norm{(1+|\cdot|^p)\mu} $ for $ p> 0 $.

For some time interval $ I\subset \R_+ $, open or closed, we denote by $ C(I;\Meas(\S)) $ the space of functions continuous into $ \Meas(\S) $ on $ I $. Similarly, also for $ C(I;\Meas_+(\S)), \, C([0,T];\Meas_{p,+}(\S)) $. In addition, we write $ L^\infty(I;\Meas_+(\S)) $ for functions bounded into $ \Meas_+(\S) $. Let us mention that for $ \mu\in L^\infty(I;\Meas_+(\S)) $ this is equivalent to $ \sup_{t\in I}\norm{\mu(t)} $ due to $ \mu(t)\geq0 $. Moreover, we write $ L_{\loc}^\infty $ instead of $ L^\infty $ when boundedness holds on compact subsets of $ I $.

In the following we define the Laplace transform of measures. As is necessary for our study we introduce a vector-valued, desingularized variant of it. More precisely, for $ \zeta\in \R_+^2 $ and a measure $ \mu\in \Meas_{1,+}(\R^2_+) $ we define
\begin{align*}
	\hat{\mu}: \R_+^2\to \R^2 : \; \hat{\mu}(\zeta) = \int_{\R^2_+} z\left( 1-e^{-z\cdot \zeta} \right) \, \mu(dz).
\end{align*}
The desingularized variant allows for singularities at the origin $ z=0 $. Let us mention that by the Weierstrass approximation one can show that if $ \hat{\mu}_1(\zeta)=\hat{\mu}_2(\zeta) $ for all $ \zeta\in \R_{+}^2 $ and $ \mu_1({0})=\mu_2({0})=0 $, then $ \mu_1=\mu_2 $. Hence, the desingularized Laplace transform identifies the measure outside of the origin. 

We will make also use of the desingularized Laplace transform defined on measure on $ \S $. In this case, the Laplace transform identifies the measure completely, since $ 0\not\in \S $.

\section{Main results} \label{sec:main results}

In this paper we will work with measure-valued solutions to equation \eqref{sec1:CoagulationEq}. To this end, we will mostly work with the weak formulation of equation \eqref{sec1:CoagulationEq}. 
The corresponding weak formulation of the coagulation equation \eqref{sec1:CoagulationEq} is given by
\begin{align}\label{eq:sec1:ContWeakForm}
	\int_{\S}\varphi(z) \, f(t,dz) =\int_{\S}\varphi(z) \, f_0(dz) +  \dfrac{1}{2}  \int_0^t\int_{\S}\int_{\S} \sum_{i=1}^3 \alpha_i  \K_i(z,z') \, \Delta_i\varphi(z,z') \, f(s,dz)f(s,dz')\, ds, \quad t\geq 0,
\end{align}
for any $ \varphi\in C_b(\S) $. 
Accordingly, we define,  for all $ \varphi\in C_b(\S) $
\begin{align*}
	\Delta_{i}\varphi(z,z') &= \varphi\left( z+z'+ \xi_i   \right) - \varphi(z)-\varphi(z'), \quad (z,z')\in \S^2, \quad i \in \{ 1,2,3\}. 
\end{align*}
Observe that since we assume that $ (z,z') \in \S^2 $ the function $ \Delta_i\varphi $ is well-defined.

Our main results concerns the well-posedness of the rouleau models as well as the long-time asymptotics. As is typical in kinetic theory, a first step towards the behaviour of solutions is the study of moment equations. In the case of the rouleau models first, second and third order moments provide sufficient information:
\begin{align*}
	\M^1_j(t) = \int_{\S} z_j \, f(t,dz), \quad \M^2_{jk}(t) = \int_{\S} z_jz_k \, f(t,dz), \quad \M^2_{jk\ell}(t) = \int_{\S} z_jz_kz_\ell \, f(t,dz), \quad j, \, k,\, \ell \in \{1,2\}.
\end{align*}
In fact, the behaviour is lead by the asymptotics of the second and third order moments. Due to the quadratic nature of the coagulation kernel the matrix $\M^2(t)$ of second moments satisfy a matrix Riccati differential equation of the form
\begin{align}\label{eq:sec1:RiccatiEq}
	\dfrac{d}{dt}\M^2 = \M^2K\M^2 + \M^2A+A^\top\M^2+B, \quad K=\sum_{i =1}^3\alpha_iK_i,
\end{align}
for some matrices $ A=A(t),\,  B=B(t)\in \R^{2\times2} $. In fact, $ A,\, B $ depend on $ \M^1(t) $. As is well-known, the equation \eqref{eq:sec1:RiccatiEq} can lead to finite time blow-ups. This is in accordance with the occurrence of gelation for coagulation models with quadratic kernels, as the rouleau models studied here. Let us therefore define the blow-up time 
\begin{align}\label{eq:sec1:BlowUpTime}
	T_{\alpha,*}(f_0) := \sup \left\lbrace T\geq0 \, : \, \sup_{t\in[0,T]} |\M^2(t)|<\infty \right\rbrace \in [0,\infty].
\end{align}

Our first main result concerns the well-posedness of the rouleau models. 
\begin{thm}\label{thm:IntrodWellPos}
	Let $ \alpha \in \R_+^3 $, $\alpha \neq 0$, $ p\in \N $, $ p\geq3 $. Consider $ f_0\in \Meas_{p,+}(\S) $ and $ T_*=T_{\alpha,*}(f_0) $ given in \eqref{eq:sec1:BlowUpTime}. Then, there exists a unique weak solution $ f\in C^1([0,T_*); \Meas_{+}(\S))\cap L^\infty_{\loc}([0,T_*);\Meas_{p,+}(\S))) $ to \eqref{eq:sec1:ContWeakForm}.
\end{thm}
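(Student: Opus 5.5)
The plan is to build the solution by truncating the kernels, deriving moment bounds that do not depend on the truncation, and passing to the limit. The key structural fact is that the second moment matrix obeys the Riccati equation \eqref{eq:sec1:RiccatiEq} exactly. Hence on every compact interval $[0,T]\subset[0,T_*)$ the second moment is bounded by a quantity depending only on $f_0$, and this bound also controls the moments of order $p\ge 3$.

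\textbf{Step 1: truncated problem.} For $R>0$ replace $\K_i$ by $\K_i^R(z,z')=\K_i(z,z')\chi_R(z)\chi_R(z')$, with $\chi_R$ a continuous cutoff equal to $1$ on $\{|z|\le R\}$. The truncated operator is globally Lipschitz on $\Meas(\S)$ in total variation, locally in the norm. A Picard/Banach fixed point, together with positivity (write the loss term as $-f\,L^R f$ and use an exponential integrating factor), gives a unique global solution $f^R\in C^1([0,\infty);\Meas_+(\S))$.

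\textbf{Step 2: moment bounds uniform in $R$.} We test with $\varphi(z)=z_j$. Since $|\Delta_i\varphi|\le|\xi_i|$, this gives $\frac{d}{dt}\M^1\lesssim \M^1\M^1$-type growth; more precisely, $\M^1$ is dominated via $\M^2$ and $\M^0$. Next we test with $z_jz_k$. Because $\Delta_i(z\otimes z)=z\otimes z'+z'\otimes z+\text{lower order}$ and the truncation only removes mass, the inequality version of \eqref{eq:sec1:RiccatiEq} holds. A comparison principle for matrix Riccati equations (or simply for the trace $\mathrm{tr}\,\M^2$, which is positive) shows that $\M^{2,R}(t)\le \M^2(t)$, where $\M^2$ solves the untruncated Riccati system driven by the untruncated first moments. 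This is bounded on $[0,T]$ for $T<T_*$. For $p\ge3$ we expand $\Delta_i|z|^p$ and bound the gain term by $C\,|z|^{p-1}|z'|^{2}+C|z|^{2}|z'|^{p-1}+\dots$. Multiplied by the quadratic kernel, the worst term is $|z|^p|z'|^{2}$. This gives the linear Gronwall inequality $\frac{d}{dt}\M^{p}\le C(1+\M^2)\M^p+C(\M^2,\M^{p-1})$, and by induction on $p$ we get $\sup_R\sup_{[0,T]}\M^{p,R}<\infty$.

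\textbf{Step 3: compactness and limit.} With uniform bounds on moments of order $p\ge3$ on $[0,T]$, the family $f^R$ is tight and equicontinuous in time in the weak topology. Equicontinuity follows from \eqref{eq:sec1:ContWeakForm}, since the integrand is bounded by $\M^2$. By Arzel\`a--Ascoli we extract a limit $f$. Passing to the limit in the nonlinear term with $\varphi\in C_b$ requires handling the growth $\K_i\sim|z||z'|$. This is done by uniform integrability, because third moments are bounded, so the tails $\int_{|z|>M}|z|\,f^R$ are $O(M^{-2})$. A diagonal argument over $T\uparrow T_*$ yields $f\in L^\infty_{\loc}([0,T_*);\Meas_{p,+})$. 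Then $C^1$ regularity into $\Meas_+$ with the total variation norm follows from the strong form, since $\CO(f,f)$ is continuous in $t$ given the moment bounds.

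\textbf{Step 4: uniqueness.} This is the main obstacle, since the kernel is unbounded. We take two solutions $f,g$ with bounded moments of order $p\ge3$ on $[0,T]$ and estimate the weighted norm $\|(1+|z|)(f-g)\|$ via a sign test function $\varphi=(1+|z|)\,\mathrm{sgn}(f-g)$, in the style of Norris / Escobedo--Mischler. The key inequality is $|\Delta_i\varphi|\,\K_i\le C(1+|z|)(1+|z'|)^2$ after a favourable cancellation of the loss term against $-(1+|z|)\mathrm{sgn}$. The third-moment bounds then give $\frac{d}{dt}\|(1+|z|)(f-g)\|\le C\|(1+|z|)(f-g)\|$, and Gronwall closes the argument. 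The point to check is that, after this cancellation, the weight $(1+|z|)$ really only produces a factor $(1+|z'|)^2$ integrated against $f+g$. This is where $p\ge3$ is needed. If this fails, the fallback is to use weight $1+|z|^2$ with moments of order $4$, or a Laplace-transform uniqueness argument.
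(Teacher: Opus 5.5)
\textbf{The gap is in Step 2.} There you claim bounds on $\M^{2,R}$ that are uniform in $R$ on every $[0,T]$ with $T<T_*$, by comparing with the Riccati solution. The asserted one-sided inequality does not hold, because $\Delta_i(z\otimes z)$ has no sign. For example, $\Delta_1(x^2)(z,z')=2(x-2)(x'-2)-4$, which equals $-4$ whenever $x=2$. If $f^R$ charges points $(2,y)$ with $y>R$ (as it does whenever $f_0$ does), cutting the kernel there discards negative contributions. Then $\frac{d}{dt}\M^{2,R}_{11}$ can exceed the Riccati right-hand side evaluated at $(\M^{1,R},\M^{2,R})$. The Loewner order does not rescue this either, since the parts $u\otimes v+v\otimes u$ are indefinite.

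Even granting an inequality, you would still need a comparison theorem for a matrix Riccati system whose linear and constant parts carry the negative entries of $\xi_i$, and whose coefficients involve $\M^{1,R}$ rather than $\M^1$. You do not supply one. Your trace fallback is valid, since $\mathrm{tr}\,\Delta_i(z\otimes z)=2(z+\xi_i)\cdot(z'+\xi_i)-\|\xi_i\|^2>0$ on $\S\times\S$. But it only gives $\frac{d}{dt}\mathrm{tr}\,\M^{2,R}\le C(1+\mathrm{tr}\,\M^{2,R})^2$, whose comparison solution blows up at some $T_0$ depending on $\|f_0\|_2$, in general strictly before $T_*$. So Step 2 yields uniform bounds only on a short interval, and the diagonal argument up to $T_*$ in Step 3 has nothing to rest on.

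\textbf{The missing step is a continuation argument, which is what the paper does.} Pass to the limit only on $[0,T_0]$. The limit has finite third moments, so it satisfies the exact, closed moment system of Proposition \ref{pro:MomentEquations}. Hence its second moments coincide with the Riccati solution and stay bounded on every $[0,T]$ with $T<T_*$. Your $p$-th moment Gronwall then keeps $\|f(t)\|_p$ bounded as well. The short-time construction can therefore be restarted from $f(t_0)$ with a step length bounded below on $[0,T]$, and uniqueness glues the pieces together.

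Alternatively, you can keep your structure and replace the comparison by a bootstrap. The truncation error in the equations for $\M^{1,R}$ and $\M^{2,R}$ is $O(R^{-1}\|f^R\|_3\|f^R\|_2)$. Third moments stay bounded by Gronwall as long as $\M^{2,R}$ remains within distance $1$ of the ODE solution. Stability of the ODE on $[0,T]$ then closes the argument for $R\ge R_0(T)$.

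\textbf{Step 4 is sound but takes a different route from the paper.} The paper proves uniqueness by showing that the desingularized Laplace transform solves a quasilinear first-order system, and then using uniqueness along characteristics. Your Norris-type weighted total-variation estimate also works. The reason is that $1+|z|$ is subadditive under $(z,z')\mapsto z+z'+\xi_i$, because $\xi_{i,1}+\xi_{i,2}<0$. The resulting Gronwall constant involves only second moments of $f+g$.
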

\begin{rem}
	The existence of a time dependent solution to equation \eqref{eq:sec1:ContWeakForm} is proven for  $ t < T_{\alpha,*}(f_0) $. We do not assert that this time is in fact the maximal time of existence and we will not discuss a possible extension beyond $ T_{\alpha,*} $. The above theorem just ensures that the solution exists as long as the second moments do not blow-up. Nevertheless, as is well-known in the theory of coagulation models, this time corresponds to the case of gelation, i.e.\ formation of coagulants of infinity size in finite time. Let us mention that for certain  coagulation models the extension beyond the gelation time was proved (see \cite{leyvraz1981singularities,norris2000cluster}). 
\end{rem}
Our next result concerns the long-time asymptotics of solutions at the gelation time. Due to the quadratic nature of the coagulation kernel gelation occurs for most initial conditions. We therefore focus in the situation of a finite time blow-up, that is $ T_{*}(f_0)<\infty $. We then identify the asymptotic behaviour of the solution close to $ T_{*}(f_0) $. However there are specific situations in which $T_* =\infty $, as in the following remark. 

\begin{rem} \label{rem:no gel}
    Assume that $ f_0 (z ) = \delta_{ y = 2} (z)  \mu (dx ) $.
    In this case blow-up does not occur for the coagulation equation \eqref{sec1:CoagulationEq} when $ \alpha =( 0 , 0, \alpha_3 ) $ with $\alpha_3 >0 $.
    Indeed, due to the form of the coagulation reaction $R_{3} $ we will have that $ f(t, z) = \delta_{ y = 2 }(z) \mu_t ( dx ) $ for every $ t\geq 0 $. 
    Then we deduce that $\mu_t $ satisfies the classical Smoluchowski's coagulation equation with constant kernel $ K = 4$, i.e. the following equation
	 \[
 		\partial_t \mu_t (y)  = 2 \int_0^y   \mu_t(t,  y' ) \mu_t (y-y' ) dy   - 4 \mu_t (y)   \int_0^\infty   \mu_t (y)  dy. 
 	\]   
 	It is known that gelation in this case does not take place. In fact, as we will see later on, see Proposition \ref{pro:BlowUp}, the above situation is the only instance for which gelation does not occur.
\end{rem}

To study rigorously the asymptotic behaviour of the solution close to $ T_{*}(f_0) $ a detailed analysis of the asymptotic behaviour of the third moment is necessary 
\begin{align*}
	\M^3_{jk\ell}(t) = \int_{\S} z_jz_kz_\ell \, f(t,dz), \quad j,\, k,\, \ell \in \{1,2\}.
\end{align*}
It is convenient to consider the whole set of third moments as a 3-tensor over $ \R^2 $, which is symmetric in all of its components, i.e. $ \M^3\in (\R^2)^{\otimes 3} $. We refer to the notation section (Section \ref{sec:noation}) for precise definitions. The equation for the third order moments is in fact linear in $ \M^3 $ and is lead by the asymptotics of the second order moments. The study of the third moments allows to determine the tail for large clusters of the distribution. 

The main theorem that we prove in this paper deals with the long-time behaviour of the solution to equation \eqref{sec1:CoagulationEq}. In particular we prove that the solution $f$,  written in the self-similar variables that we will obtain in Section \ref{subsec:scaling} with a heuristic argument, localizes along the direction $w_\theta$ that depends on the initial datum $f_0$. Moreover, we prove that the solution $f$ approaches a self-similar solution along the direction $\omega_\theta $. This self-similar solution belongs to the one-parameter family of continuous distributions (see \cite{menon2004approach}), of the form
\begin{align} \label{eq:sec1:McLeod}
	F_s( r)= \dfrac{1}{ \sqrt{2 \pi K_0} }  r^{-5/2} e^{-r/(2K_0)},
\end{align}
for $K_0>0$. 
\begin{thm}\label{thm:IntrodLocaliSelfSim}
	Let $ \alpha \in \R_+^3 $ be such that $\alpha \neq 0 $. Let  $ f_0\in \Meas_{4,+}(\S) $ and $ T_*=T_{*}(f_0) $ be given as in \eqref{eq:sec1:BlowUpTime}. Let $ f\in C([0,T_*); \Meas_{+,3}(\S))\cap L^\infty_{\loc}([0,T_*);\Meas_{4,+}(\S))) $ be the solution to \eqref{eq:sec1:ContWeakForm} as in Theorem \ref{thm:IntrodWellPos}. Furthermore, assume that $ T_* <\infty $ is finite and define
	\begin{align*}
		F(\tau,\eta ) = (T_*-t(\tau))^{-7}f\left(t(\tau),(T_*-t(\tau))^{-2}\eta \right), \quad t(\tau) = T_*\left( 1-e^{-\tau}\right).
	\end{align*}
    Then, we have the following statements. 
	\begin{enumerate}[(i)]
		\item (Asymptotics of 2nd and 3rd moments) There is $ \theta\in \R_{+}^2 $ and a constant $ C>0 $ such that the matrix $ \M^2(\tau) $ of second moments of $ F(\tau) $ satisfies for all $ \tau\geq0 $
		\begin{align} \label{localization second moment}
			\norm{\M^2(\tau) - \theta\otimes\theta} \leq Ce^{-\tau}.
		\end{align}
		Furthermore, we have for the set of third moments $ \M^3(\tau) $ of $ F(\tau) $
		\begin{align} \label{localization third moment}
			\norm{\M^3(\tau) - c_0 \theta\otimes\theta\otimes\theta}\leq Ce^{-\tau}
		\end{align}
        where $c_0 >0 $. 
	
		Finally, the fourth moments $ \M^4(\tau) $ of $ F(\tau) $ satisfy $ \sup_{ \tau \in [0,\infty )}\norm{\M^4(\tau)}<\infty $.
		
		\item (Localization) the measure $F$ localizes along the line $ \{\eta=\lambda \theta, \lambda\geq0\} $, where $ \theta $ is given in (i). More precisely, there is a constant $ C>0 $ such that for any $ \tau\geq0 $
		\begin{align*}
			\int_{\R_+^2} |\eta|^p \norm{\dfrac{\eta }{|\eta |}-\dfrac{\theta}{|\theta|}}^2\, F(\tau,d\eta ) \leq Ce^{-\tau}, \quad p\in \{2,3\}.
		\end{align*}

		\item (Self-similar asymptotics) the distribution $ F $ converges to a self-similar solution as $ \tau\to \infty $. More precisely, we have
		\begin{align*}
			\lim_{\tau \to \infty}\dfrac{|\eta|^2 F(\tau,d\eta)}{Z(\tau)} = |\eta|F_s(|\eta|)\delta \left( \dfrac{ \eta}{|\eta | } - \dfrac{\theta}{|\theta|} \right) \, d\eta
		\end{align*}
		in the sense of measures. Here, $ Z(\tau) = \int_{\R_+^2} |\eta |^2 F(\tau,d\eta) $ and $ F_s $ is given in \eqref{eq:sec1:McLeod} with the constant $ K_0=c_0|\theta| $ as in (i).
	\end{enumerate}
\end{thm}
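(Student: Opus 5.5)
Three steps, all driven by moment equations: tensorization of the second and third moments, localization as a consequence of that tensorization, and a Laplace-transform argument along the limiting line.

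\emph{Step 1: moments.} Testing \eqref{eq:sec1:ContWeakForm} with $\varphi(z)=z$, $z\otimes z$ and $z^{\otimes 3}$ gives the moment equations. Because $\K_i(z,z')=z^\top K_i z'$, the identity $\Delta_i(z\otimes z)=z\otimes z'+z'\otimes z+(\text{terms linear in }\xi_i)$ shows that $\M^1$ satisfies a linear equation with bounded coefficients up to $T_*$. The matrix $\M^2$ satisfies the Riccati equation \eqref{eq:sec1:RiccatiEq}. I would linearize it in the standard way: write $\M^2=-QP^{-1}$, where $(P,Q)$ solves a linear $4\times4$ system. At the blow-up time $T_*$, $P(T_*)$ has rank one, since the eigenvalue of $P$ that vanishes first is simple. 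This yields
\begin{align*}
\M^2(t)=\frac{\theta\otimes\theta}{T_*-t}+O(1),
\end{align*}
with $\theta$ determined by the kernel of $P(T_*)$, hence by $f_0$ and $\alpha$. In the self-similar variables the time-derivative scaling is $(T_*-t)^{-1}$ and $d\tau=dt/(T_*-t)$, so this becomes $\norm{\M^2(\tau)-\theta\otimes\theta}\le Ce^{-\tau}$.

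The third moment obeys a linear equation of the form
\begin{align*}
\dot\M^3=3\P_3\bigl(\M^3 K\M^2\bigr)+\text{lower order}.
\end{align*}
Inserting the asymptotics of $\M^2$ and solving along $\theta$ gives growth $(T_*-t)^{-3}$ with leading tensor $c_0\theta^{\otimes3}$; the components transverse to $\theta$ are of lower order. The same argument bounds $\M^4$ by $(T_*-t)^{-5}$, which is uniform boundedness in $\tau$.

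\emph{Step 2: localization.} This follows algebraically from Step 1. Expand
\begin{align*}
|\eta|^2\norm{\frac{\eta}{|\eta|}-\frac{\theta}{|\theta|}}^2
\end{align*}
as a quadratic form in $\eta$; here $|\cdot|$ is the $1$-norm, which is linear on $\R_+^2$. Integrated against $F$, it equals a combination of entries of $\M^2$. For $\M^2=\theta\otimes\theta$ exactly, this combination is
\begin{align*}
\norm{\theta}^2-2\frac{|\theta|\,\norm{\theta}^2}{|\theta|}+\frac{|\theta|^2\norm{\theta}^2}{|\theta|^2}=0,
\end{align*}
using $|\eta|=\eta\cdot(1,1)$. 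The $O(e^{-\tau})$ error bound on $\M^2$ then gives the claim for $p=2$. The case $p=3$ works the same way, with $|\eta|^3$ written as a cubic form in $\M^3$ and the tensorization \eqref{localization third moment}.

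\emph{Step 3: self-similarity, the main obstacle.} I would follow Menon--Pego and use the desingularized Laplace transform $\hat F(\tau,\zeta)$. Because of the localization in Step 2, the measure $|\eta|^2F$ is asymptotically concentrated on the ray through $\theta$. Testing with $\zeta=s\,\omega$ and projecting the equation for $\hat F$ onto $\theta$ should therefore reduce it, up to $O(e^{-\tau})$ errors controlled by the fourth moments, to the one-dimensional inviscid Burgers-type equation of the product kernel. The coefficient of that equation is fixed by $\theta^\top K\theta$, and the normalization of $\theta$ coming from the Riccati limit should match it.

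The difficulty is that the error terms must be controlled uniformly on compacts in $s$, and the transverse mass must not contribute at the Laplace level. For this I would bound $|\hat F(\tau,\zeta)-\hat F(\tau,(\zeta\cdot\omega_\theta)\omega_\theta)|$ by the localization integral of Step 2. The asymptotics of $\M^2$ and $\M^3$ fix the first two derivatives of the limit at $s=0$, and these select $F_s$ with $K_0=c_0|\theta|$. The stability argument of Menon--Pego for the product kernel with finite third moment then gives convergence of $|\eta|^2F/Z$. Continuity of the Laplace transform on this class converts that into weak convergence, with the angular delta coming from Step 2.
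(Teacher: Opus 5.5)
Your Step 2 is the paper's argument, and the plan of Steps 1 and 3 follows the paper. However, the two claims in Step 1 that carry the content of (i) are asserted, not proved.

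\textbf{Rank one and simple pole for $\M^2$.} The sentence ``$P(T_*)$ has rank one, since the eigenvalue of $P$ that vanishes first is simple'' is exactly what must be shown. Matrix Riccati equations can blow up with a full-rank residue: $\dot M=M^2$, $M(0)=I$ gives $M=I/(1-t)$. For $\alpha_2=0$, $\alpha_1,\alpha_3>0$ the matrix $K$ is diagonal and positive definite, so nothing generic rules this out. The paper proceeds in three steps.
\begin{enumerate}
\item It shows the pole is simple.
\item From the $(T_*-t)^{-2}$ terms it reads off $\M_*=\M_*K\M_*$ for the residue. This is also the source of $\theta^\top K\theta=1$. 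Hence $\M_*$ is either $\theta\otimes\theta$ or $K^{-1}$.
\item It excludes $K^{-1}$ using the coagulation structure. Nonnegative entries force $\alpha_2=0$. The $R_1$ channel then gives $\frac{d}{dt}\M^2_{12}\gtrsim\M^2_{11}\M^2_{12}$ up to lower-order terms, so $\M^2_{12}$ blows up together with $\M^2_{11}$. This contradicts a diagonal residue.
\end{enumerate}

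\textbf{Positivity of $c_0$.} Inserting the $\M^2$ asymptotics into the linear third-moment equation only gives $(T_*-t)^3\M^3\to c_0\,\theta^{\otimes 3}$ with $c_0\geq 0$. Nothing prevents the solution from lying in the decaying modes. Interpolation, $\|\M^2\|\lesssim\|\M^1\|^{1/2}\|\M^3\|^{1/2}$, only yields $\|\M^3\|\gtrsim (T_*-t)^{-2}$. The paper rules out $c_0=0$ as follows. If $c_0=0$, then $(T_*-t)^2\M^3\to c_1\Xi$ with $c_1\neq 0$, where $\Xi$ is the eigenmode of the next eigenvalue. Contracting $\Xi$ with $K\theta$ gives an indefinite matrix. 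But $\int z\otimes z\,(z^\top K\theta)\,f(t,dz)$ is positive semidefinite, a contradiction. Without $c_0>0$ you get $K_0=0$ and (iii) degenerates.

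A smaller point: $\M^1$ obeys the quadratic system $\frac{d}{dt}\M^1=\sum_i\frac{\alpha_i}{2}\big((\M^1)^\top K_i\M^1\big)\xi_i$, not a linear one. It stays bounded because $\xi_{i,1}+\xi_{i,2}\leq 0$.

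\textbf{Step 3.} Your reduction is sound in spirit. Up to the factor $|\theta|$, $\theta\cdot\hat F(\tau,s\omega_\theta)$ is the one-dimensional desingularized transform of the pushforward of $F$ under the linear functional $\eta\mapsto\omega_\theta\cdot\eta$. The paper's $Z(\tau)\hat g(\tau,\rho)=(1,1)\cdot\hat F(\tau,\rho(1,1))$ does the same with $r=|\eta|=(1,1)\cdot\eta$.
\begin{itemize}
\item Derive the equation by testing the weak form directly, not by restricting the vector equation, which involves $D_\zeta\hat F$ in the transverse direction $K\hat F$.
\item The non-closure terms are then the shifts $\delta_i(\tau)$, the normalization $Z$, and $\K_i(\omega,\omega')-\K_i(\omega_\theta,\omega_\theta)$.
\item The last term is first order in $\|\omega-\omega_\theta\|$, so Cauchy--Schwarz against Step 2 gives a rate $e^{-\tau/2}$, not $e^{-\tau}$. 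Fourth moments are used only for the $C^3_\rho$ bounds on $\hat g$.
\end{itemize}

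What is missing is the convergence itself, which you hand to ``the stability argument of Menon--Pego''. Their analysis concerns the unforced equation. Here you must show that the forcing does not accumulate along characteristics, which expand like $e^{\tau}$ to $e^{2\tau}$.

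The paper restarts the characteristics at a large time $T$, where $\partial_\rho^2\hat g(T,0)\approx -K_0$ and $\Rem=O(\rho e^{-T/2})$. It follows them only for time $T/8$ and inverts them perturbatively around $Q_0(\rho)=\rho+\frac{K_0}{2}\rho^2$. This yields $\sup_{\rho\leq M}|\partial_\rho\hat g(\tau,\rho)-(1+2K_0\rho)^{-1/2}|\leq CM^{\kappa}e^{-\tau/10}$.

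An alternative would be a compactness argument. Show that every limit trajectory is an eternal unforced solution with $\partial_\rho^2\hat g(\cdot,0)\equiv -K_0$ and bounded third $\rho$-derivative. Backward characteristics then identify it as the self-similar solution. Some argument of one of these kinds is needed to complete Step 3.
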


\begin{rem}
	Let us give some remarks on the preceding theorem.
	\begin{enumerate}[(i)]
		\item The localization line is defined by the vector $ \theta $. As it turns out it satisfies $ \sum_i\alpha_i\theta^\top K_i\theta=1 $, see Proposition \ref{pro:SecondMomentsLocalization}. However, this does not identify $ \theta $ and solving the second order moment equations is necessary to do so. As a consequence $ \theta $ depends on the second moments of the initial datum. This is in contrast with the result in \cite{ferreira2024asymptotic} for non-gelling kernels where the direction of the localization depends on the initial mass distribution 
        \[ 
        \int_{\mathbb R_+^n} z f_0(dz). 
        \]
        The reason why for the equations studied in this paper the vector $\theta $ depends only on the second moment of the initial datum is that gelation occurs. 
		
		\item Observe that the measure 
		\begin{align*}
			|\eta|F_s(|\eta|)\delta \left( \dfrac{ \eta}{|\eta | } - \dfrac{\theta}{|\theta|} \right) \, d\eta
		\end{align*}
        in point (iii) of Theorem \ref{thm:IntrodLocaliSelfSim}
		is a probability measure. This is due to the change of variables $ \eta\mapsto (|\eta|,\eta/|\eta|)=(r,\omega) $ with Jacobian $ r $, i.e. $ d\eta = rdrd\omega $, and $ \int r^2F_s(r)dr=1 $.
		
		\item Observe that the self-similar asymptotics is formulated for the measure $ |\eta|^2 F(\tau,d\eta)/Z(\tau) $. This is the correct quantity for the behaviour close to the gelation times, since in self-similar variables the second moments converge $ \theta\otimes \theta $, i.e. do not blow up. On the other hand, the measures $ F(\tau,d\eta) $ as well as $ |\eta|F(\tau,d\eta) $ have infinite mass as $ \tau\to \infty $. This is due to the formation of a singularity for clusters of size $ \eta=0 $. Observe that this can also be seen from the profile $ F_s(r) $, which diverges for $ r\to 0 $ like $ r^{-5/2} $. Since via the self-similar change of variables we zoom in around the region of clusters whose mass tends to infinity as $t \to T_* $, the singularity in the self-similar profile corresponds to an excess mass of particles remaining of finite size at the time of gelation.
	\end{enumerate}
\end{rem}

\section{Heuristic arguments} \label{subsec:scaling}
Before going into the details of the proofs of the results stated in Section \ref{sec:main results} we provide some heuristic arguments that aim at motivating these results. We think that these arguments could help the reader to understand the main statements proved in this paper, but they do not substitute the rigorous proofs that can be found in the next sections. 

We start by presenting a heuristic argument for the localization phenomena and later for the choice of the self-similar change of variables that we use in this paper. 

Localization is a phenomenon that occurs when infinitely many collisions take place. This could happen in finite or infinite time depending on whether or not gelation occurs. In this paper, the latter is the case and localization takes place as $t $ approaches the gelation time $T_*$. 

To illustrate the localization phenomenon, let us consider  $ f_0(dz) = \delta_{(1,\beta_1)}+\delta_{(1,\beta_2)} $ for $0 < \beta_1 < \beta_2 $. Denoting $p_{\beta}:= \{  z  \in \R^2_+ : z_2/z_1=\beta \} $, this measure is supported on $ p_{\beta_1} \cup p_{\beta_2} $. We now look at the possible outcomes of coagulation events $ (z,z')\mapsto z+z'$, so we assume for the moment $\xi_i =0$.

If one coagulation event occurs, then the only new cluster is $ (2,\beta_1+\beta_2)\in p_{(\beta_1+\beta_2)/2} $. After another coagulation event also clusters of the form $ (3,2\beta_1+\beta_2)\in p_{(2\beta_1+\beta_3)/3} $ and $ (3,\beta_1+2\beta_2)\in p_{(\beta_1+2\beta_3)/3} $ can be produced. After a third coagulation event we have in addition also the clusters
\begin{align*}
	(4,3\beta_1+\beta_2) &\in p_{\frac{3\beta_1+\beta_2}{4}}, \quad (4,\beta_1+3\beta_2) \in p_{\frac{\beta_1+3\beta_2}{4}}, \quad (5,3\beta_1+2\beta_2) \in p_{\frac{3\beta_1+2\beta_2}{5}},
	\\
	(5,2\beta_1+3\beta_2) &\in p_{\frac{2\beta_1+3\beta_2}{5}}, \quad (6,3\beta_1+3\beta_2) \in p_{\frac{\beta_1+\beta_2}{2}}.
\end{align*}
See Figure \ref{fig4} for an illustration of the possible lines to which a particle can belong. We observe that as the size (or here equivalent $ z_1 $) of the cluster increases the cone of possible directions becomes narrower. In particular, when looking at large clusters (as is the case in self-similar variables) the distribution will localize. 

\begin{figure}
	\centering
	\begin{subfigure}{.5\textwidth}
		\centering
		\includegraphics[width=.49\linewidth]{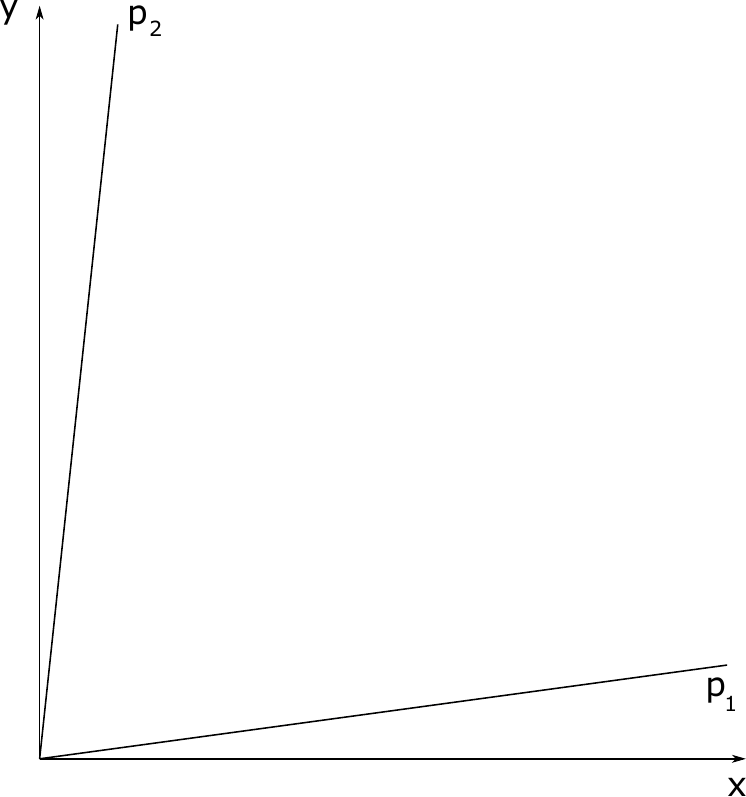}
	\end{subfigure}%
	\begin{subfigure}{.5\textwidth}
		\centering
		\includegraphics[width=.49\linewidth]{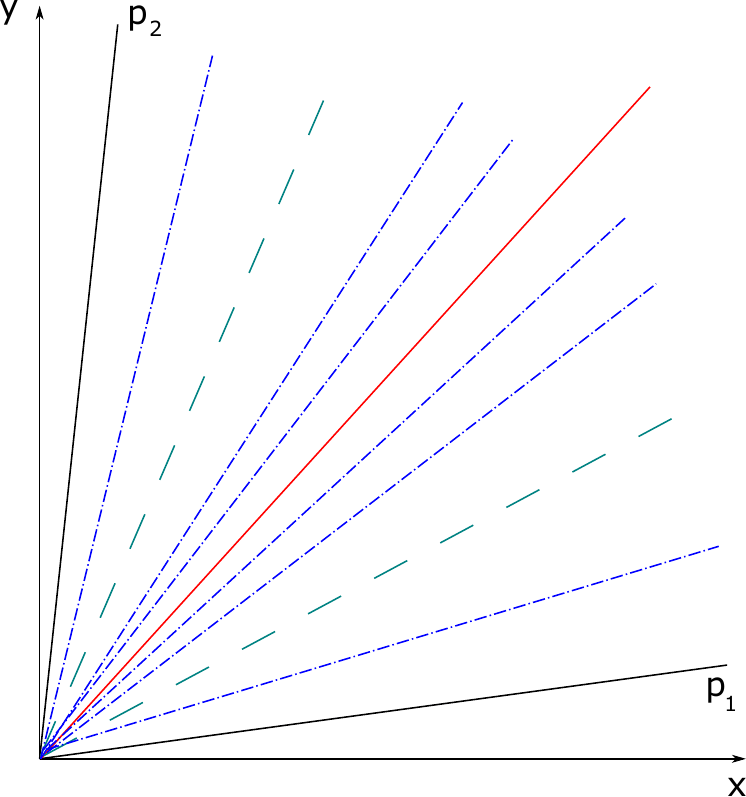}
	\end{subfigure}
	\caption{Left: Initial configuration. the particles are localized in the directions $p_1 = p_{\beta_1} $ and $p_2 = p_{\beta_2 } $. All the clusters will remain between the lines $p_1 $ and $p_2$ for all times. Right: possible directions after three coagulation events. In red (solid line) after one coagulation event. In green (dashed lines) after two coagulation events. In blue (dotted lines) after three coagulation events.}
	\label{fig4}
\end{figure}

The mechanism of this phenomenon is due to the coagulation rule $ (z,z')\mapsto z+z' $ which leads to the effect that the coagulant of two clusters becomes bigger and relative to this its direction become closer. As a result particles on the boundary of the cone of all clusters always move closer to the centre of this cone by a coagulation process. Thus, the bigger the particles the more their directions align. In particular, in self-similar variables (which centres around large clusters) the support of the distribution of clusters localizes.

Let us mention that this heuristics also applies to the coagulation rules $ \{R_i\}_{i=1}^3 $, i.e. $ (z,z')\mapsto z+z'+\xi_i $, with $ \xi_i $ as in \eqref{eq:sec1:DefXi}. Indeed, as the size of the clusters $ z,\, z' $ increases the effect of the term $ \xi_i $ becomes smaller. Furthermore, observe that the above heuristics assumes that all coagulation processes are possible. If certain coagulation events are not possible (as is the case when the coagulation kernel vanishes), localization might not occur.

We now motivate the self-similar change of variables introduced in \eqref{intro:selfsim}. 
To this end we assume that localization occurs and we study the natural self-similar scaling that this assumption suggests.

Let us assume that  $ x \sim s(t) $ as $ t \to T_* $ and that $s(t) \to \infty $ as $ t \to T_* $. 
A change of variable that keeps the coagulation equation invariant must satisfy 
\begin{equation} \label{mathc between coag and der} 
\partial_t [f] = [ \K (f,f) ] 
\end{equation}
where we are denoting with $ [f ] $ the dimension of $f $ and with $[ \K (f,f) ] $ the dimension of $ \K (f,f) $. 
Now notice that for large times, i.e. as $t \to T_*$, it holds that
\[
[\K (f,f) ] \sim [f]^2 [x]^4. 
\]
This follows by the homogeneity of the kernel and by the fact that $s(t) \to \infty $ as $t \to T_* $. Using \eqref{mathc between coag and der} deduce that
\begin{equation} \label{scaling of f}
	[f] \sim \frac{1}{ (t- T_*) s(t)^4 } \text{ as } t \to T_*. 
\end{equation}
The fact that gelation occurs means that we have an escape of mass towards infinity at time $T_* $. As for the classical coagulation equation, this allows us to infer the precise form of the scaling $s(t)$. The following argument is an adaptation of the dimensional analysis performed in \cite{delacour2022mathematical} in order to find the scaling for the self-similar solution to the coagulation equation with product kernel. 

We can define the flux of mass $J_R [f] $ at size $R $ in an analogous way as for the classical coagulation equation. The flux can be obtained multiplying equation \eqref{sec1:CoagulationEq} against the test function $\varphi (z ) = \ind_{ \{ |z| > R \}  } (z )   |z| $ and integrating. We deduce that 
\[
\dfrac{d}{dt} \int_{ \{ |z|> R \} } |z| f(t, dz) = J_R^1 [f] ( t)  + J_R^2 [f](t)
\]
where with $ \zeta_i=-\xi_{i, 1}-\xi_{i, 2}\in \{1,2\} $
\begin{align*}
	J^1_R [f](t) &= \sum_{i =1}^3 \alpha_i  \int_{\{|z|+|z'|\geq R+\zeta_i, \, |z|\leq R\}} z^\top K_i z'\, |z| f(t, dz) f(t, dz'),
	\\
	J^2_R [f](t) &= -\sum_{ i =1}^3 \zeta_i\alpha_i  \int_{\{|z|+|z'|\geq R+\zeta_i, \, |z|\leq R\}} z^\top K_i z'\, f(t, dz) f(t, dz').
\end{align*}
Assuming that gelation occurs at time $ t=T_* $ we necessarily have
\begin{align} \label{flux at infinity}
	0 < \lim_{ R \to \infty }  \left( J^1_R [f](T_*)+J^2_R [f](T_*) \right)  < \infty. 
\end{align}
Furthermore, since localization occurs for $t \to T_*$ we have $f (t, z ) \sim F(t, |z|) $ as $ t \to T_* $. Hence, in order to ensure \eqref{flux at infinity}, $F(t, |z|) $ has to decay slow enough, in particular polynomially, i.e. $F(t, |z|)  \sim |z|^{-\kappa} $ when $ |z|\to \infty $ for some $ \kappa>0 $. We now identify the value of $ \kappa $. To this end, we notice that 
\begin{align*}
	J^1_R [f](T_*) &\sim \sum_{i =1}^3 \alpha_i  \int_{\{|z|+|z'|\geq R+\zeta_i, \, |z|\leq R\}} z^\top K_i z'\, |z|^{1-\kappa}|z'|^{-\kappa} \, dzdz' 
	\\
	&\sim  \int_{\{|z|+|z'|\geq R, \, |z|\leq R\}}|z|^{2-\kappa}|z'|^{1-\kappa}\, dzdz' \sim \int_0^Rdr r^{3-\kappa}\int_{R-r}^\infty dr'(r')^{2-\kappa} \sim R^{7-2\kappa}
\end{align*}
as $ R\to \infty $. Similarly, we obtain $ J^2_R [f](T_*)\sim R^{6-2\kappa} $ as $ R\to \infty $. As a consequence 
\[ 
	J^1_R [f](t)+J^2_R [f](t) \sim R^{ 7 - 2 \kappa } \text{ as } R \to \infty.  
\]
But then \eqref{flux at infinity} requires $ \kappa = - 7/2 $. Therefore using \eqref{scaling of f} we deduce that 
\[ 
[f ] \sim \dfrac{1}{(t- T_*) s(t)^4 } \sim \dfrac{1}{ s(t)^{  7 /2 }}
\]
This implies  $s(t) \sim (t- T_*)^{-2 }$ and $[f ] \sim (t- T_*)^{ - 7 } $. which coincides with \eqref{intro:selfsim}. Notice that this scaling suggests that the second moment of $f $ blows up like $ (T_* - t )^{-1 } $ and the third moment like $ (T_* - t )^{-3 }$ as consistent with Theorem \ref{thm:IntrodLocaliSelfSim} (i).

Notice that applying the change of variables \eqref{intro:selfsim} 
in equation \eqref{sec1:CoagulationEq} yields
\begin{align}\label{eq:self_sim}
	\begin{split}
		\partial_{\tau}F (\tau, \eta  ) &= 7F(\tau, \eta )  + 2\eta \cdot\nabla F (\tau, \eta )  +  \frac{1}{2} \sum_{i=1}^3 \alpha_i \int_{ B_z^i }  \K_i( \eta ', \eta - \eta ' - \delta_i (\tau)   ) F(\tau ,  \eta ' ) F(\tau , \eta - \eta' - \delta_i (\tau )   ) d\eta ' \\
		& - F (\tau , \eta )  \int_{ \S } \sum_{i=1}^3 \alpha_i  K_i(\eta,\eta ') F(\tau ,\eta') d\eta'	
	\end{split}
\end{align}
where $\delta_i (\tau ) \rightarrow 0 $ as $ \tau \to \infty$. 

Let us now set $\delta_i(\tau)=0$ and assume that $ F $ is localized along a line. We then expect $F$ to by a self-similar rescaling of a \emph{one-dimensional} distribution $ f $ which satisfies a \emph{one-dimensional} coagulation equation with product kernel of the form
\begin{align*}
    \partial_t f(t,z)=\dfrac{1}{2}\int_0^z z z'  f(t,  y ) f(t,z -z' ) \, dz'  - f(t, z )  \int_0^\infty  z z' f(t, z' ) \, dz'. 
\end{align*}
As is know from \cite{menon2004approach} solutions to this equation have a self-similar behaviour close to the gelation time. More precisely, in self-similar variables solutions approach the one-parameter family of distributions $F_s$ given by \eqref{McLeod}. This argument suggests the long-time asymptotics formulated in Theorem \ref{thm:IntrodLocaliSelfSim} (iii).


\section{Well-posedness of coagulation models}\label{sec:WellPosed}
In this section we prove well-posedness of the coagulation models. To this end, we first give a definition of weak solutions to the coagulation equations.

\begin{defi}[Weak solutions] \label{def:weak solution}
	Let $ \alpha \in \R_+^3 $ and $ f_0\in \Meas_{1,+}(\S) $. We say that 
	\begin{align*}
		f\in C([0,T);\Meas_{+}(\S)))\cap L^\infty_{\loc}([0,T);\Meas_{1,+}(\S)))
	\end{align*}
	is a weak solution to \eqref{eq:sec1:ContWeakForm} if we have for any $ \varphi\in C_b(\S) $ and all $ t\in [0,T) $ 
	\begin{align*}
		\int_{\S}\varphi(z) \, f(t,dz) =\int_{\S}\varphi(z) \, f_0(dz) +  \dfrac{1}{2}  \sum_{i =1}^3\int_0^t\int_{\S}\int_{\S} \alpha_i\K_i(z,z') \, \Delta_i\varphi(z,z') \, f(s,dz)f(s,dz')\, ds.
	\end{align*}
\end{defi}
\begin{rem}
    Let us mention that the assumption $ f\in L^\infty_{\loc}([0,T);\Meas_{1,+}(\S))) $ ensures that the integral on the right hand side is well-defined.
\end{rem}

\begin{pro}\label{pro:MomentEquations}
	Let $ \alpha \in \R_+^3 $ and $ f\in C^1([0,T];\Meas_{+})\cap L^\infty([0,T];\Meas_{3,+}) $
    be a weak solution to \eqref{eq:sec1:ContWeakForm} for some $ T>0 $. Then, the following holds:
	\begin{enumerate}[(i)]
		\item the function $ t\mapsto \M^1(f(t)) $ belongs to $ C^1([0,T];\R_+^2) $, it satisfies the following system of ODEs 
          \begin{align*}
            \frac{d}{dt} \M^1=  \sum_{i =1}^3\frac{\alpha_i}{2}(\M^1)^\top K_i \M^1 \; \xi_i
        \end{align*}
        and $ \norm{\M^1(f(t))}\leq C\norm{\M^1(f(0))} $ for some $ C>0 $.
		
		\item the function $ t\mapsto \M^2(f(t)) $ belongs to $ C^1([0,T];\R_+^{2\times2}) $ and satisfies the Riccati equation
		\begin{align}\label{eq:sec2:SecMomEq}
			\dfrac{d}{dt}\M^2 = \sum_{i =1}^3\alpha_i(\M^2 + \xi_i \otimes \M^1 ) K_i (\M^2+ \M^1 \otimes \xi_i  )  - \sum_{i =1}^3\frac{\alpha_i}{2} (\xi_i \otimes \M^1 ) K_i  ( \xi_i \otimes \M^1 ).  
		\end{align}
	\end{enumerate}
\end{pro}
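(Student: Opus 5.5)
The plan is to test the weak formulation against the polynomial functions $\varphi(z)=z_j$ and $\varphi(z)=z_jz_k$. These are not in $C_b(\S)$, so the first step is a truncation argument. We take $\varphi_R(z)=\varphi(z)\chi(|z|/R)$ with a smooth cutoff $\chi$, and use that $f\in L^\infty([0,T];\Meas_{3,+})$. Since $\K_i(z,z')\le C|z||z'|$ and $|\Delta_i\varphi_R(z,z')|\le C(1+|z|+|z'|)$ for linear $\varphi$, the integrand is dominated by
\[
C|z||z'|(1+|z|+|z'|)\le C(1+|z|^2)(1+|z'|^2)
\]
uniformly in $R$. For quadratic $\varphi$ we have $|\Delta_i\varphi(z,z')|\le C(1+|z|)(1+|z'|)+C(|z|+|z'|)$, since the $|z|^2$ and $|z'|^2$ terms cancel. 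The integrand is then bounded by $C(1+|z|^2)(1+|z'|^2)$, which is integrable against $f\otimes f$ uniformly in time because third moments are bounded. Dominated convergence passes $R\to\infty$ in the weak identity. The same bounds, together with $f\in C^1([0,T];\Meas_+)$ (or weak continuity plus moment bounds), show that the time integrand is continuous in $s$. Hence $\M^1$ and $\M^2$ are $C^1$ and satisfy the identities with the integrand evaluated at time $t$.

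The second step is the algebra. For $\varphi(z)=z\cdot e_j$ we get $\Delta_i\varphi=\xi_{i,j}$, so
\[
\tfrac{d}{dt}\M^1=\tfrac12\sum_i\alpha_i\xi_i\iint z^\top K_iz'\,ff=\tfrac12\sum_i\alpha_i(\M^1)^\top K_i\M^1\,\xi_i,
\]
using bilinearity of $\K_i$. For $\varphi(z)=z_jz_k$ we compute $\Delta_i\varphi=(z+z'+\xi_i)_j(z+z'+\xi_i)_k-z_jz_k-z'_jz'_k$. This equals $z_jz'_k+z'_jz_k$ plus the terms $\xi_{i,j}(z+z')_k+(z+z')_j\xi_{i,k}$ plus $\xi_{i,j}\xi_{i,k}$. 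We multiply by $\tfrac12 z^\top K_iz'$ and integrate, using the symmetry of $K_i$ and of $f\otimes f$. The first group gives $\M^2K_i\M^2$, since $\iint z_j z'_k\, z^\top K_i z'=(\M^2K_i\M^2)_{jk}$. The second gives $(\xi_i\otimes\M^1)K_i\M^2+\M^2K_i(\M^1\otimes\xi_i)$. The last gives $\tfrac12\xi_i\xi_i^\top (\M^1)^\top K_i\M^1$. Expanding the claimed right-hand side of \eqref{eq:sec2:SecMomEq} gives
\[
(\xi_i\otimes\M^1)K_i(\M^1\otimes\xi_i)-\tfrac12(\xi_i\otimes\M^1)K_i(\xi_i\otimes\M^1).
\]
We must check that this matches $\tfrac12(\M^1\cdot K_i\M^1)\,\xi_i\otimes\xi_i$, carefully fixing the convention for $u\otimes v$ as the matrix $uv^\top$. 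With $uv^\top K_i wx^\top=(v^\top K_iw)\,ux^\top$, the first term is $(m^\top K_im)\xi_i\xi_i^\top$. The second term is $(m^\top K_i\xi_i)\xi_i m^\top$, which as written is not symmetric. So the intended reading is probably symmetrized, or the convention must be matched. I expect to reconcile this by interpreting the correction term so that the combination equals $\tfrac12(m^\top K_im)\xi_i\xi_i^\top$. This bookkeeping is the main obstacle, together with the truncation.

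The remaining step is the bound on $\M^1$. Since each $\xi_i$ has nonpositive entries and $(\M^1)^\top K_i\M^1\ge0$, each component of $\M^1$ is nonincreasing. Since $\M^1\ge0$ by positivity of $f$, we get $\norm{\M^1(t)}\le C\norm{\M^1(0)}$. Nonnegativity of the entries of $\M^2$ is immediate from $f\ge0$ and $z\in\S\subset\R_+^2$.
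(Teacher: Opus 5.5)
Your route is the paper's: cut off $z$ and $z\otimes z$, let $R\to\infty$ using the moment bounds, then expand $\Delta_i\varphi$ using bilinearity and symmetry of $\K_i$. Your algebra for both moment equations is correct.

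The step that fails is the bound on $\M^1$. It is not true that each $\xi_i$ has nonpositive entries, since $\xi_1=(-2,1)^\top$ (an $R_1$ event creates a new site). Indeed
\[
\frac{d}{dt}\M^1_2=\frac{\alpha_1}{2}(\M^1_1)^2-\frac{\alpha_2}{2}\M^1_1\M^1_2-\alpha_3(\M^1_2)^2 ,
\]
which is positive when $\alpha_1>0$ and $\M^1_2$ is small compared with $\M^1_1$. So $\M^1_2$ is not monotone. The correct argument, which is the paper's, uses only $\xi_{i,1}+\xi_{i,2}\in\{-1,-2\}$. Hence $\M^1_1+\M^1_2$ is nonincreasing, and together with $\M^1\geq 0$ this gives $\norm{\M^1(t)}\leq \M^1_1(0)+\M^1_2(0)\leq \sqrt{2}\,\norm{\M^1(0)}$.

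The mismatch you found in the last term of \eqref{eq:sec2:SecMomEq} is genuine. Do not try to interpret it away, because neither fix you suggest works. For $i=3$ the symmetrized printed term has off-diagonal entries $-\M^1_1\M^1_2$, while the true term is a multiple of $e_2\otimes e_2$. The convention $u\otimes v=vu^\top$ breaks the cross terms instead.

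This is a typo in the statement, repeated in the paper's own proof where $-\tfrac12\iint z^\top K_i z'\,\xi_i\otimes\xi_i$ is rewritten. The identity you actually derived is the correct one: its last term is $-\tfrac{\alpha_i}{2}(\xi_i\otimes\M^1)K_i(\M^1\otimes\xi_i)=-\tfrac{\alpha_i}{2}((\M^1)^\top K_i\M^1)\,\xi_i\otimes\xi_i$. You should state explicitly that this is what you prove. It is also exactly what makes $\Nmom=\M^2+\M^1\otimes\xi_3$ satisfy $\tfrac{d}{dt}\Nmom=\Nmom^\top K_3\Nmom$ in Case~3 of Proposition~\ref{pro:BlowUp}.

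A smaller point concerns the truncation. For the truncated quadratic test function the cancellation of $|z|^2$ and $|z'|^2$ is lost. The $R$-uniform dominating function is therefore $C|z||z'|(1+|z|^2+|z'|^2)$, and this is where the third moments are needed. Your bound $C(1+|z|^2)(1+|z'|^2)$ only controls the limit integrand.
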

\begin{proof}
	We prove this result using a cutoff of the test functions $ z $ and $ z\otimes z $. To this end, define for some $ R>0 $ the smooth function $\Psi_R $ such that $ \Psi_R (v) =1 $ for $ v \leq R $ and $\Psi_R (v)=0$ for $v > R+1$.
	
	We now prove \textit{(i)}. We take $\varphi_R(z) = \Psi_R (|z|) z $ as a test function in \eqref{eq:sec1:ContWeakForm}. Define also $ \varphi(z)=z $ and notice that for $ i\in \{1,2,3\} $
	\begin{align*}
		|\Delta_i \varphi_ R (z, z') - \Delta_i \varphi(z, z') | & \leq 
		| ( \Psi_R (| z+ z' + \xi_i |) -1 ) (z+ z' + \xi_i )  -  \left( \Psi_R ( |z | ) -1  \right)  z -  \left( \Psi_R ( |z' | ) -1  \right)  z' | 
		\\
		& \leq \left( \ind_{ |z|  \geq \frac{ R- | \xi_i |}{2 } } (z,z') +  \ind_{ |z'|  \geq \frac{ R- | \xi_i |}{2 } }  (z,z')  \right)  |z+z'+\xi_i |. 
	\end{align*}
	Due to $f \in L^\infty_{\loc} ([0, T);  \M_{2, + })$ and the product structure of the kernels, we obtain
	\begin{align*}
		\int_{\S}\int_{\S} \K_i(z,z') \left| \Delta_i \varphi_ R (z, z') - \Delta_i\varphi(z,z') \right| f(t,dz)f(t,dz') \rightarrow 0 \text{ as } R \to \infty. 
	\end{align*}
	As a consequence the first moments $\M^1 $ satisfy the following system of equations 
    \begin{align*}
    	\M^1 ( t ) = \M^1 (0 ) + \sum_{i =1}^3\frac{\alpha_i}{2} \int_0^t \xi_i  \left( \M^1(s)\right) ^\top K_i \M^1 ( s) ds.
    \end{align*}
	In particular, we deduce that $\M^1 \in C^1 ([0, T ]; \R^2_+) $ and it satisfies the asserted ODE. Summing this vectorial equation yields with $ \xi_{i,1}+\xi_{i,2}\leq0 $ for all $ i=1,2,3 $
	\begin{align*}
		0\leq \M^1_1 (t)+\M^1_2 (t) \leq \M^1_1 (0)+\M^1_2 (0).
	\end{align*}

	We now prove \textit{(ii)}. We now choose $\varphi(z) = z \otimes z $ and $ \varphi_R(z) = \Psi_R (|z|) (z \otimes z) $. We again have
	\begin{align*}
		| \Delta_i  \varphi_R (z,z' ) -  \Delta_i  \varphi (z,z' )  | & \leq | (\Psi_R (|z+z' + \xi_i |) -1 ) ((z+z'+\xi_i ) \otimes  (z+z'+\xi_i )) \\
		&\quad + (\Psi_R (|z|) -1 ) (z \otimes z ) + (\Psi_R (|z'|) -1 ) (z' \otimes z' ) |  \\
		& \leq \ind_{| z+ z' + \xi_i | \in (R, R+1 )} (z,z')((z+z'+\xi_i ) \otimes  (z+z'+\xi_i )) \\
		&\quad +\ind_{| z | \in (R, R+1 )} (z,z') (z \otimes z )  + \ind_{| z' | \in (R, R+1 )} (z,z') (z' \otimes z' ). 
	\end{align*}
	Since $ f \in L^\infty_{\loc}([0, T);\M_{3, + }) $ we can pass to the limit $ R\to \infty $ in the weak formulation. Observe that 
	\begin{align*}
	 	\Delta_i \varphi (z, z') &= (z+ z' + \xi_i ) \otimes  (z+ z' + \xi_i ) - z \otimes z - z' \otimes z' 
	 	\\
	 	& = ( z+ \xi_i ) \otimes (z' + \xi_i )  +( z'+ \xi_i ) \otimes (z + \xi_i )   - \xi_i \otimes \xi_i.
	\end{align*}
	This yields for $ i=1,2,3 $
	\begin{align*}
		&\int_{\S}\int_{\S} \K_i(z,z') \Delta_i\varphi(z,z') f(t, dz) f(t, dz')
		\\
		&=\int_{\S}\int_{\S} ( z^\top K_i z' )  [( z+ \xi_i ) \otimes (z' + \xi_i ) ] f(t, dz) f(t, dz') -  \frac{1}{2 }   \int_{\S}\int_{\S} ( z^\top K_i z' )  
		(\xi_i \otimes \xi_i )  f(t, dz) f(t, dz') \\ 
		&=  \left(\M^2(t) + \xi_i \otimes \M^1(t) \right) K_i \left(\M^2(t) + \xi_i \otimes \M^1(t) \right) - \frac{1}{2 } \left( \xi_i \otimes \M^1 (t) \right) K_i  \left( \xi_i \otimes \M^1 (t)  \right). 
	\end{align*}
	In particular, we obtain the equation
	\begin{align*}
		\M^2 (t) &= \M^2 (0) + \sum_{i=1}^3\alpha_i \int_0^t \left[ \left(\M^2(s) + \xi_i \otimes \M^1(s) \right) K_i \left(\M^2(s) + \xi_i \otimes \M^1(s) \right) \right.
		\\
		&\qquad \left.  - \frac{1}{2 } \left( \xi_i \otimes \M^1 (s) \right) K_i  \left( \xi_i \otimes \M^1 (s)  \right) \right]  ds.
	\end{align*}
	As a consequence $\M^2 \in C^{1} ((0, T ] ; \R_+^{2 \times 2 }) $ and $\M^2$ satisfies the ODE \eqref{eq:sec2:SecMomEq}.
\end{proof}

\begin{defi}\label{def:BlowUpTime}
	Let $ \alpha \in \R_+^3 $. Let $ \M^1, \, \M^2 $ be the solution to the first and second order moment equations in Proposition \ref{pro:MomentEquations}. We define the blow-up time
	\begin{align*}
		T_{\alpha, *}(f_0) := \sup \left\lbrace T\geq0 \, : \, \sup_{t\in[0,T]} |\M^2(t)|<\infty \right\rbrace \in (0,\infty].
	\end{align*}
\end{defi}

\begin{thm}\label{thm:WellPosed}
	Consider $ \alpha \in \R_+^3 $ and $ p\in \N $, $ p\geq 3 $. Let $ f_0\in \Meas_{p,+}(\S) $ and $ T_*=T_{\alpha,*}(f_0) $ be given as in Definition \ref{def:BlowUpTime}. Then, there is a unique weak solution $ f\in C^1([0,T_*); \Meas_{+})\cap L^\infty_{\loc}([0,T_*); \Meas_{p,+}) $ to \eqref{eq:sec1:ContWeakForm}. Furthermore, the solution satisfies the following properties:
	\begin{enumerate}[(i)]
		\item for every $t \in [0, T_*)$ it holds that $ \norm{\M^1(f(t))}\leq C\norm{\M^1(f(0))} $ for some constant $ C>0 $.
		
		\item the function $ t\mapsto \M^2(f(t)) $ belongs to $ C^1([0,T];\R^{2\times2}_{\sym}) $ and satisfies the Riccati equation \eqref{eq:sec2:SecMomEq}.
		
		\item we have for all $ t < T_* $
        \begin{align*}
			\norm[p]{f(t)} \leq C_p\exp\left( C_p\int_0^t\left( 1+\norm{\M^2(s)}\right) \, ds \right),
		\end{align*}
        for a time-independent constant $ C_p>0 $.
	\end{enumerate}
\end{thm}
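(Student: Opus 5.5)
The plan is to obtain the solution as a limit of solutions with truncated kernels. The truncation gives uniform control of the $p$-th moments as long as $\M^2$ stays bounded, which is exactly what Theorem \ref{thm:WellPosed} quantifies through the blow-up time $T_*$.

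\textbf{Approximation.} For $n\in\N$ I replace $\K_i$ by the bounded kernel $\K_i^n(z,z')=\K_i(z,z')\Psi_n(|z|)\Psi_n(|z'|)$, with $\Psi_n$ as in the proof of Proposition \ref{pro:MomentEquations}. The operator $\CO_n$ is then Lipschitz on bounded sets of $\Meas(\S)$ in total variation. Picard iteration gives a local solution. Positivity follows from writing the equation as $\partial_t f+ f\,L_n[f]=G_n(f,f)$ with $G_n\ge0$ and using an exponential integrating factor. Since $\K^n_i$ is bounded and the total number $\int f$ is nonincreasing (take $\varphi=1$; then $\Delta_i 1=-1$), the solution $f_n\in C^1([0,\infty);\Meas_+(\S))$ is global. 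Because $\Psi_n$ decays, $f_n$ also has finite $p$-th moments, by a Gronwall argument with $|z|^p$.

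\textbf{Moment bounds uniform in $n$.} This is the main step. I test with $\varphi(z)=|z|^p$; a cutoff argument as in Proposition \ref{pro:MomentEquations} justifies this. Since $\xi_{i,1}+\xi_{i,2}\le 0$ and all coordinates are positive, $|z+z'+\xi_i|\le |z|+|z'|$. The binomial expansion then gives
\[
\Delta_i|\cdot|^p(z,z')\le \sum_{k=1}^{p-1}\binom{p}{k}|z|^k|z'|^{p-k}.
\]
Multiplying by $\K_i^n\le \K_i\le C|z||z'|$, the terms with $k=1$ or $p-k=1$ give $|z|^{2}|z'|^{p}$, which integrates to at most $C\norm{\M^2}\,\M^p$. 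The intermediate terms $|z|^{k+1}|z'|^{p-k+1}$ with $2\le k\le p-2$ are handled by interpolation: $\M^{k+1}\M^{p-k+1}\le C\M^2\M^p$, by log-convexity of moments (Hölder between orders $2$ and $p$). This gives
\[
\tfrac{d}{dt}\M^p_n\le C_p\bigl(1+\norm{\M^2_n}\bigr)\M^p_n .
\]
Here $\M^2_n$ satisfies a Riccati inequality that is dominated by the true Riccati ODE \eqref{eq:sec2:SecMomEq}. The matrices are entrywise nonnegative, and truncation can only decrease the positive quadratic terms; I expect a comparison principle for cooperative ODEs to give $\M^2_n(t)\le\M^2(t)$ on $[0,T_*)$, where $\M^2$ is the solution of the limit moment system with data $f_0$. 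I expect this to be the delicate point. The $\xi$-terms in \eqref{eq:sec2:SecMomEq} have mixed signs, so if the comparison fails I will instead bound $\norm{\M^2_n}$ by the solution of a scalar Riccati equation $\dot m=C(m^2+1)$. That alternative only yields existence up to a smaller time, which I would then extend in a second step by the a posteriori identification of the limit's second moments with $\M^2$ via Proposition \ref{pro:MomentEquations}. Either way, Gronwall gives (iii) uniformly in $n$ on compacts of $[0,T_*)$.

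\textbf{Passage to the limit, uniqueness and properties.} I need both tightness and equicontinuity in time. Tightness comes from the uniform $p$-moments with $p\ge3$. Equicontinuity comes from the weak form, since $|\K_i\Delta_i\varphi|\le C\norm{\varphi}_\infty|z||z'|$ is integrable. Arzelà–Ascoli then gives a subsequence converging in $C([0,T];\Meas_+)$ for every $T<T_*$. The nonlinear term passes to the limit because the moment bound of order $3>2$ gives uniform integrability of $|z||z'|$. Time regularity $C^1$ follows from continuity of the integrand. Properties (i) and (ii) are then Proposition \ref{pro:MomentEquations}, applicable since $p\ge3$. For uniqueness, I take two solutions $f,g$ and estimate $\norm{(1+|z|)(f-g)}$ by testing with $\mathrm{sgn}(f-g)(1+|z|)$. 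The bound $\K_i\le C|z||z'|$ and the bounded third moments on $[0,T]$ close a Gronwall estimate
\[
\tfrac{d}{dt}\norm[1]{f-g}\le C\bigl(\norm[3]{f}+\norm[3]{g}\bigr)\norm[1]{f-g},
\]
which gives $f=g$.
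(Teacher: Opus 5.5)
Your proposal is correct. The existence part follows the paper; uniqueness goes by a genuinely different route.

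\textbf{Existence.} This is essentially the paper's argument: truncated kernels, the bound $\norm[p]{f_n(t)}\le C\exp\bigl(C\int_0^t(1+\norm{\M^2_n(s)})\,ds\bigr)$, compactness, and continuation to $[0,T_*)$. Your estimate $\Delta_i|\cdot|^p\le\sum_{k=1}^{p-1}\binom{p}{k}|z|^k|z'|^{p-k}$, combined with $m_{k+1}m_{p-k+1}\le m_2m_p$ for $m_j=\int_{\S}|z|^jf_n(t,dz)$, is a tidier version of the paper's case splitting.

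You should drop the hoped-for comparison $\M^2_n\le\M^2$. The truncated second-moment equation is not closed, and $\M^1_n\neq\M^1$. Worse, for $\varphi(z)=z\otimes z$ the $(1,1)$ entry of $\Delta_1\varphi$ equals $-4$ whenever $x=2$, which is an unbounded set. On that set truncating the kernel increases the integrand, so you do not even get a one-sided inequality.

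Your fallback is exactly what the paper does (Lemma \ref{lem:TruncModelUnifBound} and Step 2 of its proof):
\begin{itemize}
\item a scalar Riccati bound gives an existence time depending only on $\norm[2]{f(t_0)}$ at the restart time $t_0$;
\item the limit's $\M^2$ solves \eqref{eq:sec2:SecMomEq}, so it stays bounded on $[0,T]$ for each $T<T_*$;
\item the local pieces are glued by uniqueness.
\end{itemize}
Compactness is then applied on these short intervals, not directly on every $[0,T]$ with $T<T_*$.

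\textbf{Uniqueness.} The paper (Lemma \ref{lem:Uniquenss}) tests with $z(1-e^{-z\cdot\zeta})$. It derives a quasilinear first-order PDE for the desingularized Laplace transform and invokes uniqueness for the characteristic system.

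You use a Norris-type weighted total-variation stability estimate, and it does close. Set $h=f-g$, $w=1+|z|$, and let $\sigma$ be the sign of $h$. Symmetry of $\K_i\Delta_i\varphi$ gives $\iint\K_i\Delta_i\varphi\,(ff'-gg')=\iint\K_i\Delta_i\varphi\,h(dz)(f+g)(dz')$. Since $|z+z'+\xi_i|\le|z|+|z'|$ on $\S$, testing with $\varphi=\sigma w$ bounds the integrand by $\K_i(z,z')\bigl(w(z+z'+\xi_i)+w(z')-w(z)\bigr)|h|(dz)(f+g)(dz')\le C|z||z'|(1+2|z'|)\,|h|(dz)(f+g)(dz')$. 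So second moments of $f,g$ already suffice, not third.

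The cost of your route is the measure-theoretic justification of differentiating $\norm{w(f-g)}$ with a time-dependent, discontinuous sign. This needs the standard Norris-type argument via a dominating measure and densities, plus a cutoff of the unbounded weight.

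In return you get:
\begin{itemize}
\item Lipschitz stability in weighted total variation;
\item uniqueness under weaker moment assumptions;
\item independence from points the Laplace-transform argument treats only briefly, namely that every classical solution of the transformed PDE is generated by characteristics and that these cover $\R_+^2$.
\end{itemize}
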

We will prove this result in several steps:
\begin{enumerate}[(i)]
	\item At first we show that weak solutions in $ C([0,T); \Meas_{+})\cap L^\infty_{\loc}([0,T); \Meas_{3,+}) $ are unique. The following steps then concerns the construction of the weak solution.
	
	\item We prove well-posedness of an appropriately truncated equation for which the coagulation operator is bounded. 
	
	\item We then derive uniform bounds on the $ p $-th moments independent of the truncation parameter. These bounds are a priori only available on some small, non-empty time interval $ [0,T] $. 
	
	\item Based on (ii) we extract a weakly converging subsequence, yielding a weak solution on the time interval $ [0,T] $ by passing to the limit in the weak formulation. Furthermore, we show that the so constructed solution satisfies (i)-(iv) in Theorem \ref{thm:WellPosed}.
	
	\item Finally, we show that the solution can be extended to the interval $ [0,T_*) $. Here, we make use of the fact that the $ p $-th moment remain bounded as long as the second moment does not blow-up.
\end{enumerate}
As a preparation for the proof of Theorem \ref{thm:WellPosed} we provide the following lemma.

\begin{lem}\label{lem:Uniquenss}
	Let $ \alpha \in \R_{+}^3 $, $ T>0 $, $ f_0\in \Meas_{3,+}(\S) $. Any two weak solutions in the class $ C([0,T); \Meas_{+})\cap L^\infty_{\loc}([0,T); \Meas_{3,+}) $  to \eqref{eq:sec1:ContWeakForm} with initial condition $ f_0 $ coincide.
\end{lem}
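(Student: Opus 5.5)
Let $f,g$ be two weak solutions in $C([0,T);\Meas_+)\cap L^\infty_{\loc}([0,T);\Meas_{3,+})$ with the same datum $f_0$. Fix $T'<T$ and work on $[0,T']$, where $f$ and $g$ have uniformly bounded third moments. The plan is a Gronwall argument for the difference $h=f-g$ in a weighted total variation norm. Because the kernels grow quadratically, the weight has to absorb this growth. I would use the weight $w(z)=1+|z|$, or $w(z)=|z|^2$, which is natural since $\K_i(z,z')\le C|z||z'|$. The measure $f$ is $C^1$ into $\Meas_+$ when tested against bounded functions. I would first extend the weak formulation to test functions $\varphi$ with $|\varphi(z)|\le w(z)$, using the cutoff $\Psi_R$ argument from the proof of Proposition \ref{pro:MomentEquations}; the third-moment bound makes the remainders vanish. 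After that, $t\mapsto \int\varphi\, h$ is absolutely continuous.

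Next I would write the bilinear difference as
\[
\K_i(z,z')\Delta_i\varphi\,(f\otimes f-g\otimes g)=\K_i\Delta_i\varphi\,\bigl(h\otimes f+g\otimes h\bigr).
\]
For the sign-adapted test function I would take $\varphi(t,z)=\mathrm{sgn}(h(t))(z)\,w(z)$. This is handled by the Jordan decomposition of $h$ and an approximation by $C_b$ functions, or one can equivalently work with $\sup_{|\psi|\le 1}$. The goal is the bound
\[
\frac{d}{dt}\int w\,|h| \le C\Bigl(\int (1+|z|^2)w\,(f+g)\Bigr)\int w\,|h|.
\]
To see where this comes from, note that $w(z+z'+\xi_i)\le w(z)+w(z')+C$ for $w=1+|z|$, since $|\xi_i|\le 2$. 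The loss term gives $-\K_i(z,z')\varphi(z)$ times $|h|(dz)$, which has a good sign. It partially cancels against the $\varphi(z+z'+\xi_i)$ term via $\mathrm{sgn}$, in the standard way. The remaining terms are bounded by $C|z||z'|\,(w(z')+1)\,|h|(dz)(f+g)(dz')$.

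This is where the main obstacle lies. The factor $|z|$ coming from the kernel lands on $|h|$, so it is not controlled by $\int w|h|$ when $w=1+|z|$. To close the estimate, the weight multiplying $|h|$ must carry one extra power. I would try $w(z)=|z|$ with the classical cancellation for product kernels. Writing the terms in $|h|(dz)$ gives
\[
|z'|\bigl(|z+z'+\xi_i|-|z|-|z'|\bigr)\le 0 ,
\]
since $\xi_{i,1}+\xi_{i,2}\le 0$ and all coordinates are nonnegative on $\S$. Then only terms of the form $|z||z'|\cdot|z'|$ with $|h|(dz)$ remain, after using the sign cancellation: $\varphi(z+z'+\xi_i)\,\mathrm{sgn}$ against $-\varphi(z)$. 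Even so, one factor $|z|$ still falls on $h$. The fallback is the Norris-style approach: estimate $\int(1+|z|^2)|h|$ together with the second-moment bounds, since the difference of the second moments $\M^2(f)-\M^2(g)$ solves the Riccati ODE \eqref{eq:sec2:SecMomEq} with identical data. By the uniqueness of ODE solutions and Proposition \ref{pro:MomentEquations}, $f$ and $g$ therefore have equal first and second moments. With $w=|z|$, the problematic term becomes
\[
\int |z|\,|z'|^2\,\mathrm{sgn}(h)(z+z'+\xi_i)\,h(dz)\,(f+g)(dz').
\]
I would split it into the part that pairs with $\int z\,h(dz)=\M^1(f)-\M^1(g)=0$, which vanishes, and a remainder that is controlled by $\int |z|\,|h|$ times the third moments of $f+g$. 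Gronwall's inequality then gives $h\equiv 0$ on $[0,T']$, and since $T'<T$ is arbitrary, on $[0,T)$.
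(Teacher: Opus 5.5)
Your strategy, a Gronwall estimate for $h=f-g$ in a weighted total variation norm with a sign-adapted test function, differs from the paper's proof. The paper passes to the desingularized Laplace transform $\hat f(t,\zeta)=\int z(1-e^{-z\cdot\zeta})\,f(t,dz)$. It uses the exact bilinear form $\K_i(z,z')=z^\top K_iz'$ to obtain a closed quasilinear first-order PDE $\partial_t\hat f=(b\cdot\nabla_\zeta)\hat f+c$, and reduces uniqueness to uniqueness for the characteristic ODE system. Your route works once the key estimate is done correctly. As written, however, you misdiagnose that estimate and replace it by a step that is not valid.

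The obstacle you describe does not exist. After the sign cancellation, the integrand against $|h|(dz)\,(f+g)(dz')$ is at most $\K_i(z,z')\bigl(w(z+z'+\xi_i)-w(z)+w(z')\bigr)$. For $w(z)=|z|$ you have $|z+z'+\xi_i|=|z|+|z'|+\xi_{i,1}+\xi_{i,2}\le|z|+|z'|$ on $\S$, and $\K_i(z,z')\le|z||z'|$, so the integrand is at most $2|z|\,|z'|^2$. The single factor $|z|$ that lands on $|h|$ is precisely the weight, hence
\[
\frac{d}{dt}\int_{\S}|z|\,|h(t)|(dz)\le C\Bigl(\int_{\S}|z'|^2\,(f+g)(t,dz')\Bigr)\int_{\S}|z|\,|h(t)|(dz).
\]
Gronwall with $h(0)=0$ and $|z|\geq4$ on $\S$ then gives $h\equiv0$. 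Even your first bound with $w=1+|z|$ already closes, since $|z|\le w(z)$.

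Your fallback, by contrast, fails as stated. In $\int|z|\,|z'|^2\,\mathrm{sgn}(h)(z+z'+\xi_i)\,h(dz)\,(f+g)(dz')$ the factor $\mathrm{sgn}(h)(z+z'+\xi_i)$ depends on $z$. So no part of this integral factors through $\int z\,h(dz)=\M^1(f)-\M^1(g)$ and vanishes. The equality of first and second moments from the Riccati system is simply not needed.

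The point that does need care is the time-dependent, discontinuous test function $\mathrm{sgn}(h(t))\,w$. First upgrade the weak form to $h(t)=\int_0^t\nu_s\,ds$ as measures, where $\nu_s$ is the difference of the two coagulation operators and $\int_0^t\int w\,d|\nu_s|\,ds<\infty$ (finite second moments suffice). Then apply the standard lemma for such paths, $|h(t)|=\int_0^t\mathrm{sgn}(h(s))\,\nu_s\,ds$. A naive supremum over $|\psi|\le1$ does not by itself retain the cancellation of the loss term.

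Comparing the two approaches: once corrected, your argument uses only $\K_i(z,z')\le C|z||z'|$ and the subadditivity $|z+z'+\xi_i|\le|z|+|z'|$. It therefore gives uniqueness among solutions with merely locally bounded second moments and does not depend on the precise form of the kernels. The paper's argument relies on the exact bilinear structure to close the equation for $\hat f$. It also requires following characteristics backward from $(t,\zeta)$; these stay in $\R_+^2$ because the transport velocity has nonnegative components. In exchange, it sets up the Laplace-transform and characteristics machinery that the paper reuses later for the self-similar asymptotics.
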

\begin{proof}
	We make use of the desingularized Laplace transform. In fact, we show that the Laplace transform $ \hat{f} $ satisfies a quasilinear first order PDE, which has a unique classical solution. 
	
	To this end, we use in \eqref{eq:sec1:ContWeakForm} the test function $ \varphi(z)=z(1-e^{-z\cdot \zeta}) $ for $ \zeta\in \R_{+}^2 $. Be the regularity assumption and a similar argumentation as in Proposition \ref{pro:MomentEquations} we obtain $ \hat{f}\in C^1_tC^0_\zeta\cap C^0_tC^1_\zeta $. Furthermore, we obtain for all $ t\in [0,T) $, $ \zeta\in \R_+^2 $
	\begin{align*}
		\partial_t \hat{f}(t,\zeta) &= \sum_{i =1}^3\dfrac{\alpha_i}{2}\int_{\S}\int_{\S} \K_i(z,z')\left[ \left( z+z'+\xi_i \right)\left( 1-e^{-(z+z'+\xi_i)\cdot \zeta} \right) - z(1-e^{-z\cdot\zeta})- z'(1-e^{-z'\cdot\zeta}) \right] \, ff'
		\\
		&= \sum_{i =1}^3\alpha_i\int_{\S}\int_{\S} \K_i(z,z') \, ze^{-z\cdot \zeta}\left( 1-e^{-(z'+\xi_i)\cdot\zeta} \right)  \, ff' 
		\\
		&\quad +\sum_{i =1}^3\alpha_i \dfrac{1}{2}\int_{\S}\int_{\S} \K_i(z,z') \, \xi_i \left( 1-e^{-(z+z'+\xi_i)\cdot\zeta} \right)  \, ff'.
	\end{align*}
	Using the form of the kernel $ \K_i $ we then obtain 
	\begin{align*}
		\partial_t \hat{f}(t,\zeta) &= D_\zeta\hat{f}\left[\sum_{i =1}^3\alpha_i K_i \, \left( \hat{f} + (1-e^{-\xi_i\cdot \zeta})(\M^1-\hat{f}) \right) \right]
		\\
		&\quad + \sum_{i =1}^3\alpha_i\dfrac{1}{2}\left( (\M^1)^\top K_i\M^1 - (\M^1-\hat{f})^\top \, K_i (\M^1-\hat{f}) e^{-\xi_i\cdot \zeta} \right) \xi_i.
	\end{align*}
	This equation is of the from
	\begin{align*}
		\partial_t\hat{f} = \left( b(t,\zeta,\hat{f})\cdot \nabla_{\zeta} \right)  \hat{f} + c(t,\zeta,\hat{f})
	\end{align*}
	for vector-valued functions $ b,\, c\in C^1 $. This equation can be solved using the method of characteristics. In particular, any classical solution provides a solution to the characteristic system: for all $ \zeta\in \R_+^2 $
	\begin{align*}
		\begin{cases}
			Z'(t,\zeta) = b(t,Z(t,\zeta),F(t,\zeta)),
			\\
			F'(t,\zeta) = c(t,Z(t,\zeta),F(t,\zeta))
		\end{cases}, \quad  Z(0,\zeta)=\zeta, \quad F(0,\zeta)=\hat{f}_0(\zeta).
	\end{align*}
	Since solutions to this system are unique, we conclude that the Laplace transforms to any two weak solutions to \eqref{eq:sec1:ContWeakForm} coincide for all $ t\in [0,T) $, $ \zeta\in \R_+^2 $. Hence, the weak solutions coincide as well.
\end{proof}

As mentioned above, in order to construct a weak solution we introduce an appropriate truncated model. To this end, let $ i\in \{1,2,3\} $ and $ R>0 $. The \textit{truncated kernels} $\K^{(R)} _i: \S\times \S\rightarrow \R_+$ are symmetric continuously differentiable bounded functions such that 
\begin{equation} \label{sec2:truncated kernels}
     \begin{cases} & \K_i^{(R)} (z,z')= \K_i (z,z'), \quad (z,z' ) \in ([0, R]^2 \times [0, R ]^2) \cap \S^2  \\
      &  \K_i^{(R)} (z,z')= 0    \quad (z,z' )\in \S^2 \setminus ( [0, 2R]^2 \times [0, 2R ]^2 )  \\
      &   0 \leq  \left(   \K_i (z,z') -  \K_i^{(R)} (z,z') \right)   \leq  e^{- R }  \quad (z,z' ) \in  ( \S^2 \cap ([0, 2R]^2 \times [0, 2R ]^2) ) \setminus ([0, R]^2 \times [0, R ]^2).          \end{cases}
\end{equation}
The weak form to the truncated equation with these kernels is given by
\begin{align}\label{eq:sec2:TruncEquation}
	\int_{\S}\varphi(z)  f_R  (t,dz) = \int_{\S}\varphi(z)  f_0(dz) +  \sum_{i=1}^3\dfrac{\alpha_i}{2} \int_0^t \int_{\S}\int_{\S} \K_i^{(R)}(z,z') \, \Delta_i\varphi(z,z') \, f_R (s,dz)f_R (s,dz') \, ds,
\end{align}
which needs to be satisfied for every $ \varphi\in C_c(\S) $.

\begin{defi}[Truncated coagulation equation]\label{def:sec2:TruncEquation}
	Let $ \alpha\in\R_+^3 $ and let $R>0 $. Assume that $\K^{(R)}_i $ for $ i=1,2,3 $ are truncated kernels satisfying \eqref{sec2:truncated kernels}. A function $ f_R \in  C^1 ([0,\infty); \Meas_{+, b } (\S) )$ is a weak solution to the truncated model if \eqref{eq:sec2:TruncEquation} is satisfied for all $ \varphi\in C_c(\S) $ and $ t\geq 0 $.
\end{defi}
\begin{rem}
	Note that by definition $ \K_i^{(R)} $ is continuous and compactly supported. This has the advantage that also unbounded test functions $\varphi \in C(\S) $ can be used in the weak form, since
	\begin{align*}
		(z, z') \mapsto \K_i^{(R)}(z,z') \, \Delta_i\varphi(z,z')
	\end{align*}
	is continuous with compact support. In particular, this will be useful in order to study the moments of $ f_R $ and no truncation argument as in the proof Proposition \ref{pro:MomentEquations} is needed.
\end{rem}
\begin{lem}\label{lem:WellPosedTruncModel}
	Let $ \alpha\in \R_{+}^3 $, $ p\geq1 $ and $R>0$. Assume that $ f_0\in \Meas_{p,+}(\S) $. Then there is a unique solution $ f_R \in C([0,\infty); \Meas_{+ }) \cap L^\infty_{\loc}([0,\infty);\Meas_{p,+}) $ to \eqref{eq:sec2:TruncEquation} in the sense of Definition \ref{def:sec2:TruncEquation} with initial condition $ f_0 $.
\end{lem}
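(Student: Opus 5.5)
The plan is to read \eqref{eq:sec2:TruncEquation} as an ODE in the Banach space $\Meas(\S)$ with total variation norm. Define the truncated operator
\begin{align*}
	\langle \varphi, \CO_R(\mu,\mu)\rangle = \sum_{i=1}^3\dfrac{\alpha_i}{2}\int_{\S}\int_{\S}\K_i^{(R)}(z,z')\,\Delta_i\varphi(z,z')\,\mu(dz)\mu(dz').
\end{align*}
This defines a bilinear map $\Meas(\S)\times\Meas(\S)\to\Meas(\S)$, since it is the push-forward of $\K_i^{(R)}\mu\otimes\mu$ under $(z,z')\mapsto z+z'+\xi_i$, minus the two marginals. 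Because $\K_i^{(R)}$ is bounded, say by $\norm[\infty]{\K^{(R)}}$, we get $\norm{\CO_R(\mu,\nu)}\le C_R\norm{\mu}\norm{\nu}$. So $\CO_R$ is locally Lipschitz on $\Meas(\S)$. Picard--Lindel\"of then gives a unique local solution $f_R\in C^1([0,T_0);\Meas(\S))$ of $\partial_t f_R=\CO_R(f_R,f_R)$, and this solution satisfies the weak form for every $\varphi\in C_b(\S)$, in particular every $\varphi\in C_c(\S)$. Uniqueness in the stated class follows the same way: any weak solution in $C([0,\infty);\Meas_+)$ is a mild solution of the same Lipschitz ODE, so Gr\"onwall applied to the difference of two solutions gives uniqueness. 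Here weak equality tested against $C_c(\S)$ determines a finite measure.

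Next I need positivity and global existence. I would rewrite the equation as
\begin{align*}
	\partial_t f_R = G_R(f_R,f_R) - f_R\,a_R[f_R], \qquad a_R[\mu](z)=\sum_i\alpha_i\int_{\S}\K_i^{(R)}(z,z')\,\mu(dz'),
\end{align*}
where $G_R$ is the gain term, which is positive on positive measures. I then run a modified Picard iteration for
\begin{align*}
	f(t)=e^{-\int_0^t a_R[f]}f_0+\int_0^t e^{-\int_s^t a_R[f]}\,G_R(f,f)(s)\,ds,
\end{align*}
or equivalently add $\lambda f$ to both sides with $\lambda\ge C_R\norm{f}$. The resulting fixed-point map preserves the cone $\Meas_+$ on short intervals. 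By uniqueness this positive solution is the one found before, so $f_R\ge0$. Testing with $\varphi\equiv1$ gives $\Delta_i 1=-1$, hence $\frac{d}{dt}\norm{f_R}\le0$. The total mass is therefore nonincreasing, there is no blow-up of the norm, and the solution extends to $[0,\infty)$.

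Last comes the bound in $L^\infty_{\loc}([0,\infty);\Meas_{p,+})$. The truncated kernel vanishes outside $[0,2R]^2\times[0,2R]^2$, so we may test with $\varphi(z)=|z|^p$; as in the remark above, the integrand is compactly supported and continuous. Its gain part is supported on $|z|,|z'|\le 4R$, where $|z+z'+\xi_i|^p\le (8R+2)^p$. This gives
\begin{align*}
	\frac{d}{dt}\int|z|^pf_R(t,dz)\le C_R\,\norm{f_0}^2,
\end{align*}
so the $p$-th moment grows at most linearly in $t$. To justify testing with an unbounded $\varphi$, I would approximate it by $\varphi\Psi_M$ and pass to the limit using the compact support of $\K_i^{(R)}$.

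The main obstacle is the positivity step: the weak formulation alone does not immediately give $f_R\ge0$. I would handle it with the exponential (mild) formulation, for which the fixed-point map is contractive on a closed subset of the positive cone over a short time interval, and then iterate using the mass bound.
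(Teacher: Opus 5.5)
Your proof is correct, but it is organized differently from the paper's.

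\textbf{The paper's route.} The paper uses a single contraction argument. It writes the truncated equation in mild (exponential) form and applies Banach's fixed point theorem on the set $\mathcal{X}_T$ of nonnegative measures with $p$-th moment at most $1+\norm[p]{f_0}$, metrized by the Kantorovich--Rubinstein distance $d_T$. Positivity and the moment bound are thus built into the space. Global existence then follows, as in your last two steps, from $\norm{f(t)}\le\norm{f_0}$ and a linear-in-time bound on $\norm[p]{f(t)}$ that comes from the compact support of $\K_i^{(R)}$.

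\textbf{Your route.} You work in total variation throughout. Picard--Lindel\"of gives a possibly signed solution. The mild formulation restricted to the positive cone (essentially the paper's argument, but in TV norm) gives a nonnegative one, and uniqueness identifies the two.

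\textbf{What TV buys you.} First, the Lipschitz estimates follow from boundedness of $\K_i^{(R)}$ alone, with no Lipschitz test functions or differentiability of the truncated kernel. Second, your Gr\"onwall argument gives uniqueness among all weak solutions in $C([0,\infty);\Meas_+)$, which is what the lemma asserts. Any such solution is automatically Lipschitz in time in TV, by testing against $C_c(\S)$, so it is a genuine integral solution of the same Lipschitz ODE. By contrast, the paper's fixed point is only shown to be unique inside $\mathcal{X}_T$, and it leaves implicit why an arbitrary weak solution is a mild solution lying there.

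\textbf{Two further remarks.} In your mild formula the factor $e^{-\int_s^t a_R[f]}$ acts on the gain measure, i.e.\ at the post-collision point $z+z'+\xi_i$, which is the correct form; the paper's $\mathcal{T}^1$ evaluates it at the pre-collision variable $z$. Also, your Picard--Lindel\"of step can be dropped: the positive-cone fixed point already produces a solution, and the TV Gr\"onwall estimate already gives uniqueness.
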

\begin{proof}
	We rewrite equation \eqref{eq:sec2:TruncEquation} in fixed point form. To this end we introduce the following notation. We define 
	\begin{align*}
		\mathcal{X}_T:= \left\{ g \in C([0,\infty); \Meas_{1,+}(\S)) : \sup_{t\in[0,T]}\norm[p]{g(t)} \leq 1 +\norm[p]{f_0} \right\}.
	\end{align*}
	We equip this set with the metric
	\begin{align*}
		d_T(f,g) := \sup_{t\in[0,T]}\sup_{\norm[\Lip]{\varphi}\leq1}\left| \dualbra{f(t)-g(t)}{\varphi} \right|,
	\end{align*}
	Then, the metric space $ (\mathcal{X}_T,d_T) $ is complete. Note that this metric corresponds to the Kantorovich-Rubinstein distance on $ \Meas_{1,+}(\S) $.
 	
	Given $ g \in \mathcal{X}_T $ we define the bounded and continuous function
	\begin{align*}
		\mu[ g ] (t, z):= \sum_{i=1}^3\alpha_i\int_{\S}  \K^{(R)}_i (z,z')  g(t, dz' ), \quad t \geq 0, \quad z \in \S_i.
	\end{align*}
	Furthermore, we denote by $\mathcal T_0(t) [g] $ the measure on $ \S $ defined by duality for all $ \varphi \in C_b(\S) $,  $ t \geq 0 $
	\begin{align*}
		\langle \varphi, \mathcal T^0 (t)[g] \rangle :=\int_{\S} \varphi (z) e^{- \int_0^t \mu[g](s,z) \, ds} f_0(dz), \quad \varphi \in C_b(\S), \quad t \geq 0.
	\end{align*} 
	In addition, we write $\mathcal T_1 (t)  [g] $ for the measure on $ \S $ defined by duality
	\begin{align*}
		\langle \varphi, \mathcal T^1(t)[g]  \rangle := \sum_{i =1}^3\frac{\alpha_i}{2} \int_0^t \int_{\S \times \S}  \K^{(R)}_i (z,z')  \varphi(z+z' + \xi_i ) e^{- \int_s^t \mu[g](r,z) dr} g(s,dz') g(s, dz) ds.
	\end{align*}
	Let $ \mathcal{T} := \mathcal{T}^0 +\mathcal{T}^1 $. 
	We now prove that the fixed point equation
	\begin{align}\label{eq:sec2.fixed point}
		g (t) = \mathcal{T} (t) [g] ,\ t \in [0, T)
	\end{align}
	has a unique solution $ f_R \in \mathcal{X}_T $ for a small enough $T>0$. Note that this yields a weak solution to \eqref{eq:sec2:TruncEquation}. 
	
	\textit{Step 1.} We first show that $ \mathcal{T} :   \mathcal{X}_T \rightarrow \mathcal{X}_T $ is a self-mapping for small $ T>0 $. From the definition one can see that for any $ g\in \mathcal{X}_T $ the function $ \mathcal{T}[g] $ is in $ C([0,\infty); \Meas_{+}(\S)) $. Furthermore, have the estimates
	\begin{align*}
		\norm[p]{\mathcal{T}^0[g](t)}\leq \norm[p]{f_0}, \quad \norm[p]{\mathcal{T}^1[g](t)}\leq TC_R\sup_{t\in[0,T]}\left( 1+\norm{g(t)}\right) ^2 \leq TC_R \left( 1+\norm[p]{f_0}\right)^2.
	\end{align*}
	In particular, $ \mathcal{T}[g]\in \mathcal{X}_T $ for sufficiently small $ T>0 $.
	
	\textit{Step 2.} We now show that $ \mathcal{T} $ is a contraction. To this end, let $ g_1,\, g_2\in \mathcal{X}_i $. We observe that for every $\varphi \in \Lip(\S) $ with $ \norm[\Lip]{\varphi} \leq 1 $
	\begin{align*}
		\norm[\infty]{\mu[g_1](t)-\mu[g_2](t)} &\leq C_R \,  d_T(g_1(t),g_2(t)),
		\\
		\left|\dualbra{\mathcal{T}^0[g_1](t)-\mathcal{T}^0[g_2](t)}{\varphi}\right| &\leq T C_R\left( 1+\norm[1]{f_0} \right)  \,  d_T(g_1(t),g_2(t)),
		\\
		\left|\dualbra{\mathcal{T}^0[g_1](t)-\mathcal{T}^0[g_2](t)}{\varphi}\right| &\leq T C_R\left( 1+\norm{f_0} \right)^2  \,  d_T(g_1(t),g_2(t)).
	\end{align*}
	Thus, $ \mathcal{T} $ is a contraction for sufficiently small $ T>0 $. Using Banach fixed point theorem we deduce that there is a unique fixed point.
	
	\textit{Step 3.} Let $ f $ be the unique weak solution on some time interval $ [0,T] $. We now show that there is no finite time blow-up. Observe that from the weak form \eqref{eq:sec2:TruncEquation} we obtain $ \norm{f(t)}\leq \norm{f_0} $. Thus, Step 1 yields the estimate
	\begin{align*}
		\sup_{t\in[0,T]}\norm[p]{f(t)}\leq \norm[p]{f_0} + TC_R \sup_{t\in[0,T]}\left( 1+ \norm{f(t)}\right)^2 \leq \norm[p]{f_0} + TC_R\left( 1+\norm{f_0} \right)^2. 
	\end{align*}
	Consequently, the solution can be extended to all of $ [0,\infty) $.
\end{proof}

In the following we prove uniform bounds on the solution to the truncated model.
\begin{lem}\label{lem:TruncModelUnifBound}
	Let $ \alpha\in \R_{+}^3 $, $ p\in \N $ and $ p\geq3 $. Consider $ f_0\in \Meas_{p,+}(\S) $ and $ f_R\in C([0,\infty); \Meas_{+})\cap L^\infty_{\loc}([0,\infty); \Meas_{p,+}) $ be the unique solution to \eqref{eq:sec2:TruncEquation} with initial condition $ f_0 $.
	
	Then, there is $ T_0>0 $ and $ C>0 $ depending only on $ \norm[2]{f_0} $ such that
	\begin{align*}
		\sup_{t\in[0,T_0]}\norm{\M^2(f_R(t))} \leq C.
	\end{align*}

	 Furthermore, there is a constant $ C>0 $ depending only on $ p $ and $ \norm[p]{f_0} $ such that
    \begin{align*}
		\norm[p]{f_R(t)}\leq  C\exp\left( C \int_0^t\left( 1+\norm{\M^2(f_R(s))}\right) \, ds \right).
    \end{align*}
\end{lem}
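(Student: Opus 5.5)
Since $\K_i^{(R)}$ is continuous with compact support, we may test \eqref{eq:sec2:TruncEquation} directly with the unbounded functions $z$, $z\otimes z$ and $|z|^p$, and differentiate in time. The strategy is to reproduce, at the truncated level, the moment computations of Proposition \ref{pro:MomentEquations}. These no longer give closed equations, but they do give one-sided \emph{inequalities}, because $0\le \K_i^{(R)}\le \K_i$ and the increments $\Delta_i\varphi$ have a controlled sign structure.

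\emph{Step 1: zeroth and first moments.} Testing with $\varphi\equiv1$ gives $\Delta_i\varphi=-1$, so $\norm{f_R(t)}\le\norm{f_0}$. Testing with $\varphi(z)=z_1+z_2$ gives $\Delta_i\varphi = \xi_{i,1}+\xi_{i,2}\le 0$. Hence $\M^1_1+\M^1_2$ is nonincreasing and $|\M^1(f_R(t))|\le |\M^1(f_0)|$, uniformly in $R$.

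\emph{Step 2: second moments on a short interval.} Test with $\varphi(z)=|z|^2=(z_1+z_2)^2$. Writing $\zeta_i=-\xi_{i,1}-\xi_{i,2}\in\{1,2\}$, we have
\[
\Delta_i\varphi(z,z') = 2|z||z'| - 2\zeta_i(|z|+|z'|)+\zeta_i^2 \le 2|z||z'|+4 .
\]
Combining this with $\K_i^{(R)}(z,z')\le \K_i(z,z')\le |z||z'|$ yields
\[
\frac{d}{dt}\int |z|^2 f_R \le C\Big(\int |z|^2 f_R\Big)^2 + C\Big(\int|z|f_R\Big)^2 .
\]
The second term is bounded by Step 1. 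Comparison with the scalar Riccati ODE $y'=C y^2+C'$, $y(0)=\int|z|^2f_0$, then gives $T_0>0$ and a bound on $\sup_{[0,T_0]}\norm{\M^2(f_R)}$ depending only on $\norm[2]{f_0}$. Here we use $\norm{\M^2}\le C\int|z|^2 f_R$, which holds by positivity.

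\emph{Step 3: $p$-th moments.} Test with $|z|^p$, $p\ge3$ an integer. The key estimate is
\[
|z+z'+\xi_i|^p-|z|^p-|z'|^p \le (|z|+|z'|)^p-|z|^p-|z'|^p = \sum_{k=1}^{p-1}\binom pk |z|^k|z'|^{p-k}.
\]
This uses $|z+z'+\xi_i|\le|z|+|z'|$, which holds since both components stay nonnegative in $\S$ and the coordinate sum decreases. Multiplying by $\K_i^{(R)}\le|z||z'|$, each resulting term has the form $\int|z|^{k+1}f_R\int|z'|^{p-k+1}f_R$ with $1\le k\le p-1$.

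The main obstacle is to bound these products linearly in the $p$-th moment, with a coefficient involving only the second moment. The plan is to interpolate between the moment orders $1$ and $p$. Set $m_q=\int|z|^q f_R$; log-convexity of $q\mapsto m_q$ gives $m_{k+1}m_{p-k+1}\le m_2\, m_p$ whenever $2\le k+1,\,p-k+1\le p$.

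The extreme terms $k=1$ and $k=p-1$ give exactly $m_2 m_p$, and for the intermediate terms the log-convexity (Hölder) inequality $m_a m_b\le m_2 m_p$ holds for $a+b=p+2$ with $2\le a,b\le p$. This yields
\[
\frac{d}{dt}m_p\le C_p(1+\norm{\M^2(f_R)})\,m_p .
\]
The lower-order corrections from $\xi_i$ are either nonpositive or controlled by the same bound. Gronwall's lemma then gives the asserted exponential estimate, with constant depending only on $p$ and $\norm[p]{f_0}$ once the zeroth moment bound from Step 1 is included.
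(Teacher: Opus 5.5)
Your proof is correct and follows the paper's argument. Zeroth and first moments are monotone because $\xi_{i,1}+\xi_{i,2}\le 0$. The second moments satisfy a Riccati-type inequality, closed by comparison on a short interval $[0,T_0]$. The $p$-th moment is controlled by Gronwall using $\K_i^{(R)}\le |z||z'|$.

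The only real difference is how the mixed terms are handled. The paper bounds $|z|^k|z'|^{p-k}\le |z||z'|^{p-1}+|z|^{p-1}|z'|$ pointwise by splitting into $|z|\le|z'|$ and $|z'|\le|z|$. You integrate first and use log-convexity of moments, $m_a m_b\le m_2 m_p$ for $a+b=p+2$. Your observation $|z+z'+\xi_i|\le|z|+|z'|$ also removes the lower-order correction terms that the paper carries along.
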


\begin{rem}
    Notice that the above inequality implies that the $ p $-th moment remains bounded as long as the second moment is finite. In particular, in the limit $ R\to \infty $ finite time blow-up is dictated by the blow-up of the second moments. 
\end{rem}
\begin{proof}[Proof of Lemma \ref{lem:TruncModelUnifBound}]
	Let us recall that we can use unbounded test functions due to the compact support of the kernel in the truncated model. Using this we first show that $ \norm[1]{f_R(t)}\leq \norm[1]{f_0} $ for all $ t\geq0 $. Indeed, we have with \eqref{eq:sec2:TruncEquation}
	\begin{align*}
		\int_{\S}(1+|z|) \, f_R(t,dz) &= \int_{\S}(1+|z|) \, f_0(dz)  
		\\
		&\quad+  \sum_{i=1}^3\dfrac{\alpha_i}{2}  \int_0^t\int_{\S} \int_{\S} \K_i^{(R)}(z,z')(\xi_{i,1}+\xi_{i,2}-1)  \, f_R (s,dz)f_R(s,dz')\, ds \leq \norm[1]{f_0}
	\end{align*}
	by the definition of $ \xi_i $ in \eqref{eq:sec1:DefXi}. For the second moments we have with $ \K_i^{(R)}(z,z')\leq C|z||z'| $
	\begin{align*}
		\norm{\M^2(f_R(t))} &\leq \norm{\M^2(0)} +C\sum_i\int_0^t\K_i^{(R)}(z,z')(|z|+1)(|z'|+1)\, f_R (s,dz)f_R(s,dz')\, ds
		\\
		&\leq \norm{\M^2(0)} + C\int_0^t\left( \norm{\M^2(f_R(s))}^2+1 \right) \, ds
	\end{align*}
	Hence, by a Gronwall argument there is $ T_0>0 $ and $ C>0 $ depending on $ \norm[2]{f_0} $ such that
	\begin{align*}
		\sup_{t\in[0,T_0]}\norm{\M^2(f_R(t))} \leq C.
	\end{align*}
	Concerning the $ p $-th moment we have 
	\begin{align*}
		\int_{\S}|z|^p \, f_R(t,dz) = \int_{\S}|z|^p \, f_0(dz)+ \sum_i\dfrac{\alpha_i}{2}  \int_0^t\int_{\S_i} \int_{\S_i} \K_i^{(R)}(z,z') \, \Delta_{i}\left[ |\cdot|^p \right]  \, f_R (s,dz) f_R(s,dz') \, ds.
	\end{align*}
	Using $ p\in \N $ one can show the inequality for $ i=1,2,3 $
	\begin{align*}
		\Delta_i\left[ |\cdot|^p \right](z,z') \leq C_p |z|^{p-1}\left( |z'|+1 \right) + C_p |z'|^{p-1}\left( |z|+1 \right) +C_p.
	\end{align*}
	For mixed terms $ |z'|^k|z|^j $ with $ 1\leq j,\, k\leq p-1 $ one can split into the cases $ |z|\leq|z'| $ and $ |z'|\leq|z| $. This yields with the bound $ \K_i^{(R)}(z,z')\leq C|z||z'| $
	\begin{align*}
		\norm[p]{f_R(t)} &\leq \norm[p]{f_0} + C_p\int_0^t \left[ \norm[p]{f_R(s)}\left( \norm{\M^2(f_R(s))}+\norm[1]{f_R(s)}  \right) + \norm[1]{f_R(s)}^2  \right] \, ds
		\\
		&\leq \norm[p]{f_0} + C\int_0^t \left[ \norm[p]{f_R(s)}\left( \norm{\M^2(f_R(s))}+1  \right) + 1  \right] \, ds.
	\end{align*}
	Gronwall's lemma then concludes the proof. 
\end{proof}

We can now give the proof to Theorem \ref{thm:WellPosed}.
\begin{proof}[Proof of Theorem \ref{thm:WellPosed}]
	We divide the proof into two steps.
	
	\textit{Step 1. Limit as $R \to \infty $.} Let $ (f_R)_R $ be the unique weak solution to the truncated model for $ R\geq0 $. We first show that there is a converging subsequence $ R_n\to\infty $. To this end, we first observe that by Lemma \ref{lem:TruncModelUnifBound} the set $ (f_R(t))_R $ is uniformly bounded in $ \M_{p,+} $ for all $ t\in [0,T_0] $. Here, $ T_0 $ is given in Lemma \ref{lem:TruncModelUnifBound}. In addition, $ (f_R)_R $ satisfies the following equicontinuity property with respect to time: for any $ \varphi\in C_b(\S) $ and $ t,\, s\in [0,T_0] $
	\begin{align*}
		\left| \dualbra{f_R(t)-f_R(s)}{\varphi}\right| &\leq \sum_i\alpha_i\int_s^t \K_i^{(R)}(z, z') |\Delta_i\varphi(z, z')| \,  f_R(r,dz) f_R(r,dz') \, dr 
		\\
		&\leq C\norm[\infty]{\varphi} \sup_{t\in[0,T_0]}\norm[1]{f(t)}^2|t-s|
		\\
		&\leq C\norm[\infty]{\varphi} \sup_{t\in[0,T_0]}\norm[1]{f_0}^2|t-s|.
	\end{align*}
	All in all, this implies that there is a subsequence $ R_0\to \infty $ such that $ f_{R_n} $ converges weakly to $ f\in C([0,T_0]; \Meas_{+}(\S))\cap L^\infty_{\loc}([0,T_0]; \Meas_{p,+}(\S)) $. Furthermore, by the uniform moment bounds on the time interval $ [0,T_0] $ we can pass to the limit $ R_n\to \infty $ in the weak form \eqref{eq:sec2:TruncEquation}. Consequently, $ f $ provides a weak solution to \eqref{eq:sec1:ContWeakForm}. In addition, by passing into the limit in the estimate in Lemma \ref{lem:TruncModelUnifBound} we obtain
	\begin{align}\label{eq:sec2:ProofWellPosBound}
		\norm[p]{f(t)}\leq  C\exp\left( C \int_0^t\left( 1+\norm{\M^2(s)}\right) \, ds \right).
	\end{align}
    
	\textit{Step 2. Extension.} Recall that by Lemma \ref{lem:Uniquenss} the solution constructed in Step 1 is unique. Furthermore, due to $ p\geq 3 $ and Proposition \ref{pro:MomentEquations}, the equations for the first and second moments are satisfied on $ [0,T_0] $. In particular, this implies the following a priori bounds: the first moment $ \M^1 $ is bounded for all times, while the second moment is finite on the time interval $ [0,T_*) $ by definition of $ T_* $ in \eqref{def:BlowUpTime}. From \eqref{eq:sec2:ProofWellPosBound} we infer that also the $ p $-th moment is finite on the time interval $ [0,T_*) $. This allows to extend the solution to the whole time interval $ [0,T_*) $.
	
	Statements (i)-(iii) in Theorem \ref{thm:WellPosed} follow from the preceding arguments. This concludes the proof.
\end{proof}


\section{Localization and self-similar asymptotics of coagulation models}\label{sec:LocSelfSimAsymp}
In this section we study the long-time behaviour of solutions to the rouleau coagulation equation. A first step is the study of the moments. More precisely, we study the asymptotic behaviour of the second and third moments close to the gelation time. Furthermore, we give a bound on the fourth moment needed for the subsequent arguments. This asymptotics in fact allows to identify the appropriate self-similar scaling to justify rigorously the scaling identified in Section \ref{subsec:scaling} via heuristic arguments. With respect to these self-similar variables we then show that the distribution function localizes along a line and approaches a self-similar profile.

\subsection{Study of moments}\label{subsec:StudyMoments}
In this section we study the first, second, third and fourth order moments for given $ \alpha\in \R_+^3 $. 

\subsubsection{First order moments} 
By Proposition \ref{pro:MomentEquations} the first moments satisfy the system
\begin{align}\label{eq:MomentEq1st}
	\dfrac{d}{dt} \M^1 =  \sum_{i =1}^3\frac{\alpha_i}{2}(\M^1)^\top K_i \M^1 \; \xi_i.
\end{align}
Since $ \xi_{i,1} + \xi_{i,2} \leq 0 $ for all $ i\in \{1,2,3\} $ we obtain the following uniform bound.
\begin{pro}\label{pro:1stOderMomentEquations}
		Let $ \alpha\in \R_{+}^3 $, $ f_0\in \M_{3, + }(\S) $ and $ f\in C^1([0,T_*); \Meas_{+})\cap L^\infty_{\loc}([0,T_*); \Meas_{3,+}) $ be the unique weak solution to \eqref{eq:sec1:ContWeakForm} on the maximal time interval $ [0,T_*) $. The solution $ \M^1 $ to \eqref{eq:MomentEq1st} satisfies
		\begin{align*}
			\norm{\M^1(t)} \leq C\norm{\M^1(0)},
		\end{align*}
		for all $ t\in [0,T_*) $ and some constant $ C>0 $.
\end{pro}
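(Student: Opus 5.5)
The plan is to exploit the sign structure of the first-moment ODE \eqref{eq:MomentEq1st}, exactly as already sketched in the proof of Proposition \ref{pro:MomentEquations}(i). By Theorem \ref{thm:WellPosed} the solution is $C^1$ in time on $[0,T_*)$ with $L^\infty_{\loc}$ bounds on third moments. Proposition \ref{pro:MomentEquations}(i) therefore applies on every compact subinterval $[0,T]\subset[0,T_*)$, so $\M^1\in C^1([0,T_*);\R^2)$ solves \eqref{eq:MomentEq1st}.

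Since $f(t)\geq0$ is supported in $\S=[2,\infty)^2$, both components $\M^1_1(t),\M^1_2(t)$ are nonnegative. The kernel matrices $K_i$ have nonnegative entries, and $\alpha_i\geq0$, so each scalar weight $\frac{\alpha_i}{2}(\M^1)^\top K_i\M^1$ is nonnegative. I then sum the two components of \eqref{eq:MomentEq1st}. Using $\xi_{1,1}+\xi_{1,2}=-1$, $\xi_{2,1}+\xi_{2,2}=-2$ and $\xi_{3,1}+\xi_{3,2}=-2$, this gives
\begin{align*}
\frac{d}{dt}\left(\M^1_1+\M^1_2\right)=\sum_{i=1}^3\frac{\alpha_i}{2}\,(\xi_{i,1}+\xi_{i,2})\,(\M^1)^\top K_i\M^1\leq 0 .
\end{align*}
Hence $0\leq |\M^1(t)|\leq|\M^1(0)|$ in the $1$-norm for all $t<T_*$.

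Finally, equivalence of the $1$-norm and the Euclidean norm on $\R^2$ ($\norm{z}\leq|z|\leq\sqrt2\norm{z}$) yields $\norm{\M^1(t)}\leq\sqrt2\,\norm{\M^1(0)}$, so the claim holds with $C=\sqrt2$, independent of $t$ and of $T_*$. The only point needing care is justifying the moment ODE up to $T_*$. This is handled by applying Proposition \ref{pro:MomentEquations} on each $[0,T]$ with $T<T_*$, using the local-in-time third-moment bound. No genuine obstacle is expected.
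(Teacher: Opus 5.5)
Your proof is correct and is essentially the paper's argument. The paper likewise sums the two components of \eqref{eq:MomentEq1st} and uses $\xi_{i,1}+\xi_{i,2}\leq 0$ together with $(\M^1)^\top K_i\M^1\geq 0$ to get $0\leq \M^1_1(t)+\M^1_2(t)\leq \M^1_1(0)+\M^1_2(0)$ (see the proof of Proposition \ref{pro:MomentEquations}(i)). Your explicit constant $C=\sqrt2$ and your remark about applying Proposition \ref{pro:MomentEquations} on each $[0,T]$ with $T<T_*$ only make explicit what the paper leaves implicit.
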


\subsubsection{Second order moments}
We now turn to the study of the second order moment equations, which are given by (see Proposition \ref{pro:MomentEquations})
\begin{align}\label{eq:MomentEq2nd}
	\dfrac{d}{dt}\M^2 = \sum_{i =1}^3\alpha_i(\M^2 + \xi_i \otimes \M^1 ) K_i (\M^2+ \M^1 \otimes \xi_i  )  - \sum_{i =1}^3\dfrac{\alpha_i}{2 } ( \xi_i \otimes \M^1 ) K_i  ( \xi_i \otimes \M^1 ).
\end{align}
This system is a matrix-valued Riccati equation. As is well-known this system can produce blow-ups in finite time. We now characterize the initial conditions $ f_0 $ that produce blow-ups.

\begin{pro}\label{pro:BlowUp}
	Let $\alpha\in \R_+^3 $, $ f_0\in \M_{3, + }(\S) $, $ f_0\neq0 $, and $ f\in C^1([0,T_*); \Meas_{+})\cap L^\infty_{\loc}([0,T_*); \Meas_{3,+}) $ be the unique weak solution to \eqref{eq:sec1:ContWeakForm} on the maximal time interval $ [0,T_*) $. Then, $ T_*<\infty $ if and only if one of the following holds
	\begin{enumerate}[(i)]
		\item $ \alpha_1>0 $ or $ \alpha_2>0 $,
		
		\item $ \alpha_1=\alpha_2=0 $, $ \alpha_3>0 $ and
		\begin{align}\label{eq:sec3:BlowUpCond}
			\int_{ \S } y(y-2) f_0(dz) >0.
		\end{align}
	\end{enumerate}
\end{pro}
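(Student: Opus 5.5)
The plan is to reduce the blow-up of the matrix $\M^2$ to the blow-up of a single scalar quadratic functional $Q(t)=\int_{\S}\varphi(z)\,f(t,dz)$. The functional will satisfy a scalar Riccati inequality $Q'\geq cQ^2$. By Theorem \ref{thm:WellPosed} the weak formulation may be tested with quadratic $\varphi$ on $[0,T_*)$, by the same truncation argument as in Proposition \ref{pro:MomentEquations}. Any such $Q$ with $0\leq \varphi\leq C|z|^2$ satisfies $Q\leq C\norm{\M^2}$, so blow-up of $Q$ forces $T_*<\infty$. The basic identity I would exploit is the following. With $a=x-2$, $b=x'-2$ (both $\geq 0$ on $\S$) and $\varphi(z)=x(x-2)$, one gets $\Delta_1\varphi=2ab=2(x-2)(x'-2)$. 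Likewise, for $\varphi(z)=y(y-2)$ one gets $\Delta_3\varphi=2(y-2)(y'-2)$.

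\emph{Case (ii), sufficiency.} Here $\alpha_1=\alpha_2=0$, and I take $Q(t)=\int y(y-2)\,f(t,dz)$. The weak formulation gives exactly
\begin{align*}
	Q'(t)=\alpha_3\int_{\S}\int_{\S} yy'(y-2)(y'-2)\,f(t,dz)f(t,dz') = \alpha_3 Q(t)^2 .
\end{align*}
Hence $Q(t)=Q(0)/(1-\alpha_3Q(0)t)$. By \eqref{eq:sec3:BlowUpCond} we have $Q(0)>0$, so $Q$ blows up at $1/(\alpha_3Q(0))$ and $T_*\leq 1/(\alpha_3 Q(0))<\infty$.

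\emph{Case (ii), necessity.} If \eqref{eq:sec3:BlowUpCond} fails, then $f_0$ is supported on $\{y=2\}$. Reaction $R_3$ between clusters with $y=2$ keeps $y=2$. By uniqueness (Lemma \ref{lem:Uniquenss}), $f(t)$ stays supported there, and we are in the situation of Remark \ref{rem:no gel}, with constant kernel $4$. I would close this by an explicit bound: on $\{y=2\}$ the kernel equals $4$, so $\frac{d}{dt}\int x^2 f\leq C(\int x f)^2+C$, and the first moments are bounded by Proposition \ref{pro:1stOderMomentEquations}. Hence the second moments grow at most linearly and $T_*=\infty$.

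\emph{Case (i), $\alpha_1>0$.} I take $\varphi=x(x-2)$ and check that $\Delta_2\varphi,\Delta_3\varphi\geq 0$. In the variables $a,b$, the $x$-components of the products are $a+b+3$ and $a+b+4$. This gives $\Delta_2\varphi=2ab+2a+2b+3\geq0$ and $\Delta_3\varphi=2ab+4a+4b+8\geq0$. Using $K_1=xx'$, this yields $Q'\geq 2\alpha_1\big(\int x(x-2)f\big)^2=2\alpha_1Q^2$. So blow-up follows as soon as $Q(t_0)>0$ for some $t_0$. That holds trivially unless $f_0$ is supported on $\{x=2\}$. If $\alpha_2>0$ or $\alpha_3>0$, the identities above show $\Delta_{2,3}\varphi\geq3$, so $Q'>0$ and $Q$ becomes positive immediately.

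\emph{Case (i), $\alpha_2>0$.} Products of $R_2$ increase $x$ by the same computation, so $Q=\int x(x-2)f$ becomes positive. However, $\K_2=\tfrac12(xy'+x'y)$ does not dominate $xx'$, so I would work with a combined functional such as $Q=\int\big(x(x-2)+y(y-2)+(x-2)(y-2)\big)f$. I would use $y'\geq 2$ to bound $\K_2(z,z')\Delta_2\varphi$ below by $c\,\big(\int (x-2)(y-2)+\dots\big)^2$.

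The main obstacle is twofold. First, one needs a genuinely quadratic lower bound $Q'\geq cQ^2$ in the pure $\alpha_2$ case. Second, one needs to handle the degenerate support $\{x=2\}$ when only $\alpha_1>0$. There $R_1$ preserves $x=2$, the kernel is the constant $4$, and I expect no blow-up, mirroring Remark \ref{rem:no gel}. So I would first check whether the statement of (i) implicitly excludes this configuration, or whether it needs the additional hypothesis $\int x(x-2)f_0>0$ when $\alpha_2=\alpha_3=0$.
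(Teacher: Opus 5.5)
Your arguments for case (ii), in both directions, and for $\alpha_1>0$ via $Q=\int x(x-2)f$ are correct, up to a harmless constant: the factor $\tfrac12$ in the weak form gives $Q'\geq \alpha_1 Q^2$.

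\textbf{The gap: the case $\alpha_2>0$ is not proved.} When also $\alpha_1>0$, your $x(x-2)$ argument covers it. When $\alpha_3>0$, the mirror argument with $y(y-2)$ works, since $\Delta_1[y(y-2)]\geq 15$ and $\Delta_2[y(y-2)]\geq 3$. But for $\alpha=(0,\alpha_2,0)$ you have no argument. The combined functional you suggest does not obviously close. Pairing $\K_2=\tfrac12(xy'+x'y)$ with $\Delta_2$ of $x(x-2)$ produces terms like $\int x(x-2)f\cdot\int (x-2)y\,f$. When the mass sits near $\{y=2\}$, this is only linear in $Q$ times first moments, so no bound $Q'\geq cQ^2$ follows from the structure.

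The missing idea is to test with the mixed moment $\varphi(z)=xy$, whose structure matches $\K_2$. On $\S$ one checks:
\begin{itemize}
\item $\Delta_1(xy)=y(x'-2)+y'(x-2)+x+x'-2\geq 2$;
\item $\Delta_3(xy)=x(y'-2)+x'(y-2)\geq 0$;
\item $\Delta_2(xy)=(x-1)(y'-1)+(x'-1)(y-1)-1\geq \tfrac18(xy'+x'y)$.
\end{itemize}
By AM--GM, $(xy'+x'y)^2\geq 4\,xy\,x'y'$, hence $\frac{d}{dt}\M^2_{12}\geq \frac{\alpha_2}{8}(\M^2_{12})^2$. Since $\M^2_{12}(0)\geq 4\|f_0\|>0$, this blows up. This is the paper's Case 2, and it shows that for $\alpha_2>0$ there are no exceptional initial data at all.

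\textbf{Your suspicion about $\{x=2\}$ is correct; it is a defect of the statement, not of your approach.} Take $\alpha=(\alpha_1,0,0)$ and, e.g., $f_0=\delta_{(2,2)}$. Your identity gives $Q'=\alpha_1Q^2$ with $Q(0)=0$, so $f(t)$ stays on $\{x=2\}$ and $\K_1\equiv 4$ there. The first moments stay bounded, $\M^2_{11}$ and $\M^2_{12}$ stay bounded, and $\M^2_{22}$ grows at most linearly. Hence $T_*=\infty$, although (i) holds.

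The paper's Case 1 misses this because its expansion of $\Delta_i(x^2)$ drops the factor $2$ in front of $(x+x')\xi_{i,1}$. Correctly, $\Delta_1(x^2)=2(x-2)(x'-2)-4$, which equals $-4$ at $x=x'=2$. So the claimed bound $\Delta_1\varphi\geq c_0(1+xx')$ is false.

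Your computation gives the right corrected statement: for $\alpha_2=0<\alpha_1$, $T_*<\infty$ if and only if $\alpha_3>0$ or $\int x(x-2)\,f_0(dz)>0$.
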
 
\begin{rem}
	Observe that \eqref{eq:sec3:BlowUpCond} fails if and only if
	\begin{align*}
		f_0(dz) = \mu(dx)\otimes \delta_{2}(dy)
	\end{align*} 
	for some measure $ \mu\in \Meas_+ $.
\end{rem}
\begin{proof}[Proof of Proposition \ref{pro:BlowUp}]
	We consider three cases.
	
	\textit{Case 1. $ \alpha_1>0 $.} We assume that $ \alpha_1>0 $. Consider the test function $ \varphi(z)=x^2 $. Let us first observe that
	\begin{align*}
		\Delta_i\varphi(z,z) = 2xx'+(x+x')\xi_{i,1} + \xi_{i,1}^2, \quad i=1,2,3.
	\end{align*}
	Considering all three cases for $ \xi_{i,1} $, see \eqref{eq:sec1:DefXi}, one can see that the minimum of $ \Delta_i\varphi(z,z) $ is attained for $ x=x'=2 $ and is positive. In particular, there is $ c_0>0 $ such that for all $ z,\, z'\in\S $
	\begin{align*}
		\Delta_i\varphi(z,z') \geq c_0(1+xx').
	\end{align*}
	Thus we obtain
	\begin{align*}
		\dfrac{d}{dt}\M^2_{11}(t) &= \sum_i\dfrac{\alpha_i}{2}\int_{\S}\int_{\S} \K_i(z,z')\, \Delta_i\varphi(z,z)\, f(t,dz)f(t,dz')
		\\
		&\geq \dfrac{c_0\alpha_1}{2}\int_{\S}\int_{\S} \K_1(z,z')\, (1+xx') f(t,dz)f(t,dz') \geq \dfrac{c_0\alpha_1}{2}\M^2_{11}(t)^2.
	\end{align*}
	Since $ \M^2_{11}(0)>0 $ a comparison argument shows that $ \M^2_{11}(t) $ blows up in finite time.
	
	\textit{Case 2. $ \alpha_2>0 $.} We now assume $ \alpha_2>0 $. In this case we look at $ \varphi(z)= xy $. Here, we get
	\begin{align*}
		\Delta_i\varphi(z,z') = xy'+x'y+(x+x')\xi_{i,2} + (y+y')\xi_{i,1} + \xi_{i,1}\xi_{i,2}, \quad i=1,2,3.
	\end{align*}
	The minimum of this is attained at $ (z,z')=(2,2,2,2) $. We obtain 
	\begin{align*}
		\Delta_1\varphi(z,z') \geq 2, \quad \Delta_2\varphi(z,z') \geq 1, \quad \Delta_3\varphi(z,z')\geq 0.
	\end{align*}
	Hence, there is $ c_0>0 $ such that for all $ z,\, z'\in\S $
	\begin{align*}
		\Delta_2\varphi(z,z') \geq c_0(1+xy'+x'y).
	\end{align*}
	This implies
	\begin{align*}
		\dfrac{d}{dt}\M^2_{12}(t) &\geq \dfrac{\alpha_2}{2}\int_{\S}\int_{\S} \K_2(z,z')\, \Delta_2\varphi(z,z)\, f(t,dz)f(t,dz')
		\\
		&\geq \dfrac{c_0\alpha_2}{2}\int_{\S}\int_{\S} (xy'+x'y)(1+xy'+x'y) \, f(t,dz)f(t,dz') \geq c_0\alpha_2\M^2_{12}(t)^2.
	\end{align*}
	Thus, $ \M^2_{12}(t) $ blows up in finite time.
	
	\textit{Case 3.} We now assume $ \alpha_1=\alpha_2=0 $ but $ \alpha_3>0 $. In this case, we compute explicitly the second moments. Using a time-change we can assume $ \alpha_3=1 $.  For the second moments we obtain, see Proposition \ref{pro:MomentEquations} (ii),
	\begin{align}
		\dfrac{d}{dt}\M^2 = (\M^2 + \xi_3 \otimes \M^1 ) K_3 (\M^2+ \M^1 \otimes \xi_3  )  - \frac{1}{2 } ( \xi_3 \otimes \M^1 ) K_3  ( \xi_3 \otimes \M^3 ).
	\end{align}
	and defining $ \Nmom=\M^2 + \M^1\otimes\xi_3 $ yields with the equations for the first moments
	\begin{align*}
		\dfrac{d}{dt}\Nmom = \Nmom^\top K_3\Nmom.
	\end{align*}
	Writing
	\begin{align*}
		\Nmom = \begin{pmatrix} a(t) & b(t) \\ c(t) & d(t) \end{pmatrix}
	\end{align*}
	yields the system
	\begin{align*}
		\dot a = c^2, \quad
		\dot b = cd, \quad
		\dot c = cd, \quad
		\dot d = d^2.
	\end{align*}
	The solution to this system is
	\begin{align*}
		a(t) = a_0+\dfrac{c_0^2\, t}{1-d_0 t},
		\quad b(t) = b_0+\dfrac{c_0 d_0\, t}{1-d_0 t}, \quad c(t) = \dfrac{c_0}{1-d_0 t}, \quad d(t) &= \frac{d_0}{1-d_0 t}.
	\end{align*}
	Thus, $ \Nmom(t) $ and hence $ \M^2(t) $ blows up in finite time if and only if $ d_0>0 $, i.e.
	\begin{align*}
		0<\M^2_{22}(0)-2\M^1_2(0) = \int_{\S} y(y-2) f_0(dz).
	\end{align*}
	This concludes the proof.
\end{proof}

The following lemma will be useful to characterize the behaviour at a blow-up.
\begin{lem}\label{lem:PrepBlowUp}
	Let $\alpha\in \R_+^3 $, $ f_0\in \M_{3, + }(\S) $ and $ f\in C^1([0,T_*); \Meas_{+})\cap L^\infty_{\loc}([0,T_*); \Meas_{3,+}) $ be the unique weak solution to \eqref{eq:sec1:ContWeakForm} on the maximal time interval $ [0,T_*) $. Assume that $ \alpha_1,\, \alpha_2>0 $. Then, one of the following holds:
	\begin{enumerate}[(i)]
		\item either $ \M^2_{11}(f(t)),\,  \M^2_{22}(f(t))\to \infty $ as $ t\to T_* $,
		
		\item or $ \sup_{t \in [0,T_*)}\left\lbrace \M^2_{11}(f(t)),\,  \M^2_{22}(f(t))\right\rbrace  < \infty $.
	\end{enumerate}
\end{lem}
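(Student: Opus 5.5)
The plan is to show first that both diagonal second moments are nondecreasing, so that each has a limit in $(0,\infty]$ as $t\to T_*$. It then remains to exclude the mixed situation in which one limit is infinite and the other finite. By Proposition \ref{pro:BlowUp}, $T_*<\infty$ since $\alpha_1>0$; in particular the only possible alternative to (ii) is a blow-up at a finite time.

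\emph{Step 1 (monotonicity).} As in Case 1 of the proof of Proposition \ref{pro:BlowUp}, $\Delta_i[x^2](z,z')>0$ on $\S^2$ for all $i$. For $\varphi(z)=y^2$ we have $\Delta_i\varphi=2yy'+(y+y')\xi_{i,2}+\xi_{i,2}^2$, and checking $\xi_{i,2}\in\{1,-1,-2\}$ with $y,y'\geq 2$ gives $\Delta_1\varphi\geq 17$, $\Delta_2\varphi\geq 5$, and $\Delta_3\varphi=2(y-1)(y'-1)+2\geq 4$. Since $\K_i\geq0$, the weak formulation (via Proposition \ref{pro:MomentEquations}) gives $\frac{d}{dt}\M^2_{11}\geq0$ and $\frac{d}{dt}\M^2_{22}\geq0$. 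Hence both limits exist, and it suffices to show that if one diverges then so does the other. I also record two auxiliary facts. First, $|\M^2_{12}|^2\leq \M^2_{11}\M^2_{22}$ by Cauchy--Schwarz. Second, $\M^1_1$ stays bounded below by a positive constant on $[0,T_*)$: every term in \eqref{eq:MomentEq1st} with $\xi_{i,1}<0$ carries a factor $\M^1_1$, so $\frac{d}{dt}\M^1_1\geq -C\M^1_1$ by Proposition \ref{pro:1stOderMomentEquations}, and Gronwall applies on the finite interval.

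\emph{Step 2 (coupled inequalities).} Assume by contradiction that $\M^2_{11}\to\infty$ while $\M^2_{22}\leq B$; the symmetric case is analogous. Then $|\M^2_{12}|\leq \sqrt{B\M^2_{11}}$, and the upper bound $\frac{d}{dt}\M^2\leq C(1+\norm{\M^2})^2$, read off from \eqref{eq:MomentEq2nd}, yields $\M^2_{11}(t)\geq c(T_*-t)^{-1}-1$. For lower bounds on the off-diagonal entry I use $\varphi=xy$. Writing $\Delta_1[xy]=(x-2)y'+(x'-2)y+x+x'-2$ and $\Delta_2[xy]=(x-1)(y'-1)+(x'-1)(y-1)-1\geq c(xy'+x'y)$, I obtain
\begin{align*}
\frac{d}{dt}\M^2_{12} \geq c\,\alpha_1\int\!\!\int xx'\big[(x-2)y'+x\big]\,ff' + c\,\alpha_2\,\M^2_{11}\M^2_{22}.
\end{align*}
Similarly, the $\alpha_1$ and $\alpha_2$ contributions to the equation for $\M^2_{22}$ give
\begin{align*}
\frac{d}{dt}\M^2_{22}\geq c\big((\M^2_{12})^2+\M^2_{12}\M^2_{22}\big).
\end{align*}
The aim is to combine these to force $\M^2_{22}\to\infty$, contradicting $\M^2_{22}\leq B$.

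\emph{Main obstacle.} The inequalities above, in their crude form, only give $\M^2_{12}\gtrsim\int^t\M^2_{11}\sim\log\frac1{T_*-t}$, which is too weak to make $\int^{T_*}(\M^2_{12})^2$ diverge. The key step is therefore a lower bound of the type $\frac{d}{dt}\M^2_{12}\geq c\,\M^2_{11}\M^2_{12}-C\M^2_{11}$. Such a bound would give exponential (Riccati-type) growth of $\M^2_{12}$ and hence a contradiction with $\M^2_{12}\leq\sqrt{B\M^2_{11}}$. I would try to get it from the term $xx'(x-2)y'$ by splitting $\{x\geq4\}$, where $x-2\geq x/2$, from $\{x<4\}$, whose contribution to $\M^2_{11}$ is bounded by $16\norm{f}$. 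This yields $\int xx'(x-2)y'\,ff'\geq \frac12\M^2_{11}\M^2_{12}-C\M^2_{12}$. Since $\M^2_{12}\leq\sqrt{B\M^2_{11}}$ is of lower order than $\M^2_{11}$, the linear error terms can then be absorbed, and comparison gives the contradiction. Alternatively, if this route fails, I would exploit the lower bound $\M^2_{11}\gtrsim (T_*-t)^{-1}$ inside $\frac{d}{dt}\M^2_{22}\geq c\,\M^2_{12}\M^2_{22}$ after establishing $\M^2_{12}\gtrsim(T_*-t)^{-1}$ by the same splitting.
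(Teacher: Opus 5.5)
The decisive step of your argument is never carried out, and as you set it up it does not close. A bound $\frac{d}{dt}\M^2_{12}\ge c\,\M^2_{11}\M^2_{12}-C\M^2_{12}$ only gives $\M^2_{12}\gtrsim \exp(c\int^t\M^2_{11}\,ds)\approx (T_*-t)^{-c\lambda}$ when $\M^2_{11}\approx \lambda (T_*-t)^{-1}$. Meanwhile $\M^2_{22}\le B$ only caps $\M^2_{12}\le \sqrt{B\M^2_{11}}\approx (T_*-t)^{-1/2}$. A contradiction therefore needs $c\lambda>1/2$, a comparison of constants rather than of orders. In the mixed scenario the right-hand side of the $\M^2_{11}$-equation is $\alpha_1(\M^2_{11})^2+O((\M^2_{11})^{3/2})$, so $\lambda=1/\alpha_1$. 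Your splitting $x-2\ge x/2$ on $\{x\ge 4\}$ produces $c=\alpha_1/2$, which is exactly the borderline $c\lambda=1/2$, where no contradiction with $\M^2_{12}\le\sqrt{B\M^2_{11}}$ follows. Three further problems compound this. Your lower bound $\M^2_{11}\ge c(T_*-t)^{-1}-1$ comes from the crude estimate $\frac{d}{dt}\M^2\le C(1+\norm{\M^2})^2$, so its constant is uncontrolled and cannot settle the comparison. You never prove the matching upper bound $\M^2_{11}\lesssim (T_*-t)^{-1}$ needed to evaluate $\sqrt{B\M^2_{11}}$. And your fallback, obtaining $\M^2_{12}\gtrsim (T_*-t)^{-1}$ ``by the same splitting'', would need $c\lambda\ge 1$. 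The paper argues differently. It derives coupled inequalities for the pair $(\M^2_{11},\M^2_{12})$ and concludes that these two blow up together, which is the form invoked in Proposition \ref{pro:SecondMomentsLocalization}. That is a qualitative statement and needs no comparison with $\sqrt{\M^2_{11}\M^2_{22}}$.

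The missing idea is to keep the exact leading coefficients and compare logarithmic derivatives. The $\alpha_1$-term factorizes as $\int\!\!\int xx'(x-2)y'\,ff'=(\M^2_{11}-2\M^1_1)\M^2_{12}$, and every other contribution to $\frac{d}{dt}\M^2_{12}$ is nonnegative. Hence $\frac{d}{dt}\M^2_{12}\ge \alpha_1(\M^2_{11}-2\M^1_1)\M^2_{12}$. On the other side, using $\M^2_{12}\le\sqrt{B\M^2_{11}}$, the full equation gives $\frac{d}{dt}\M^2_{11}\le \alpha_1(\M^2_{11})^2+\alpha_2\M^2_{11}\M^2_{12}+\alpha_3(\M^2_{12})^2\le \alpha_1(\M^2_{11})^2+C(\M^2_{11})^{3/2}$ for $\M^2_{11}\ge 1$. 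So $\frac{d}{dt}\log(\M^2_{12}/\M^2_{11})\ge -C(1+\sqrt{\M^2_{11}})$. This is integrable up to $T_*$, because $\frac{d}{dt}\M^2_{11}\ge \frac{\alpha_1}{2}(\M^2_{11})^2$ once $\M^2_{11}$ is large, giving $\int\sqrt{\M^2_{11}}\,dt\le \frac{2}{\alpha_1}\int (\M^2_{11})^{-3/2}\,d\M^2_{11}<\infty$. Thus $\M^2_{12}\ge c\,\M^2_{11}$ near $T_*$, contradicting $(\M^2_{12})^2\le B\M^2_{11}$. Two further points need fixing. First, the reverse mixed case is not literally symmetric. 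For $\alpha_3>0$ the same argument runs with $\alpha_3$ and $\M^2_{22}$. For $\alpha_3=0$ you must instead use $\frac{d}{dt}\M^2_{22}\lesssim(\M^2_{22})^{3/2}$, hence $\M^2_{22}\gtrsim (T_*-t)^{-2}$, then $\frac{d}{dt}\M^2_{12}\gtrsim \M^2_{22}$ and $\frac{d}{dt}\M^2_{11}\gtrsim \M^2_{12}-C$. Second, Step 1 is false as written. The correct formula is $\Delta_i[y^2]=2yy'+2\xi_{i,2}(y+y')+\xi_{i,2}^2$; the factor $2$ is dropped, as in Case 1 of Proposition \ref{pro:BlowUp}. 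Consequently $\Delta_1[x^2]=2(x-2)(x'-2)-4$ and $\Delta_3[y^2]=2(y-2)(y'-2)-4$ both equal $-4$ at the corner, and the diagonal moments need not be monotone. This is harmless: the only negative contributions are $-2\alpha_1(\M^1_1)^2$ in $\frac{d}{dt}\M^2_{11}$ and $-2\alpha_3(\M^1_2)^2$ in $\frac{d}{dt}\M^2_{22}$. So $\M^2_{jj}+Ct$ is nondecreasing and the limits still exist.
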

\begin{proof}
	We will show the following differential inequalities
	\begin{align*}
		\dfrac{d}{dt}\M^2_{11}(t) &\geq c\left( \M^2_{11}(t)^2+\M^2_{12}(t)^2 \right),
		\\
		\dfrac{d}{dt}\M^2_{12}(t) &\geq c\M^2_{11}(t) \M^2_{12}(t),
	\end{align*}
	for some constant $ c>0 $. This implies the claim. Indeed, this system shows that whenever $ \M^2_{11} $ blows up then also $ \M^2_{12} $ and vice verse. Note that always $ \M^2_{11}, \, \M^2_{12}>0 $ since $ f $ is supported in $ \S $. Thus, at the blow-up time $ T_*<\infty $ either both  $ \M^2_{11},\,\M_{12}^2 $ tend to infinity or both remain bounded. The latter occurs when $ \M_{22}^2 $ blows up before $ \M^2_{11} $.
	
	To prove the above inequalities, we use a similar argument as in the proof of Proposition \ref{pro:BlowUp}. For $ \varphi(z)=x^2 $ we obtain
	\begin{align*}
		\Delta_i\varphi(z,z') &= 2xx'+(x+x')\xi_{i,1} + \xi_{i,1}^2, \quad i=1,2,3.
		\\
		\Delta_1\varphi(z,z') &\geq 1,\quad \Delta_2\varphi(z,z') \geq 5 \quad \Delta_3\varphi(z,z')=2xx'.
	\end{align*}
	In particular, we get for some $ \delta>0 $
	\begin{align*}
		\Delta_1\varphi(z,z') &\geq \delta xx'.
	\end{align*}
	This yields
	\begin{align*}
		\dfrac{d}{dt}\M^2_{11}\geq \dfrac{1}{2}\delta\alpha_1(\M^2_{11})^2+\alpha_3(\M^2_{12})^2.
	\end{align*}
	On the other hand, for $ \varphi(z)=xy $ we have
	\begin{align*}
			\Delta_i\varphi(z,z') &= xy'+x'y+(x+x')\xi_{i,2} + (y+y')\xi_{i,1} + \xi_{i,1}\xi_{i,2}, \quad i=1,2,3.
			\\
			\Delta_1\varphi(z,z') &\geq 2, \quad \Delta_2\varphi(z,z') \geq 1, \quad \Delta_3\varphi(z,z')\geq 0.
	\end{align*}
	In particular, we have for some $ \delta>0 $
	\begin{align*}
		\Delta_1\varphi(z,z') \geq \delta(xy'+x'y).
	\end{align*}
	This implies
	\begin{align*}
		\dfrac{d}{dt}\M^2_{12}(t) \geq \dfrac{\delta\alpha_1}{2}\int_{\S}\int_{\S}xx'(xy'+x'y)\, f(t,dz)f(t,dz')\geq \delta\alpha_1 \M^2_{11}(t)\M^2_{12}(t).
	\end{align*}
	This concludes the proof.
\end{proof}

In the following we describe the structure of the second moments when a blow-up occurs.
\begin{pro}\label{pro:SecondMomentsLocalization}
	Let $ \alpha\in \R_+^3 $, $ f_0\in \M_{3, + }(\S) $ and $ f\in C^1([0,T_*); \Meas_{+})\cap L^\infty_{\loc}([0,T_*); \Meas_{3,+}) $ be the unique weak solution to \eqref{eq:sec1:ContWeakForm} on the maximal time interval $ [0,T_*) $. Let $ \M^2(t) $ be the matrix of second moments of $ f(t) $. Assume that $ T_*<\infty $, i.e. $ \M^2 $ blows-up in finite time. Then, there is a non-zero $ \theta\in \R_+^2 $ and some constant $ C>0 $ such that
	\begin{align*}
		\norm{\dfrac{d}{dt}\left[ (T_*-t)\M^2(t) \right] } \leq C,
		\quad
		\norm{(T_*-t)\M^2(t) -  \theta \otimes \theta} \leq C(T_*-t)
	\end{align*} 
	for all $ t\in [0,T_*) $. Furthermore, $ \theta $ satisfies $ \sum_{i=1}^3\alpha_i\theta^\top K_i\theta = 1 $.
\end{pro}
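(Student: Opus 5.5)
The plan is to reduce the Riccati equation \eqref{eq:MomentEq2nd} to a pure quadratic Riccati equation plus a bounded perturbation, and then to invert it. By Proposition \ref{pro:1stOderMomentEquations}, $\M^1$ is bounded on $[0,T_*)$, so we can write
\begin{align*}
	\dfrac{d}{dt}\M^2 = \M^2 K \M^2 + \M^2 A + A^\top \M^2 + B, \qquad K=\sum_{i=1}^3\alpha_i K_i,
\end{align*}
with $A(t)=\sum_i\alpha_i K_i(\M^1\otimes\xi_i)$ and $B(t)$ bounded and continuous on $[0,T_*]$. Since $\M^2(t)$ is symmetric positive semidefinite and $\operatorname{tr}\M^2\to\infty$ as $t\to T_*$, I will work with the inverse. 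To guarantee invertibility, I first note that $\M^2(t)$ is positive definite: it is a Gram matrix of $z$ under $f(t)$, and $\M^2_{11}\M^2_{22}>(\M^2_{12})^2$ unless $f(t)$ is supported on a line through the origin. That degenerate case can be handled by perturbing $\M^2$ by $\varepsilon I$, or by using the linearization trick below directly.

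The key device is the standard linearization of the matrix Riccati equation. Set $\M^2=-\dot U\,U^{-1}$-type representations, or more simply set $P(t)=(\M^2(t))^{-1}$, which solves the \emph{linear} equation
\begin{align*}
	\dfrac{d}{dt}P = -K - AP - PA^\top - PBP .
\end{align*}
Since $\M^2$ blows up, $P(t)\to P_*$ as $t\to T_*$, where $P_*$ is singular and positive semidefinite. Because $\M^2$ grows, $P$ stays bounded on $[t_0,T_*)$, so $\dot P$ is bounded, and hence $P$ is Lipschitz up to $T_*$. This gives $P(t)=P_*+(T_*-t)K+O((T_*-t)^2)$, using $\dot P(T_*)=-K-AP_*-P_*A^\top-P_*BP_*$. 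I will show that $P_*$ has rank exactly one: it cannot be zero, since $\M^2$ blows up at most like $(T_*-t)^{-1}$ by comparison with the scalar Riccati bound $\frac{d}{dt}\norm{\M^2}\le C(1+\norm{\M^2})^2$. Write $P_*=\lambda\, e\otimes e$ with $\norm{e}=1$, and let $e^\perp$ span its kernel. Expanding $P(t)$ in the basis $(e,e^\perp)$ and inverting the $2\times2$ matrix explicitly, the $e^\perp e^\perp$ entry is $(T_*-t)\,(e^\perp)^\top \dot P_* e^\perp+O((T_*-t)^2)$ with $(e^\perp)^\top\dot P_*e^\perp=-(e^\perp)^\top K e^\perp$. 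This yields
\begin{align*}
	(T_*-t)\M^2(t) = \frac{e^\perp\otimes e^\perp}{(e^\perp)^\top K e^\perp}+O(T_*-t).
\end{align*}
So $\theta=e^\perp/\sqrt{(e^\perp)^\top K e^\perp}$, which gives $\theta^\top K\theta=1$. The derivative bound follows similarly, since $(T_*-t)\M^2$ is a smooth function of $P$ and $t$ with bounded derivatives.

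Positivity of $\theta$ (choosing its sign) comes from $\M^2$ having nonnegative entries. The sign requirement $(e^\perp)^\top \dot P_* e^\perp<0$, i.e. $\theta^\top K\theta>0$, must also be checked, since otherwise $P$ would not approach a singular matrix from the positive definite side. This is the main obstacle: $K$ is only positive semidefinite on $\R_+^2$, and $\theta^\top K\theta$ could vanish, for instance if $\alpha=(0,0,\alpha_3)$ and $\theta\propto(1,0)$. I would handle it using the sign structure. Since $P(t)$ is positive definite and $P(t)\to P_*$, we have $(e^\perp)^\top P(t)e^\perp>0$, which tends to $0$. Hence its derivative must be nonpositive near $T_*$, and the strict inequality is excluded by a case analysis. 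When $\alpha_1$ or $\alpha_2$ is positive, Lemma \ref{lem:PrepBlowUp} forces the blow-up to involve $\M^2_{11}$, so $\theta_1>0$ and $\theta^\top K\theta\ge\alpha_1\theta_1^2>0$ or $\alpha_2\theta_1\theta_2>0$. In the case $\alpha_1=\alpha_2=0$ the explicit solution in Proposition \ref{pro:BlowUp} gives $\theta\propto(c_0,d_0)/\sqrt{d_0}$ directly, with $\theta^\top K_3\theta=\theta_2^2$.
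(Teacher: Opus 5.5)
Your reformulation through $P=(\M^2)^{-1}$ is the dual of the paper's representation $\M^2=UV^{-1}$ (indeed $P=VU^{-1}$). Your final inversion is also correct: if $P(t)\to P_*=\lambda\, e\otimes e$ with $\lambda>0$ and $\kappa:=(e^\perp)^\top K e^\perp>0$, then $(T_*-t)\M^2(t)=e^\perp\otimes e^\perp/\kappa+O(T_*-t)$. However, the two non-degeneracy facts that carry the statement are not established.

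\textbf{First gap: $P_*\neq 0$.} In the paper this is the claim that the residue $\M_*$ is not invertible.
\begin{itemize}
\item Your comparison $\frac{d}{dt}\norm{\M^2}\le C(1+\norm{\M^2})^2$ only gives $\norm{\M^2(t)}\gtrsim (T_*-t)^{-1}$. That is a \emph{lower} bound on the blow-up rate, not an upper one.
\item No rate bound can exclude $P_*=0$ anyway. If $P_*=0$ then $P(t)=(T_*-t)K+O((T_*-t)^2)$, so for positive definite $K$ you get $(T_*-t)\M^2(t)\to K^{-1}$. This blow-up has exactly order $(T_*-t)^{-1}$ but full rank.
\item The paper removes this case by structure, not by rates. $K^{-1}$ has off-diagonal entry $-\alpha_2/(2\det K)$, so nonnegativity of $\M^2$ forces $\alpha_2=0$. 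Then $\M^2_{12}$ stays bounded while $\M^2_{11}\to\infty$, contradicting $\frac{d}{dt}\M^2_{12}\ge c\,\M^2_{11}\M^2_{12}$ from Lemma~\ref{lem:PrepBlowUp}.
\end{itemize}
Some argument of this kind is missing from your proposal. Also, the expansion $P(t)=P_*+(T_*-t)K+O((T_*-t)^2)$ is only right when $P_*=0$. In general the linear coefficient is $K+AP_*+P_*A^\top+P_*BP_*$, although only its $e^\perp e^\perp$ entry, which equals $\kappa$, enters your computation.

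\textbf{Second gap: $\kappa>0$.} You correctly identify this as the main obstacle but do not resolve it.
\begin{itemize}
\item Since $\M^2/\operatorname{tr}\M^2\to e^\perp\otimes e^\perp$, you may take $e^\perp\in\R^2_+$. Then $\kappa=\alpha_1(e^\perp_1)^2+\alpha_2e^\perp_1e^\perp_2+\alpha_3(e^\perp_2)^2$ vanishes only if $e^\perp=(1,0)$ with $\alpha_1=0$, or $e^\perp=(0,1)$ with $\alpha_3=0$.
\item Your case analysis excludes neither. Lemma~\ref{lem:PrepBlowUp} assumes $\alpha_1,\alpha_2>0$, and even then it yields at most $\M^2_{11}\to\infty$. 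That does not imply $e^\perp_1>0$, because $\M^2_{11}$ can diverge more slowly than the dominant entry.
\item When $\alpha_1=0<\alpha_2$, knowing $e^\perp_1>0$ would not help, since the dangerous direction is precisely $e^\perp=(1,0)$.
\item Arguing with $\theta$ here is circular, because $\theta=e^\perp/\sqrt{\kappa}$ already presupposes $\kappa>0$.
\end{itemize}
What is needed is a dynamical argument, using the moment equations for $\M^2_{11},\M^2_{12},\M^2_{22}$, ruling out a blow-up dominated by a coordinate axis on which the corresponding diagonal entry of $K$ vanishes. The paper's pole-order step runs into the same condition $v^\top K v=0$ for a nonnegative $v$. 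Its inference $v\cdot k=0\Rightarrow v=0$ fails when $k$ is a coordinate vector (e.g.\ $K=\alpha_1K_1$ or $K=\alpha_3K_3$), so these axis cases need care in either route. Your explicit treatment of $\alpha=(0,0,\alpha_3)$ is fine, except that with $\alpha_3=1$ the normalized vector is $\theta=(c_0/d_0,1)$, not $(c_0,d_0)/\sqrt{d_0}$.

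\textbf{Smaller points.}
\begin{itemize}
\item The $P$-equation is not linear, because of the term $-PBP$. Boundedness of $P$ near $T_*$ (equivalently, a lower bound on the smallest eigenvalue of $\M^2$) therefore needs a proof, not ``$\M^2$ grows''.
\item When $\M^2(t)$ has rank one for all $t$, $P$ does not exist at all. An example is $\alpha=(0,\alpha_2,0)$ with $f_0$ supported on the diagonal. Perturbing by $\varepsilon I$ does not address this, whereas the paper's $\M^2=UV^{-1}$ with $V(0)=I$ avoids the issue entirely.
\end{itemize}
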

\begin{proof}
	We divide the proof into several steps.
	
	\textit{Step 1.} Let us observe that $ \M^2 $ is symmetric and non-negative definite. We now show that $ t\mapsto \M^2(t) $ is meromorphic on a neighbourhood of $ [0,T_*] $ in $ \C $. To this end, recall from Proposition \ref{pro:MomentEquations} the second moments $ \M^2 $ satisfy \eqref{eq:MomentEq2nd}. We can reformulate this in the form
	\begin{align}\label{eq:MomentEq2}
		\dfrac{d}{dt}\M^2 = \M^2K\M^2 + \M^2A+A^\top\M^2+B,
	\end{align}
	where $ A=A(t)\in\R^{2\times2} $ and $ B(t)\in\R^{2\times2} $ are depending on $ \M^1 $ and $ K=\sum_i\alpha_iK_i $. Furthermore, $ B $ is symmetric. Since $ \M^1 $ solves \eqref{eq:MomentEq1st} it can be extended to a analytic function on a neighbourhood of $ [0,T_*] $ in $ \C $, due to the bound in Proposition \ref{pro:1stOderMomentEquations}. In particular, $ A,\, B $ are analytic on a neighbourhood of $ [0,T_*] $ in $ \C $. We now show that $ \M^2 $ is meromorphic on a neighbourhood of $ [0,T_*] $ in $ \C $. To this end, we use the standard ansatz for Riccati type equations, i.e. we write $ \M^2(t) = U(t)V(t)^{-1} $. The matrices $ U, \, V $ satisfy the system
	\begin{align*}
		\frac{d}{dt}
		\begin{pmatrix}
			U \\
			V
		\end{pmatrix}
		=
		\begin{pmatrix}
			A^\top & B
			\\
			-K & -A
		\end{pmatrix}
		\begin{pmatrix}
			U \\
			V
		\end{pmatrix}, \quad \begin{pmatrix}
			U(0) \\
			V(0)
		\end{pmatrix} = \begin{pmatrix}
			\M^2(0) \\
			I
		\end{pmatrix}.
	\end{align*}
	Consequently, $ U,\ V $ are analytic on a complex neighbourhood of $ [0,T_*] $. Moreover, $ U(t)V(t)^{-1} $ solves \eqref{eq:MomentEq2}, hence by uniqueness $ \M^2(t)= U(t)V(t)^{-1} $ as long as $ V(t) $ is invertible. On the other hand, this formula shows that $ \M^2 $ is meromorphic on a complex neighbourhood of $ [0,T_*] $. 
	
	\textit{Step 2.} We now show that the blow-up of $ \M^2 $ at $ T_* $ is a simple pole. We do this by splitting into two cases.
	\begin{enumerate}[(i)]
		\item We first assume that $ K $ is invertible. Note that poles appear exactly when the matrix $ V(t) $ is not invertible and hence at the blow-up time $ T_* $ we have $ \det V(T_*)=0 $. Let $ v\in \R^2 $ be non-zero and such that $ V(T_*)v=0 $. Define now $ (x(t),y(t))= (U(t)v,V(t)v) $. We obtain the system
		\begin{align*}
			\frac{d}{dt}
			\begin{pmatrix}
				x \\
				y
			\end{pmatrix}
			=
			\begin{pmatrix}
				A^\top & B
				\\
				-K & -A
			\end{pmatrix}
			\begin{pmatrix}
				x \\
				y
			\end{pmatrix}, \quad \begin{pmatrix}
				x(0) \\
				y(0)
			\end{pmatrix} = \begin{pmatrix}
				\M^2(0)v \\
				v
			\end{pmatrix}.
		\end{align*}
		First of all notice that $ x(T_*)\neq 0 $, otherwise $ (x(t),y(t)) $ vanishes at $ t=T_* $. Since it solve the above homogeneous system we would have $ (x(t),y(t))\equiv0 $. This is not consistent with the initial condition $ (\M^2(0)v,v)\neq 0 $. We thus obtain at $ t=T_* $
		\begin{align}
			y'(T_*) = -Kx(T_*) \neq 0.
		\end{align}
		The latter follows from the fact that $ K $ has trivial kernel. Thus, $ y(t) = \mathcal{O}(t-T*) $ vanishes linearly in $ T_* $. In particular, we see that $ V(t) $ becomes singular only linearly. Consequently, the pole of $ \M^2(t) = U(t)V^{-1}(t) $ is simple.
		
		\item We now look at the case of non-invertible $ K $. Since $ K $ is symmetric, we can write $ K=\lambda k\otimes k $ for some $ k\in \R^2 $ and $ \lambda\in \R $. Since $ K $ has only non-negative entries we have $ \lambda>0 $ and $ k\in \R_+^2 $. Assume that $ \M^2 $ has a pole of order $ m\geq2 $ at $ t=T_* $. Then, the Laurent series expansion has the form
		\begin{align*}
			\M^2(t) = \dfrac{\M_m}{(T_*-t)^m} + R(t).
		\end{align*}
		Here, $ R(t) $ has a Laurent series with a pole of order at most $ m-1 $ at $ t=T_* $. Since $ \M^2 $ is a symmetric matrix with non-negative entries, this also holds form $ \M_m $. We then obtain by comparing coefficients in \eqref{eq:MomentEq2}, since $ m\geq2 $,
		\begin{align*}
			\M_mK\M_m = 0.
		\end{align*}
		This requires $ \M_m $ to be non-invertible. Thus, $ \M_m=v\otimes v $ for some vector $ v\in \R^2_+ $, since $ \M_m $ has only non-negative entries. Hence, we obtain with the form of $ K $
		\begin{align*}
			0 = v^\top Kv = \lambda (v\cdot k)^2.
		\end{align*}
		However, we have $ v,\, k\in \R_+^2 $ and $ v\cdot k=0 $. Thus, $ v=0 $ and hence also $ \M_m=0 $. This is contradicting that $ \M^2 $ has a pole of order $ m\geq2 $ at $ t=T_* $. Thus, the pole is of order $ m=1 $.
	\end{enumerate}
	
	\textit{Step 3.} We now prove the last assertions. Using again the Laurent series expansion around the blow-up time $ T_* $ we obtain
	\begin{align*}
		\M^2(t) = \dfrac{\M_{*}}{T_*-t}+R(t).
	\end{align*}
	Here, $ R $ is analytic around $ T_* $. Note that $ \M_{-1} $ is necessarily symmetric and non-negative definite, since $ \M^2 $ is. We use again the equation \eqref{eq:MomentEq2} and match terms in the Laurent series yielding
	\begin{align*}
		\M_{*} = \M_{*}K\M_{*}.
	\end{align*}
	If $ \M_{*} $ is not invertible then necessarily $ \M_{*}=\theta\otimes\theta $ for some $ \theta\in \R_{+}^2 $, since $ \M_{*} $ is symmetric with non-negative entries. Furthermore, we obtain from the preceding equation $ \theta^\top K\theta=1 $.
	 
	We now show that $ \M_{*} $ cannot be invertible. If this would be the case then $ K=\M_{*}^{-1} $. In particular, $ K $ has to be invertible too. Observe that then
	\begin{align*}
		\M_{*} =K^{-1}= \dfrac{1}{\alpha_1\alpha_3-\alpha_2^2}\begin{pmatrix}
			\alpha_3 & -\alpha_2 \\ -\alpha_2 & \alpha_1
		\end{pmatrix}.
	\end{align*}
	Since $ \M_{*} $ has non-negative entries we necessarily have $ \alpha_2=0 $ and $ \alpha_1,\, \alpha_2>0 $. However, this asymptotics then implies that
	\begin{align*}
		\M_{11}^2(t)\to \infty, \quad \M_{22}^2(t)\to \infty
	\end{align*}
	as $ t\to T_* $, but $ \sup_{t \in [0,T_*)} \M_{12}^2(t)<\infty $. This is a contradiction to Lemma \ref{lem:PrepBlowUp}. Thus, $ \M_{*} $ is not invertible. 
	
	The asymptotics
	\begin{align*}
		\M^2(t) = \dfrac{\theta\otimes\theta}{T_*-t}+R(t).
	\end{align*}
	then implies also that for some constant $ C>0 $ it holds
	\begin{align*}
		\norm{\dfrac{d}{dt}\left[ (T_*-t)\M^2(t) \right] } \leq C,
		\quad
		\norm{(T_*-t)\M^2(t) -  \theta \otimes \theta} \leq C(T_*-t)
	\end{align*} 
	for all $ t< T_* $. This concludes the proof.
\end{proof}

\subsubsection{Third order moments}
In the following result we also give the asymptotics of the third moments close to the gelation time.
\begin{pro}\label{pro:ThirdMomentsLocalization}
	Let $ \alpha\in \R_+^3 $, $ f_0\in \M_{4, + }(\S) $ and $ f\in C^1([0,T_*); \Meas_{+})\cap L^\infty_{\loc}([0,T_*); \Meas_{4,+}) $ be the unique weak solution to \eqref{eq:sec1:ContWeakForm} on the maximal time interval $ [0,T_*) $. Let $ \M^3(t) $ be the $ 3 $-tensor of third moments of $ f(t) $. Assume that $ T_*<\infty $ and let $ \theta\in \R^2_+ $ as in Proposition \ref{pro:SecondMomentsLocalization}. Then, there is are constants $ C>0 $ and $ c_0>0 $ such that for all $ t\in [0,T_*) $
	\begin{align*}
		\norm{(T_*-t)^3\M^3(t) -  c_0\theta \otimes \theta\otimes\theta} \leq C(T_*-t).
	\end{align*} 
\end{pro}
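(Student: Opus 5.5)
The plan is to derive a closed linear ODE for the tensor $\M^3$, pass to the logarithmic time $s=-\log(T_*-t)$, and read off the asymptotics from the spectrum of the limiting linear operator, using Proposition \ref{pro:SecondMomentsLocalization} for the coefficients.

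\textbf{Step 1: the third-moment equation.} I test \eqref{eq:sec1:ContWeakForm} with $\varphi(z)=z\otimes z\otimes z$. The cutoff argument of Proposition \ref{pro:MomentEquations} justifies this, because $f\in L^\infty_{\loc}([0,T_*);\Meas_{4,+})$. Expanding $\Delta_i\varphi(z,z')$ gives the leading part $3\P_3(z\otimes z\otimes z')+3\P_3(z\otimes z'\otimes z')$ plus terms containing at least one factor $\xi_i$. Integrating against $\K_i(z,z')=z^\top K_iz'$, the leading part produces $3\P_3(\M^3\cdot K\M^2)$, where $(\M^3\cdot K\M^2)_{jkl}=\M^3_{ajk}K_{ab}\M^2_{bl}$ and $K=\sum_i\alpha_iK_i$. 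The $\xi_i$-terms are of two kinds: terms linear in $\M^3$ with coefficients built from $K_i\M^1$, and terms bilinear in $\M^2$ (or involving $\M^1,\M^2$). Hence
\begin{align*}
\dfrac{d}{dt}\M^3 = 3\P_3\left(\M^3\cdot K\M^2\right) + A(t)\M^3 + G(t),\qquad \norm{A(t)}\leq C,\quad \norm{G(t)}\leq C(T_*-t)^{-2},
\end{align*}
where the bounds use Proposition \ref{pro:1stOderMomentEquations} and Proposition \ref{pro:SecondMomentsLocalization}.

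\textbf{Step 2: rescaling and spectral analysis.} I insert $\M^2=(T_*-t)^{-1}\theta\otimes\theta+R(t)$ with $R$ bounded (Proposition \ref{pro:SecondMomentsLocalization}) and set $\Nmom(s)=(T_*-t)^3\M^3(t)$. This gives
\begin{align*}
\dfrac{d}{ds}\Nmom = \Lop\Nmom + \mathcal{O}(e^{-s})\Nmom + \mathcal{O}(e^{-s}),\qquad \Lop\Nmom := -3\Nmom + 3\P_3\left((\Nmom\cdot u)\otimes\theta\right),\quad u:=K\theta .
\end{align*}
Since $\theta\cdot u=\theta^\top K\theta=1$, I choose $w$ with $w\cdot u=0$, which gives a basis $\theta,w$ of $\R^2$. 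The symmetric tensors $\P_3(\theta^{\otimes(3-k)}\otimes w^{\otimes k})$, $k=0,\dots,3$, then diagonalize $\Lop$ with eigenvalues $-k$. In particular $\ker\Lop=\operatorname{span}\{\theta^{\otimes3}\}$ and the rest of the spectrum lies in $\{-1,-2,-3\}$. The perturbation coefficients are integrable in $s$, so Gronwall gives $\sup_s\norm{\Nmom(s)}<\infty$. Projecting onto the kernel, the coefficient of $\theta^{\otimes3}$ has an integrable derivative of size $\mathcal{O}(e^{-s})$ and so converges to some $c_0\geq0$ at rate $e^{-s}=T_*-t$; the stable modes decay.

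\textbf{Step 3: positivity of $c_0$.} I take the scalar $m(t)=\int(u\cdot z)^3f(t,dz)$ and test with $\varphi=(u\cdot z)^3$, where $u\in\R^2_+$. The ODE gives $\dot m\geq 3 q\,m + 3\sum_i\alpha_i\int\int\K_i(z,z')(u\cdot z)^2(u\cdot z')\,ff' - C$, where $q\sim(T_*-t)^{-1}$ is the relevant contraction of $\M^2$. The middle term is bounded below by $c\,(\theta^\top K\theta)^2(T_*-t)^{-2}$ from the second-moment asymptotics. Comparing with $\dot h=3h/(T_*-t)+c(T_*-t)^{-2}$ gives $m\gtrsim (T_*-t)^{-3}$. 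Hence $c_0(u\cdot\theta)^3>0$, so $c_0>0$.

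\textbf{Main obstacle.} The hard part is the eigenvalue $-1$ mode, $\P_3(\theta\otimes\theta\otimes w)$. Forcing of size $e^{-s}$ in that mode naively gives a decay $s e^{-s}$, i.e. a logarithmic loss $(T_*-t)|\log(T_*-t)|$ instead of the claimed $C(T_*-t)$.

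I would first check whether the $e^{-s}$-order forcing actually has no component in that mode, which needs a more precise use of the Riccati structure of $R$. Failing that, I would use that $\M^1$ is analytic and $\M^2$ is meromorphic with a simple pole at $T_*$ (Steps 1–3 of Proposition \ref{pro:SecondMomentsLocalization}). The $\M^3$ system is then linear with a regular singular point at $T_*$, and a Frobenius-type expansion with exponents $-3,-2,-1,0$ would identify the precise correction. If a logarithmic term cannot be excluded this way, the rate in the statement may have to be weakened accordingly.
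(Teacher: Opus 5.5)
You have not proved the statement as given: the rate $C(T_*-t)$ is left open, and Step 3 does not establish $c_0>0$. Steps 1--2 follow the paper's route: the third-moment ODE, the leading operator obtained from $\M^2\approx(T_*-t)^{-1}\theta\otimes\theta$, logarithmic time, and spectrum $\{0,-1,-2,-3\}$. Your basis $\theta,w$ with $w\cdot K\theta=0$ even diagonalizes $\Lop$, whereas the paper's $\theta^\perp$-basis only triangularizes it.

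\textbf{The rate.} Your worry is legitimate. The paper's Duhamel step simply asserts $\norm{\bar W(\tau)}\le Ce^{-\tau}$, although the $-1$ mode is forced at order $e^{-\tau}$. The rate is nevertheless true, and the missing idea is the cancellation you suspected.

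First, the inhomogeneous term is $(T_*-t)^4G=\mathcal{O}(e^{-2s})$, not $\mathcal{O}(e^{-s})$.

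Second, the $\xi_i$-terms linear in $\M^3$ have coefficient exactly $A(t)^\top=\sum_i\alpha_i\,\xi_i\otimes K_i\M^1(t)$. This is the same matrix as in the Riccati equation $\frac{d}{dt}\M^2=\M^2K\M^2+\M^2A+A^\top\M^2+B$. With $\M^2=\frac{\theta\otimes\theta}{T_*-t}+R(t)$ and $R$ analytic at $T_*$, your linear perturbation is $E(s)=(T_*-t)\sum_{\text{slots}}\bigl(R(t)K+A(t)^\top\bigr)$.

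Insert the Laurent expansion into the Riccati equation and compare the coefficients of $(T_*-t)^{-1}$; the left-hand side has none. This gives $\theta\otimes v+v\otimes\theta=0$ with $v=R(T_*)K\theta+A(T_*)^\top\theta$, hence $v=0$. Therefore $E(s)\theta^{\otimes3}=3(T_*-t)\P_3\bigl(\theta\otimes\theta\otimes(R(t)K+A(t)^\top)\theta\bigr)=\mathcal{O}(e^{-2s})$.

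Write $\Nmom=n_0\theta^{\otimes3}+\Nmom_\perp$. Then $\frac{d}{ds}\Nmom_\perp=\Lop\Nmom_\perp+\mathcal{O}(e^{-s})\Nmom_\perp+\mathcal{O}(e^{-2s})$ and $\frac{d}{ds}n_0=\mathcal{O}(e^{-2s})+\mathcal{O}(e^{-s})\norm{\Nmom_\perp}$. Gronwall applied to $e^{s}\norm{\Nmom_\perp}$ gives $\norm{\Nmom_\perp}\le Ce^{-s}$. This is exactly $C(T_*-t)$, with no logarithm.

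\textbf{Positivity of $c_0$.} Step 3 does not work. In $\Delta_i[(u\cdot z)^3]$, the only source term of order $(T_*-t)^{-2}$ comes from the cross term $6(u\cdot z)(u\cdot z')(u\cdot\xi_i)$. It contributes
\[
3\sum_i\alpha_i(u\cdot\xi_i)(\M^2u)^\top K_i(\M^2u)\approx\frac{3(u\cdot\theta)^2}{(T_*-t)^2}\sum_i\alpha_i(u\cdot\xi_i)\,\theta^\top K_i\theta .
\]
For $u\in\R^2_+$ one has $u\cdot\xi_2<0$ and $u\cdot\xi_3\le0$. With $u=K\theta$, the sum is negative whenever only one $\alpha_i$ is nonzero. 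The ``middle term'' you quote is the leading term of $\dot m$, which you already counted as $3qm$. So the constant in your comparison ODE has no definite sign, and $c_0>0$ does not follow.

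The paper argues by contradiction instead. Cauchy--Schwarz gives $\norm{\M^2}^2\le C\norm{\M^1}\norm{\M^3}$, so $\norm{\Nmom(s)}\ge ce^{-s}$. If $c_0=0$, the analysis above yields $\Nmom=c_1e^{-s}\P_3(\theta\otimes\theta\otimes w)+o(e^{-s})$ with $c_1\neq0$. Contracting one slot with $K\theta$ gives $\frac{c_1}{3}(\theta\otimes w+w\otimes\theta)$, which is indefinite. But $\Nmom[\cdot,\cdot,K\theta]=(T_*-t)^3\int z\otimes z\,(z\cdot K\theta)\,f(t,dz)$ is positive semidefinite for every $t$, because $z\cdot K\theta\ge0$ on $\S$. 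This is the contradiction.
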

\begin{proof}
	We split the proof into two steps.
	
	\textit{Step 1. ODE system.} We first derive the ODE satisfied by the third moments. To this end, we can use as a test function $ \varphi_R(z)=\psi_R(|z|)(z\otimes z\otimes z) $ and let $ R\to \infty $. Here, we make use of the finite fourth moments, similarly as in the proof of Proposition \ref{pro:MomentEquations}. This allows to show that $ t\mapsto \M^3(f(t)) $ belongs to $ C^1([0,T]; (\R^2)^{\otimes 3} ) $. In order to identify the equation we make use of the tensor formulation. We observe that for $ \varphi(z)= z\otimes z\otimes z $ we have
	\begin{align*}
		\Delta_i\varphi(z,z') &= 3 \P_3(z' \otimes z \otimes z) + 3 \P_3(z \otimes z' \otimes z' ) + 3 \P_3(z \otimes z \otimes \xi_i ) + 3  \P_3(z' \otimes z' \otimes \xi_i ) 
		\\
		&\quad +  6 \P_3(z \otimes z' \otimes \xi_i ) + 3  \P_3(z \otimes \xi_i \otimes \xi_i ) + 3 \P_3( z' \otimes \xi_i \otimes \xi_i )  + \xi_i \otimes \xi \otimes \xi_i
	\end{align*}
	Making use of the quadratic form of the kernel we obtain an equation of the form
	\begin{align}\label{eq:sec3:ThirdMomEq}
		\dfrac{d}{dt}\M^3 = \mathcal B (\M^3, \M^2 ) + \mathcal R_1 (\M^3, \M^1 ) +  \mathcal R_2 (\M^2, \M^2 ) + \mathcal R_3 (\M^2, \M^1 ) + \mathcal R_4 (\M^1, \M^1 )
	\end{align}
	Here, $ \mathcal B $ as well as $ \mathcal R_j $ for $ j=1,2,3,4 $ are bilinear operators with values in $ (\R^2)^{\otimes3}_{\sym} $. In the following only the precise form of the first operator is relevant. It has the form
	\begin{align*}
		\mathcal B (\M^3, \M^2 )_{k\ell m} = \sum_{ a,b=1}^2 K_{ab} \left(  \M^2_{ka } \M^3_{b\ell m} + \M^{2}_{\ell a } \M^3_{bmk} + \M^2_{ma} \M^3_{b\ell k } \right), \quad k,\ell,m \in \{ 1, 2\},
	\end{align*}
	where $ K=\sum_{i =1}^3\alpha_iK_i $. We can rewrite this as
	\begin{align*}
		\mathcal{B} (\M^3, \M^2 ) = 3\P_3 \mathcal{A}(\M^3, \M^2),
	\end{align*}
	where $ \mathcal{A} : (\R^2)^{\otimes3} \times (\R^2)^{\otimes2} \to (\R^2)^{\otimes3}  $ is given by
	\begin{align*}
		\mathcal{A}(J,T)_{k\ell m} = \sum_{ a,b=1}^2 K_{ab} J_{ka } T_{b\ell m}.
	\end{align*}

	The most relevant contribution on the right hand side in \eqref{eq:sec3:ThirdMomEq} is the first term when $ \M^2 $ is replaced by $ (T_*-t)^{-1}\theta\otimes\theta $. Recall that $ \mathcal{N}^2(t) := \M^2(t)-(T_*-t)^{-1}\theta\otimes\theta $ is uniformly bounded on $ [0,T_*) $ by Proposition \ref{pro:SecondMomentsLocalization}. We then define
	\begin{align*}
		\Lop\M^3 = B(\M^3, \theta\otimes\theta).
	\end{align*}
	For the remaining terms in \ref{pro:SecondMomentsLocalization} we obtain the bounds
	\begin{align*}
		\norm{\mathcal{B}\left(\M^3,\M^2-(T_*-t)^{-1}\theta\otimes\theta\right)} &\leq C \norm{\M^3}, \quad \norm{\mathcal R_1 (\M^3, \M^1 )} \leq C \norm{\M^3},
		\\
		\norm{\mathcal R_2 (\M^2, \M^2 )} &\leq \dfrac{C}{(T_*-t)^2},
		\norm{\mathcal R_3 (\M^2, \M^1 )} \leq \dfrac{C}{T_*-t}, \quad \norm{\mathcal R_4 (\M^1, \M^1 )}\leq C
	\end{align*}
	for all $ t\in [0,T_*) $.

	We now compute a coordinate representation of the operator $ \mathcal{L} $. To this end, we use the following basis for $ (\R^2)^{\otimes 3} $
	\begin{align*}
		\B_1:=\theta^{\otimes 3}, \quad \B_2:=\P_3\left(\theta\otimes \theta \otimes \theta^\perp \right), \quad \B_3:=\P_3\left(\theta\otimes \theta^\perp \otimes \theta^\perp \right), \quad \B_4:=(\theta^\perp)^{\otimes 3}.
	\end{align*}
	Let us also define $ \beta:=\theta^\top K \theta^\perp $. Recall that $ \theta^\top K \theta=1 $ by Proposition \ref{pro:SecondMomentsLocalization}. With this one can see that
	\begin{align*}
		\Lop \B_1 = 3\B_1, \quad \Lop \B_2 = \beta \B_1 + 2\B_2, \quad \Lop \B_3 = 2\beta \B_2 + \B_3, \quad \Lop \B_4 = 3\beta \B_3.
	\end{align*}
	Thus, with respect to this basis we can identify $ \Lop $ with the matrix
	\begin{align*}
		\hat{\Lop} := \begin{pmatrix}
			3 & \beta & 0 & 0
			\\
			0 & 2 & 2\beta & 0
			\\
			0 & 0 & 1 & 3\beta
			\\
			0 & 0 & 0 & 0
		\end{pmatrix}
	\end{align*}
	Note that this matrix has the eigenvalues $ 0,1,2,3 $.
	
	Let us denote by $ V(t)\in \R^4 $ the coordinate representation of $ \M^3(t) $ with respect to the above basis. The above arguments yields the following equation
	\begin{align}\label{eq:sec3:ProofThirdMoments}
		\dfrac{d}{dt}V = \dfrac{1}{T_*-t}\hat{\Lop}V + \hat{B}V + \hat{R},
	\end{align}
	where $ \hat{B}(t),\,  \hat{R}(t)\in \R^{4\times 4} $ satisfy
	\begin{align*}
		\norm{\hat{B}(t)}\leq C, \quad \norm{\hat{R}(t)}\leq \dfrac{C}{(T_*-t)^2}.
	\end{align*}
	Using now the time-change $ t(\tau)=T_*(1-e^{-\tau}) $ and writing $ V(t(\tau))e^{-3\tau} $ in the basis formed by the eigenvectors of $ \hat{\Lop} $ yields the equation
	\begin{align}\label{eq:sec3:Proof3rdMomODE}
		\dfrac{d}{d\tau}W = \bar{\Lop}W + \bar{B}(\tau)W + \bar{R}(\tau).
	\end{align} 
	Here, $ W(\tau) $ are the coordinates of $ V(t(\tau))e^{-3\tau} $ in this basis, $ \bar{\Lop} = \operatorname{diag}\{0,-1,-2,-3\} $ and the matrices $ \bar{B},\, \bar{R} $ satisfy the bounds
	\begin{align*}
		\norm{\bar{B}(\tau)} \leq Ce^{-\tau},\quad \norm{\bar{R}(\tau)} \leq Ce^{-2\tau}.
	\end{align*}
	
	\textit{Step 2. Study of Asymptotics.}  We now show that $ \M^3(t) $ blows up like $ c_0(T_*-t)^{-3}\B_1 $ at $ t=T_* $. Note that $ \B_1 $ corresponds to the eigenvector to the eigenvalue $ 0 $ of $ \bar{\Lop} $. Here, $ c_0>0 $ is some constant.
	
	To this end, we first show that $ \M^3(t) $ necessarily blows up at $ t=T_* $. This follows from the following inequality relating moments of different order
	\begin{align*}
		\norm{\M^2(t)} \leq C \norm{\M^1(t)}^{1/2}\norm{\M^3(t)}^{1/2}.
	\end{align*}
	Since $ \norm{\M^1(t)}>0 $ for $ t\in [0,T_*] $ we conclude from the asymptotics of the second moments that 
	\begin{align*}
		\norm{\M^3(t)} \geq \dfrac{c}{(T_*-t)^2}.
	\end{align*}
	Writing this in terms of $ W(\tau) $ yields $ \norm{W(\tau)}\geq ce^{-\tau} $ for some constant $ c>0 $. 
	
	We now study the solution $ W $ to the ODE system \eqref{eq:sec3:Proof3rdMomODE}. Due to the bounds on $ \bar{B},\, \bar{R} $ we obtain
	\begin{align*}
		\sup_{ \tau \in [0,\infty )} \norm{W(\tau)}<\infty.
	\end{align*}
	Using Duhamel's formula we obtain
	\begin{align*}
		W(\tau) = W^\infty + \bar{W}(\tau), \quad \norm{\bar{W}(\tau)} \leq Ce^{-\tau}.
	\end{align*}
	Using this expression in \eqref{eq:sec3:Proof3rdMomODE} implies that $ \bar{\Lop}W^\infty=0 $, i.e.  $ W^\infty=c_0(1,0,0,0)^\top $.
	
	If $ c_0\neq 0 $, we conclude that 
	\begin{align*}
		\norm{(T_*-t)^3\M^3(t)-c_0\B_1} \leq C(T_*-t).
	\end{align*} 
	This in particular shows that $ c_0>0 $ since $ \M_{3} $ has only non-negative components $ \M^3_{k\ell m}\geq0 $ for all $ k,\, \ell,\, m \in \{1,2\} $.
	
	We now assume for contradiction that $ c_0=0 $. In particular, we obtain $ W(\tau) =\bar{W}(\tau) $. We now consider only the first component in \eqref{eq:sec3:Proof3rdMomODE} yielding upon integration
	\begin{align*}
		W_1(\tau) = W_1(0) + \int_0^\tau \left( (\bar{B}(\sigma)W(\sigma))_1 + \bar{R}_1(\sigma)\right) \, d\sigma = -\int_\tau^\infty \left( (\bar{B}(\sigma)\bar{W}(\sigma))_1 + \bar{R}_1(\sigma)\right) \, d\sigma.
	\end{align*}
	Here, we used that $ W^\infty_1=0 $. Due to the estimate available for $ \bar{W} $ we conclude that
	\begin{align*}
		|W_1(\tau) | \leq Ce^{-2\tau}.
	\end{align*}
	Writing $ U(\tau) = e^\tau(W_2(\tau),W_3(\tau),W_4(\tau)) $ yields then the equation
	\begin{align*}
		\dfrac{d}{d\tau} U = \tilde{\Lop}U + \tilde{B}(\tau) U + \tilde{R}(\tau).
	\end{align*}
	Here, $ \tilde{\Lop} = \operatorname{diag}\{0,-1,-2\} $,  $ \tilde{B} $ is restricted to the last three components and $ \norm{\tilde{R}(\tau)}\leq Ce^{-\tau} $.  Note that the terms containing $ W_1 $ are absorbed in $ \tilde{R} $. As before this yields
	\begin{align*}
		U(\tau) = U^\infty + \bar{U}(\tau), \quad \norm{\bar{U}(\tau)} \leq Ce^{-\tau}
	\end{align*}
	with $ U^\infty = c_1(1,0,0) $. In particular, we obtain
	\begin{align*}
		W(\tau) = c_1e^{-\tau}(0,1,0,0)^\top + \tilde{W}(\tau), \quad \norm{\tilde{W}(\tau)} \leq Ce^{-2\tau}.
	\end{align*}
	Since we know a priori that $ \norm{W(\tau)}\geq ce^{-\tau} $, we concluded that $ c_1\neq 0 $. Formulating this in terms of the second moments yields
	\begin{align}\label{eq:sec3:ProofThirdMoments2}
		(T_*-t)^2\M^3(t) \to c_1 \, \Xi, \quad \Xi := \beta\B_1-3\B_2.
	\end{align}
	Note that $ \Xi $ is the eigenvector with eigenvalue $ 2 $ to $ \hat{\Lop} $ which corresponds to the eigenvector with eigenvalue $ -1 $ to $ \bar{\Lop} $.
	
	We now show that \eqref{eq:sec3:ProofThirdMoments2} cannot hold, since $ \M^3 $ satisfies some positive definiteness properties. Indeed, the matrix 
	\begin{align*}
		(k,\ell)\mapsto \sum_{ a,b=1} \M^3(t)_{k\ell a} K_{ab}\theta_b = \int_{ \S_i} z_kz_\ell \left( z^\top K\theta \right)  \, f(t,dz)
	\end{align*}
	is non-negative definite for all $ t\in [0,T_*) $, since $ z^\top K\theta\geq0 $. Recall that $ \theta\in \R_{+}^2 $ and $ K $ has only non-negative entries. But then \eqref{eq:sec3:ProofThirdMoments2} would imply the same for
	\begin{align*}
		(k,\ell)\mapsto c\sum_{ a,b=1} \Xi_{k\ell a} K_{ab}\theta_b &= c\left[\beta (\theta\otimes\theta)_{k\ell} - \dfrac{1}{2}\left( \beta (\theta\otimes\theta)_{k\ell} + (\theta\otimes\theta^\perp)_{k\ell} +(\theta^\perp\otimes\theta)_{k\ell} \right)  \right]  
		\\
		&= \dfrac{c}{2}\left[\beta(\theta\otimes\theta)_{k\ell} - (\theta\otimes\theta^\perp)_{k\ell} -(\theta^\perp\otimes\theta)_{k\ell}  \right].
	\end{align*}
	This matrix has the following form with respect to the basis $ \{\theta,\theta^\perp\} $
	\begin{align*}
		\dfrac{c}{2}\begin{pmatrix}
			\beta & - 1
			\\
			-1 & 0
		\end{pmatrix}.
	\end{align*}
	This matrix has determinant $ -1 $ and hence is not positive definite for any choice $ c\neq 0 $. This yields a contradiction.
	
	We hence infer our assumption $ c_0=0 $ cannot hold so that the conclusion of $ c_0\neq0 $ holds. This concludes the proof.
\end{proof}

\subsubsection{Fourth order moments}
In the following we study the fourth order moments. In this case only an upper bound is proven rather than an asymptotics (as was done for the second and third order moments).

\begin{pro}\label{pro:ForthMomentsBound}
	Let $ \alpha\in \R_+^3 $, $ f_0\in \M_{4, + }(\S) $ and $ f\in C^1([0,T_*); \Meas_{+})\cap L^\infty_{\loc}([0,T_*); \Meas_{4,+}) $ be the unique weak solution to \eqref{eq:sec1:ContWeakForm} on the maximal time interval $ [0,T_*) $. Let $ \M^4(t) $ be the $ 4 $-tensor of fourth moments of $ f(t) $. Assume that $ T_*<\infty $. Then, there is a constant $ C>0 $ such that for all $ t\in [0,T_*) $
	\begin{align*}
		\norm{(T_*-t)^5\M^4(t)} \leq C.
	\end{align*} 
\end{pro}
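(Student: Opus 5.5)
We follow the strategy of Proposition \ref{pro:ThirdMomentsLocalization}, but only aim for an upper bound, so no spectral fine structure is needed. \textbf{First step: derive the moment equation.} Using the test function $\varphi_R(z)=\Psi_R(|z|)\,z^{\otimes 4}$ in \eqref{eq:sec1:ContWeakForm} and letting $R\to\infty$, we obtain an ODE for $\M^4$. To justify the limit we will either use moments of order five, or work with the truncated model and pass to the limit in the final estimate, as in the proof of Theorem \ref{thm:WellPosed}. Expanding $\Delta_i\varphi$ for $\varphi(z)=z^{\otimes 4}$ and using the bilinear form of $\K_i$ gives
\begin{align*}
\dfrac{d}{dt}\M^4 = \mathcal{B}_4(\M^4,\M^2) + \mathcal{C}(\M^3,\M^3) + \mathcal{R},
\end{align*}
with the following pieces.
\begin{itemize}
\item[$\bullet$] $\mathcal{B}_4(\M^4,\M^2)=4\P_4\mathcal{A}_4(\M^2,\M^4)$ comes from the terms $z'\otimes z^{\otimes 3}$. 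Here $\mathcal{A}_4(J,T)_{k\ell m n}=\sum_{a,b}K_{ab}J_{ka}T_{b\ell m n}$.
\item[$\bullet$] $\mathcal{C}$ is bilinear in third moments and comes from $z^{\otimes2}\otimes z'^{\otimes2}$.
\item[$\bullet$] $\mathcal{R}$ collects the terms involving $\xi_i$. It is bounded by $C(\norm{\M^4}+\norm{\M^3}\norm{\M^2}+\norm{\M^3}+\norm{\M^2}^2+1)$, using Proposition \ref{pro:1stOderMomentEquations}.
\end{itemize}

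\textbf{Second step: insert the known asymptotics.} By Propositions \ref{pro:SecondMomentsLocalization} and \ref{pro:ThirdMomentsLocalization} we have $\M^2=(T_*-t)^{-1}\theta\otimes\theta+O(1)$ and $\norm{\M^3}\le C(T_*-t)^{-3}$. Hence $\norm{\mathcal{C}(\M^3,\M^3)}\le C(T_*-t)^{-6}$, and all other inhomogeneous terms are $O((T_*-t)^{-4})$. Setting $\Lop_4 T:=\mathcal{B}_4(T,\theta\otimes\theta)$ we get
\begin{align*}
\dfrac{d}{dt}\M^4=\dfrac{1}{T_*-t}\Lop_4\M^4 + O(1)\M^4 + O((T_*-t)^{-6}).
\end{align*}
Now compute $\Lop_4$ in the basis $\P_4(\theta^{\otimes(4-j)}\otimes(\theta^\perp)^{\otimes j})$, $j=0,\dots,4$. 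Exactly as for $\hat{\Lop}$ in the proof of Proposition \ref{pro:ThirdMomentsLocalization}, and using $\theta^\top K\theta=1$, the matrix is upper triangular with eigenvalues $4,3,2,1,0$. Its off-diagonal entries are multiples of $\beta$.

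\textbf{Third step: the largest eigenvalue, the main point.} The largest eigenvalue is $4<5$. Pass to $\tau$ with $t=T_*(1-e^{-\tau})$ and set $W(\tau)$ to be the coordinates of $e^{-5\tau}\M^4(t(\tau))$ in an eigenbasis of $\Lop_4$; note $T_*-t=T_*e^{-\tau}$, so $dt=(T_*-t)\,d\tau$. Then
\begin{align*}
\dfrac{dW}{d\tau}=\bar{\Lop}_4W+\bar{B}(\tau)W+\bar{R}(\tau),
\end{align*}
with the following terms.
\begin{itemize}
\item[$\bullet$] $\bar{\Lop}_4=\operatorname{diag}\{-1,-2,-3,-4,-5\}$, strictly negative because $4<5$.
\item[$\bullet$] $\norm{\bar{B}(\tau)}\le Ce^{-\tau}$.
\item[$\bullet$] $\bar{R}(\tau)=e^{-5\tau}(T_*-t)\,O((T_*-t)^{-6})=O(1)$, so $\bar{R}$ is bounded.
\end{itemize}
Duhamel's formula together with Gronwall's lemma, using $\int_0^\infty\norm{\bar{B}}<\infty$, then gives $\sup_\tau\norm{W(\tau)}<\infty$. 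This is exactly $\norm{(T_*-t)^5\M^4(t)}\le C$.

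The main obstacle is the bookkeeping in the first and second steps. We must check that the coefficient of $\M^4$ multiplied by $\M^2$ is exactly $4\P_4\mathcal{A}_4$, so that the top eigenvalue is $4$ and not something $\ge5$. We must also check that the cross term $\mathcal{C}(\M^3,\M^3)$ scales like $(T_*-t)^{-6}$ and not worse. If instead an eigenvalue equal to $5$ appeared, the Duhamel estimate would produce a logarithmic loss.
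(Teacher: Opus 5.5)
\textbf{Gap: the reduction from fourth to fifth moments.} Your core estimate is correct and matches the paper's Steps 1--2. The ingredients are the same:
\begin{itemize}
\item the $\M^4$-equation with leading part $\frac{1}{T_*-t}\Lop_4\M^4$, where $\Lop_4=4\P_4\mathcal{A}_4(\,\cdot\,,\theta\otimes\theta)$ has diagonal entries $4,3,2,1,0$;
\item forcing of size $(T_*-t)^{-6}$ coming from the $\M^3\otimes\M^3$ term;
\item the observation that $4<5$.
\end{itemize}
Your rescaling by $e^{-5\tau}$ in an eigenbasis is only a tidier version of the paper's Gronwall argument with $\norm{e^{\hat{\Lop}\tau}}\leq Ce^{4\tau}$. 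The problem is your first step. The hypothesis is only $f_0\in\Meas_{4,+}(\S)$, and, as you note, removing the cutoff in $\Psi_R(|z|)z^{\otimes 4}$ needs fifth moments. So the ODE for $\M^4$ is not available for $f$ itself. Neither remedy you propose closes this:
\begin{itemize}
\item Fifth moments are simply not assumed.
\item The truncated model does not help. For fixed $R$ its solution has bounded second moments for all times. Its moments also do not satisfy the closed moment system, because $\K_i^{(R)}$ is not bilinear. Hence Propositions \ref{pro:SecondMomentsLocalization} and \ref{pro:ThirdMomentsLocalization} cannot be applied to it.
\item The only $R$-uniform bound you have is Lemma \ref{lem:TruncModelUnifBound}. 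After $R\to\infty$ it gives at best $\exp\bigl(C\int_0^t\norm{\M^2(s)}\,ds\bigr)\sim (T_*-t)^{-C\norm{\theta}^2}$, with an uncontrolled exponent rather than $5$.
\end{itemize}

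\textbf{The missing idea (Step 3 of the paper).} Approximate $f_0$ weakly by $f_0^n\in\Meas_{5,+}(\S)$ whose moments of orders $1,\dots,4$ coincide exactly with those of $f_0$. One way is to restrict to $[0,n]^2$ and add finitely many Dirac masses with weights fixed by a Carath\'eodory-type argument. Since $\M^1,\dots,\M^4$ solve a closed ODE system determined by their initial values, everything in your argument is literally identical for every $n$: $T_*$, $\theta$, $c_0$, all constants, and even $\M^4(f^n(t))$ itself. Your estimate therefore holds for $f^n$ uniformly in $n$. It remains to show $f^n(t)\to f(t)$ weakly for each $t<T_*$. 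This follows from compactness, using the uniform moment bounds on compact subintervals, together with uniqueness (Lemma \ref{lem:Uniquenss}). You then pass the bound to $f$ by lower semicontinuity of $\int z_jz_kz_\ell z_m\, f(t,dz)$, whose integrand is nonnegative on $\S$. A naive approximation without moment matching, such as $f_0\ind_{[0,n]^2}$, would move $T_*$. You would then have to prove continuity of the blow-up time and of all constants in Propositions \ref{pro:SecondMomentsLocalization} and \ref{pro:ThirdMomentsLocalization} with respect to the initial moments, which the moment-matching trick avoids entirely.
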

\begin{proof}
	We split the proof into two steps. In the first two steps we consider a solution to with initial condition $ f_0\in \M_{5,+}(\S) $. By Theorem \ref{thm:WellPosed} we know that there is a unique solution $ f\in C([0,T_*); \Meas_{+})\cap L^\infty_{\loc}([0,T_*); \Meas_{5,+}) $. In Step 3 we show how to reduce the general case to this situation.
	
	\textit{Step 1.} We first derive the ODEs satisfied by the $ 4 $-tensor of fourth moments. Since the solution has finite $ 5 $-th moments, we can use the test functions $ \varphi_R(z)=\psi_R(|z|)z^{\otimes 4} $ and let $ R\to \infty $. We observe that for the test function $ \varphi(z) = z^{\otimes 4} $ we have
	\begin{align*}
		\Delta_{i}\varphi(z,z') &= 4\P_4\left( z^{\otimes 3}\otimes z' \right) + 4\P_4\left( z\otimes (z')^{\otimes 3} \right) + 6 \P_4\left( z\otimes z\otimes z'\otimes z' \right)
		\\
		&\quad +4\P_4\left( z^{\otimes 3}\otimes \xi_i \right) +4\P_4\left( (z')^{\otimes 3}\otimes \xi_i \right) + 12\P_4\left( z^{\otimes 2}\otimes z'\otimes \xi_i \right) + 12\P_4\left( (z')^{\otimes 2}\otimes z\otimes \xi_i \right) 
		\\
		&\quad + 6 \P_4\left( z^{\otimes2} \otimes \xi_i^{\otimes2} \right)+6 \left( (z')^{\otimes2} \otimes \xi_i^{\otimes2} \right) + 12\P_4\left( z\otimes z' \otimes \xi_i^{\otimes2} \right)
		\\
		&\quad + 4\P_4\left( z \otimes \xi_i^{\otimes3}\right) +4\P_4\left( z' \otimes \xi_i^{\otimes3} \right) + \xi_i^{\otimes4}.
	\end{align*}
	With the quadratic form of the kernel this yields an ODE system for $ \M^4 $ of the form
	\begin{align}\label{eq:sec3:ProofFourthMoment}
		\begin{split}
			\dfrac{d}{dt}\M^4 &= \mathcal{B}_1(\M^4,\M^2) + \mathcal{B}_2(\M^3,\M^3) + \mathcal{R}_1(\M^4,\M^1)+\mathcal{R}_2(\M^3,\M^2) 
			\\
			&\quad + \mathcal{R}_3(\M^3,\M^1)+\mathcal{R}_4(\M^2,\M^2) + \mathcal{R}_5(\M^2,\M^1)+\mathcal{R}_6(\M^1,\M^1).
		\end{split}
	\end{align}
	Here, the operators $ \mathcal{B}_1,\, \mathcal{B}_2 $ and $ \mathcal{R}_j $, $ j=1,\ldots,6 $, are bilinear with values in $ (\R^2)^{\otimes 4}_{\sym} $. One can see that
	\begin{align*}
		\mathcal{B}_1(\M^3, \M^2 ) = 4\P_4 \mathcal{A}_2(\M^4, \M^2),
	\end{align*}
	where $ \mathcal{A}_2 : (\R^2)^{\otimes4} \times (\R^2)^{\otimes2} \to (\R^2)^{\otimes4}  $ is given by
	\begin{align*}
		\mathcal{A}_2(J,T)_{k\ell m n} = \sum_{ a,b=1}^2 K_{ab} J_{ka } T_{b\ell m n}, \quad K=\sum_{i=1}^3\alpha_iK_i.
	\end{align*}

	\textit{Step 2.} We now study the ODE \eqref{eq:sec3:ProofFourthMoment}. In the first term in \eqref{eq:sec3:ProofFourthMoment} we use similarly as in the proof or Proposition \ref{pro:ThirdMomentsLocalization} the asymptotics of $ \M^2 $ according to Proposition \ref{pro:SecondMomentsLocalization}. We henceforth define
	\begin{align*}
		\Lop\M^4 = 3\P_4 \mathcal{A}_2(\M^4,\theta\otimes\theta).
	\end{align*}
	We then obtain an equation of the form
	\begin{align}\label{eq:sec3:ProofFourthMoment2}
		\dfrac{d}{dt}\M^4 = \dfrac{1}{T_*-t}\Lop \M^3 + \mathcal{B}(t)\M^4 + \mathcal{R}(t),
	\end{align} 
	where $ \mathcal{B}(t) $ is a linear operator $ (\R^2)^{\otimes 4}_{\sym}\to (\R^2)^{\otimes 4}_{\sym} $ and $ \mathcal{R}(t)\in (\R^2)^{\otimes 4}_{\sym} $. Using Proposition \ref{pro:SecondMomentsLocalization} and \ref{pro:ThirdMomentsLocalization} we obtain the bounds
	\begin{align*}
		\norm{\mathcal{B}\M^4} \leq C\norm{\M^4}, \quad \norm{\mathcal{R}(t)}\leq \dfrac{C}{(T_*-t)^6}.
	\end{align*}
	In order to analyse $ \Lop $ we use the following basis on $ (\R^2)^{\otimes4}_{\sym} $
	\begin{align*}
		\B_1:=&\theta^{\otimes4}, \quad \B_2:=\P_4\left( \theta^{\otimes3}\otimes\theta^\perp \right), \quad \B_3:=\P_4\left( \theta^{\otimes2}\otimes(\theta^\perp)^{\otimes 2} \right),
		\\
		\B_4:=&\P_4\left( \theta\otimes(\theta^\perp)^{\otimes3} \right), \quad \B_5:=(\theta^\perp)^{\otimes4}.
	\end{align*}
	Recall that $ \theta^\top K\theta =1 $ and let us write again $ \beta=\theta^\top K\theta^\perp $. The operator $ \Lop $ can then be written in matrix form as
	\begin{align*}
		\hat{\Lop} = \begin{pmatrix}
			4 & \beta & 0 & 0 & 0
			\\
			0 & 3 &  2\beta & 0 & 0
			\\
			0 & 0 & 2 &  3\beta & 0
			\\
			0 & 0 & 0 & 1 &  4\beta
			\\
			0 & 0 & 0 & 0 & 0
		\end{pmatrix}.
	\end{align*}
	In particular, we obtain $ \norm{e^{\hat{\Lop} t}}\leq Ce^{4t}  $ for all $ t\geq0 $. Let us write $ V(t) $ for the coordinate representation of $ \M^4(t) $ in the above basis. We then obtain from \eqref{eq:sec3:ProofFourthMoment2} the equation
	\begin{align*}
		\dfrac{d}{dt}V = \dfrac{1}{T_*-t}\hat{\Lop}V + \hat{\mathcal{B}}(t)V + \hat{\mathcal{R}}(t).
	\end{align*}
	We now introduce the time-change $ t(\tau) = T_*(1-e^{-\tau}) $ yielding for $ W(\tau) = V(t(\tau)) $
	\begin{align*}
		\dfrac{d}{d\tau}W = \hat{\Lop}W + T_*e^{-\tau}\hat{\mathcal{B}}(t(\tau))W + T_*e^{-\tau}\hat{\mathcal{R}}(t(\tau)).
	\end{align*}
	Observe that
	\begin{align*}
		\norm{\hat{\mathcal{B}}(t(\tau))W}\leq C\norm{W}, \quad \norm{\hat{\mathcal{R}}(t(\tau))}\leq Ce^{6\tau}.
	\end{align*}
	By Gronwall's lemma we obtain
	\begin{align*}
		\norm{W(\tau)} \leq Ce^{5\tau} \implies \norm{V(t)}\leq \dfrac{C}{(T_*-t)^5}.
	\end{align*}
	This yields the asserted bound. Let us mention that $ C>0 $ only depends on $ \M^j(f_0) $ for $ j=1,2,3,4 $.

	\textit{Step 3.} We now prove the bound when $ f_0\in \M_{4,+}(\S) $ and hence the solution satisfies merely $ f\in  L^\infty_{\loc}([0,T_*); \Meas_{4,+}) $. We now choose a sequence $ (f^n_0)_n $ in $ \Meas_{5,+}(\S) $ such that for all $ n\in \N $
	\begin{align}\label{eq:sec3:ProofFifthMomentApprox}
		\M^j(f^n_0) = \M^j(f_0), \quad j\in \{1,2,3,4\},
	\end{align}
	and $ f^n_0\to f_0 $ weakly as $ n\to \infty $. A way to construct these measures is to take the restriction $ f_0\ind_{[0,n]^2} $ and add a finite number of Diracs at specific points with specific weights to ensure \eqref{eq:sec3:ProofFifthMomentApprox}. This yields a set of equations for the weights which can be solved. In fact, this is related to the Caratheodory theorem in convex geometry (see Theorem B.12 in \cite{lasserre2009moments}). 
	
	From Step 2 we then obtain for the solutions $ f^n $ to $ f_0^n $
	\begin{align*}
		\norm{\M^4(f^n(t))} \leq \dfrac{C}{(T_*-t)^5}.
	\end{align*}
	Note that $ T_* $ as well as $ C $ is independent of $ n\in \N $ by \eqref{eq:sec3:ProofFifthMomentApprox}. By a compactness argument, similar to the one in the proof of Theorem \ref{thm:WellPosed}, we obtain $ f^n(t)\to f(t) $ weakly for all $ t\in [0,T_*) $. Thus, we obtain the asserted bound also for $ \M^4(f(t)) $. This concludes the proof.
\end{proof}

\subsection{Self-similar variables and localization} \label{sec:localization}
In this subsection we define the self-similar change of variables, which was already introduced in the introduction. We then reformulate the results concerning the moments proved in the preceding subsection. Furthermore, we then show that the distribution localizes along a line. These results correspond to statements (i) and (ii) in Theorem \ref{thm:IntrodLocaliSelfSim}. 

Given the solution $ f $ as in Theorem \ref{thm:WellPosed} with initial condition $ f_0\in \M_{4, + } $ we introduce the self-similar change of variables via
\begin{align} \label{eq:sec3:self-sim}
	F(\tau,\eta ) = (T_*-t(\tau))^{-7}f\left(t(\tau),(T_*-t(\tau))^{-2} \eta \right), \quad t(\tau) = T_*\left( 1-e^{-\tau}\right).
\end{align}
Recall that $ T_* $ is the blow-up time of the second moments, see Definition \ref{def:BlowUpTime}. We assume here that $ T_*<\infty $. Let us note that \eqref{eq:sec3:self-sim} has to be defined by duality, since $ f $ is merely a measure.

\begin{defi}\label{def:SelfSimVar}
	Consider $ \alpha\in \R_{+}^3 $, $ f_0\in \Meas_{4,+}(\S) $ and assume $ T_{\alpha,*}<\infty $. Let $ f $ be the solution to \eqref{eq:sec1:ContWeakForm} with initial condition $ f_0 $ according to Theorem \ref{thm:WellPosed}. We define the measure $ F\in C^1([0,\infty); \Meas_{+}(\R^2_+))\cap L^\infty_{\loc}([0,\infty); \Meas_{4,+}(\R^2_+)) $ by duality via
	\begin{align*}
		\int_{\R^2_+} \varphi(\eta) F(\tau,  d\eta  )= (T_* - t(\tau))^{- 3 } \int_{\S} \varphi(z (T_*- t(\tau))^2 ) f(t(\tau),  dz ) \quad t(\tau) = T_*\left( 1-e^{-\tau}\right)
	\end{align*}
	for every $\varphi \in C_b ( \R_+^2 ) $. 
\end{defi}
\begin{rem}
	Note that the support of the measure on the right hand side in \eqref{eq:sec3:self-sim} is given by $ e^{-2\tau}\S $. In Definition \ref{def:SelfSimVar} we implicitly extend the measure $ F(\tau) $ by zero outside of $ e^{-2\tau}\S $ to all of $ \R_+^2 $.
\end{rem}
We now derive the equation satisfied by $ F $ as in Definition \ref{def:SelfSimVar}.
\begin{lem}\label{lem:SelfSimEq}
	Consider $ \alpha\in \R_{+}^3 $, $ f_0\in \Meas_{4,+}(\S) $ and assume $ T_*=T_{\alpha,*}<\infty $. Let $ F $ be defined as in Definition \ref{def:SelfSimVar}. Then, $ F $ satisfies for all $ \varphi\in C_b(\R^2_+) $ and $ \tau\geq0 $
	\begin{align}\label{eq:ssec3:LemSelfSimEq}
		\begin{split}
			\dfrac{d}{d\tau } \int_{\R_+^2} \varphi (\eta) F(\tau, d\eta) &=
			3\int_{\R_+^2} \varphi (\eta ) F(\tau, d\eta) - 2\int_{\R_+^2}  (\eta \cdot\nabla\varphi(\eta)) F(\tau, dz) 
			\\
			&\quad + \sum_{i=1}^3\dfrac{\alpha_i}{2} \int_{\R_+^2}\int_{\R_+^2} \K_i(\eta,\eta') \, \Delta^*_i\varphi(\tau; \eta ,\eta ') \, F(\tau,d\eta)F(\tau,d\eta').
		\end{split}
	\end{align}
	where 
	\begin{equation} \label{eq:sec3:LemSelfSimEq Delta}
		\Delta^*_i\varphi(\tau; \eta ,\eta ') = \varphi \left( z+ z' + \xi_i \frac{ e^{ - 2 \tau } }{ T_*^2 } \right) - \varphi (z') - \varphi ( z ), \quad i \in \{ 1, 2, 3 \}. 
	\end{equation}
\end{lem}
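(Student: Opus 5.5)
The plan is a direct computation from Definition \ref{def:SelfSimVar} and the weak formulation \eqref{eq:sec1:ContWeakForm}. Write $s(\tau):=T_*-t(\tau)=T_*e^{-\tau}$, so that $t'(\tau)=s(\tau)$ and $s'(\tau)=-s(\tau)$. By definition,
\begin{align*}
	\int_{\R^2_+}\varphi(\eta)\,F(\tau,d\eta) = s(\tau)^{-3}\int_{\S}\varphi\big(s(\tau)^2 z\big)\, f(t(\tau),dz).
\end{align*}
First I would take $\varphi\in C^1_b(\R^2_+)$ with $\eta\cdot\nabla\varphi$ bounded, and recover general $\varphi\in C_b$ afterwards.

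Differentiating in $\tau$ gives three contributions.
\begin{enumerate}[(i)]
	\item The prefactor gives $-3 s'/s = 3$, which produces the term $3\int\varphi F$.
	\item The argument of $\varphi$ changes, since $\frac{d}{d\tau}\varphi(s^2z) = 2ss'\,z\cdot\nabla\varphi(s^2z)$. With $s'=-s$ and $\eta=s^2z$ this equals $-2\,\eta\cdot\nabla\varphi(\eta)$, which produces the transport term.
	\item The time dependence of $f$ gives
	\begin{align*}
		s^{-3}\,t'(\tau)\,\frac{d}{dt}\int_{\S}\psi\, f(t,dz), \qquad \psi(z):=\varphi(s^2z),
	\end{align*}
	with $s$ frozen. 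For this derivative I would use the time-differentiated weak formulation, which is valid since $f\in C^1([0,T_*);\Meas_+)$ by Theorem \ref{thm:WellPosed} and $\psi\in C_b(\S)$.
\end{enumerate}

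The product rule in (ii)--(iii) needs justification, because $s$ varies and $f$ varies simultaneously. I would write the difference quotient in $\tau$ and split it into the change in $\psi$ and the change in $f$. The change in $\psi$ is controlled by $\norm[1]{f}$, since $|\varphi(s_1^2z)-\varphi(s_2^2z)|\le C|s_1-s_2|\,|z|$ whenever $\eta\cdot\nabla\varphi$ is bounded. The change in $f$ uses the $C^1$ property of $f$ in the weak topology together with the uniform convergence of the test functions.

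Next comes the rescaling of the collision term in (iii). We have
\begin{align*}
	\Delta_i\psi(z,z') = \varphi\big(s^2z+s^2z'+s^2\xi_i\big)-\varphi(s^2z)-\varphi(s^2z').
\end{align*}
In the variables $\eta=s^2z$ and $\eta'=s^2z'$, homogeneity gives $\K_i(z,z')=s^{-4}\K_i(\eta,\eta')$, and $f(t,dz)f(t,dz')$ corresponds to $s^{6}F(\tau,d\eta)F(\tau,d\eta')$. Collecting the powers, $s^{-3}\cdot s\cdot s^{-4}\cdot s^{6}=1$, so the term becomes
\begin{align*}
	\sum_{i=1}^3\dfrac{\alpha_i}{2}\int\int\K_i(\eta,\eta')\big[\varphi(\eta+\eta'+s^2\xi_i)-\varphi(\eta)-\varphi(\eta')\big]\,F(\tau,d\eta)F(\tau,d\eta').
\end{align*}
This is $\Delta^*_i\varphi$ with shift $s^2\xi_i=T_*^2e^{-2\tau}\xi_i$. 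The exact constant in \eqref{eq:sec3:LemSelfSimEq Delta} should be checked against this; it reads $e^{-2\tau}/T_*^2$ there, and I would take the computed factor $s^2$ as the correct one. This step is bookkeeping only.

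The main obstacle is the class of test functions. For general $\varphi\in C_b$ the term $\eta\cdot\nabla\varphi$ is not defined, so I would interpret the identity in integrated form: integrate in $\tau$, move the derivative off $\varphi$, and approximate $\varphi$ by mollifications. Passing to the limit in the collision term uses dominated convergence with the bound $\K_i(\eta,\eta')\le C|\eta||\eta'|$ and the finite first moments of $F(\tau)$, which follow from Theorem \ref{thm:WellPosed}. The transport term needs weak continuity in $\tau$ of the push-forward, which holds by the continuity of $f$ in the weak topology.
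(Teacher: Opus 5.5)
Your proposal is correct and follows the paper's own argument, which simply inserts the definition of $F$ into the weak formulation \eqref{eq:sec1:ContWeakForm}: the terms $3\int\varphi F$ and $-2\int\eta\cdot\nabla\varphi\,F$ come from differentiating $s^{-3}$ and $\varphi(s^2z)$, and your power count $s^{-3}\cdot s\cdot s^{-4}\cdot s^{6}=1$ for the collision term is right. Since $\nabla\varphi$ appears, the statement should indeed be read for $\varphi\in C^1_b$. You are also right that $T_*-t(\tau)=T_*e^{-\tau}$ makes the shift in $\Delta^*_i$ equal to $T_*^2e^{-2\tau}\xi_i$, so the factor $e^{-2\tau}/T_*^2$ in the statement (and later in $\delta_i(\tau)$) is a typo, and it does not affect the asymptotic arguments.
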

\begin{proof}
	The assertion follows from \eqref{eq:sec1:ContWeakForm} and the definition of $ F $.
\end{proof}

The study of the moments of the solution $ f $ now yields the following corollary for the moments of $ F $.
\begin{cor}\label{cor:MomentEst}
	Consider $ \alpha\in \R_{+}^3 $, $ f_0\in \Meas_{4,+}(\S) $ and assume $ T_*=T_{\alpha,*}<\infty $. Let $ F $ be defined as in Definition \ref{def:SelfSimVar}. Then, we have for some constant $ C>0 $ and all $ \tau\geq0 $
	\begin{align*}
		\norm{\M^1(F(\tau))} &\leq Ce^{\tau},
		\\
		\norm{\dfrac{d}{d\tau}\M^2(F(\tau)) } \leq Ce^{-\tau}, \quad \norm{\M^2(F(\tau)) - \theta\otimes\theta} &\leq Ce^{-\tau},
		\\
		\norm{\M^3(F(\tau)) - c_0 \theta\otimes\theta\otimes\theta} &\leq Ce^{-\tau},
		\\
		\norm{\M^4(F(\tau))} &\leq C.
	\end{align*}
	Here, $ \theta\in \R^2_+ $ is given as in Proposition \ref{pro:SecondMomentsLocalization} and $ c_0>0 $ is defined in Proposition \ref{pro:ThirdMomentsLocalization}.
\end{cor}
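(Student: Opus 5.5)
The corollary is a direct translation of Propositions \ref{pro:1stOderMomentEquations}, \ref{pro:SecondMomentsLocalization}, \ref{pro:ThirdMomentsLocalization} and \ref{pro:ForthMomentsBound} into the self-similar variables of Definition \ref{def:SelfSimVar}. The first step is a scaling identity for moments. Take $\varphi(\eta)=\eta^{\otimes k}$ in Definition \ref{def:SelfSimVar}; this is legitimate for $k\le 4$ by truncation and monotone convergence, since $f(t)\in\Meas_{4,+}$. Writing $s=T_*-t(\tau)=T_*e^{-\tau}$, we obtain
\begin{align*}
	\M^k(F(\tau)) = s^{-3}s^{2k}\M^k(f(t(\tau))) = s^{2k-3}\M^k(f(t(\tau))).
\end{align*}
So $\M^1(F)=s^{-1}\M^1(f)$, $\M^2(F)=s\,\M^2(f)$, $\M^3(F)=s^3\M^3(f)$ and $\M^4(F)=s^5\M^4(f)$. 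These are exactly the weights appearing in the earlier propositions.

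Each bound then follows by substitution.

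\emph{First moment.} Proposition \ref{pro:1stOderMomentEquations} gives $\norm{\M^1(F)}\le C s^{-1}\norm{\M^1(f_0)} = C' e^{\tau}$.

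\emph{Second moment.} Proposition \ref{pro:SecondMomentsLocalization} gives $\norm{(T_*-t)\M^2(t)-\theta\otimes\theta}\le C(T_*-t)$. With $T_*-t=T_*e^{-\tau}$ this becomes $\norm{\M^2(F(\tau))-\theta\otimes\theta}\le Ce^{-\tau}$. For the derivative, use the chain rule with $dt/d\tau = T_*e^{-\tau}$:
\begin{align*}
	\frac{d}{d\tau}\M^2(F(\tau)) = T_*e^{-\tau}\,\frac{d}{dt}\big[(T_*-t)\M^2(t)\big]\Big|_{t=t(\tau)}.
\end{align*}
The bracketed derivative is uniformly bounded by the same proposition, which gives the bound $Ce^{-\tau}$.

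\emph{Third moment.} Proposition \ref{pro:ThirdMomentsLocalization} yields $\norm{\M^3(F(\tau))-c_0\theta^{\otimes 3}}\le C T_* e^{-\tau}$.

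\emph{Fourth moment.} Proposition \ref{pro:ForthMomentsBound} yields $\norm{\M^4(F(\tau))}\le C$.

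There is no real obstacle here. The only point needing care is justifying the unbounded test functions $\eta^{\otimes k}$ in the duality definition. This follows from the finite fourth moments of $f(t)$ for each fixed $t<T_*$, via the same cutoff $\Psi_R$ argument used in Proposition \ref{pro:MomentEquations}, applied to a static identity rather than an evolution equation. One also checks that the implicit extension of $F$ by zero outside $e^{-2\tau}\S$ does not affect the moments.
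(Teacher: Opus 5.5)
Your proposal is correct and follows the paper's proof essentially verbatim. The paper also uses the scaling identity $\M^k(F(\tau))=(T_*-t(\tau))^{2k-3}\M^k(f(t(\tau)))$ with $T_*-t(\tau)=T_*e^{-\tau}$, substitutes it into the first-, second-, third- and fourth-moment propositions, and applies the chain rule with $dt/d\tau=T_*e^{-\tau}$ for the derivative bound. Your remark justifying the unbounded test functions $\eta^{\otimes k}$ is a detail the paper leaves implicit.
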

\begin{proof}
	The proof follows from the following scaling principle relating the moments of $ f(t) $ and $ F(\tau) $
	\begin{align*}
		\M^k(F(\tau)) = (T_*-t(\tau))^{2k-3} \M^k(f(t(\tau))), \quad k\in \N_0.
	\end{align*}
	We can then employ the results of Propositions \ref{pro:1stOderMomentEquations}, \ref{pro:SecondMomentsLocalization}, \ref{pro:ThirdMomentsLocalization} and \ref{pro:ForthMomentsBound}. To this end, observe that $ T_*-t(\tau) = T_*e^{-\tau} $. For the estimate of the derivative of the second moments we use Proposition \ref{pro:SecondMomentsLocalization} yielding
	\begin{align*}
		\norm{\dfrac{d}{d\tau} \M^2(F(\tau))} = \norm{\dfrac{d}{dt}\left[ (T_*-t)\M^2(f(t)) \right]\mid_{t=t(\tau)}} \left| \dfrac{dt(\tau)}{d\tau} \right| \leq Ce^{-\tau}.
	\end{align*} 
	This concludes the proof.
\end{proof}

The study of the moments yield in fact the localization of the whole distribution function.
\begin{pro}\label{pro:Localization}
	Let $ \alpha\in \R_{+}^3 $, $ f_0\in \Meas_{4,+}(\S) $ and assume $ T_*=T_{\alpha,*}<\infty $. Consider $ F $ as in Definition \ref{def:SelfSimVar} and $\theta $ as in Proposition \ref{cor:MomentEst}. Then, we have the following estimate
	\begin{align*}
		\int_{\R_+^2} |\eta|^p \norm{\dfrac{\eta}{|\eta|}-\dfrac{\theta}{|\theta|}}^2\, F(\tau,d\eta) \leq Ce^{-\tau}, \quad p\in \{2,3\}
	\end{align*}
	for all $ \tau\geq0 $ and some constant $ C>0 $.
\end{pro}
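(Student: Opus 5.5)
The plan is to reduce the localization estimate to the moment asymptotics of Corollary \ref{cor:MomentEst}. Write $\omega:=\theta/|\theta|$ and $\omega^\perp$ for its rotation. For $\eta\in\R^2_+$ and $u:=\eta/|\eta|$ (a unit vector in the $1$-norm) the quantity $\norm{u-\omega}^2$ is comparable to $(u\cdot\omega^\perp)^2$. Indeed, $u$ and $\omega$ both lie on the segment $\{v\in\R^2_+: v_1+v_2=1\}$. Their difference is therefore parallel to $(1,-1)$, which is transversal to $\omega$. Since $\omega\cdot\omega^\perp=0$, this gives $\norm{u-\omega}\le C|(u-\omega)\cdot\omega^\perp|=C|u\cdot\omega^\perp|$. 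It thus suffices to show
\begin{align*}
	\int_{\R_+^2} |\eta|^{p-2}\,(\eta\cdot\theta^\perp)^2 \, F(\tau,d\eta)\le Ce^{-\tau},\quad p\in\{2,3\}.
\end{align*}

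For $p=2$ the left-hand side is exactly $(\theta^\perp)^\top \M^2(F(\tau))\theta^\perp$. Corollary \ref{cor:MomentEst} gives $\norm{\M^2(F(\tau))-\theta\otimes\theta}\le Ce^{-\tau}$, and $(\theta^\perp)^\top(\theta\otimes\theta)\theta^\perp=(\theta\cdot\theta^\perp)^2=0$. Hence the integral is bounded by $Ce^{-\tau}\norm{\theta^\perp}^2$, which is the claim for $p=2$.

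For $p=3$ I use $|\eta|=\eta_1+\eta_2=\eta\cdot e$ with $e=(1,1)^\top$, valid on $\R^2_+$. This turns the integrand into a cubic polynomial, so the integral equals the tensor contraction $\M^3(F(\tau))[e,\theta^\perp,\theta^\perp]$. By Corollary \ref{cor:MomentEst}, $\M^3(F(\tau))=c_0\theta^{\otimes3}+O(e^{-\tau})$. The leading term contributes $c_0(\theta\cdot e)(\theta\cdot\theta^\perp)^2=0$, so the integral is $O(e^{-\tau})$.

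There is no serious obstacle beyond these steps. The only point requiring care is the geometric comparability between $\norm{u-\omega}$ and $|u\cdot\omega^\perp|$, which needs the transversality of $(1,-1)$ to $\omega$; this holds because $\omega\in\R^2_+$ is nonzero. One also needs that $F(\tau)$ gives no mass to $\eta=0$, so that $u$ is defined; this holds since $F(\tau)$ is supported in $e^{-2\tau}\S$. Finally, the fourth-moment bound is not needed here; it presumably enters only in the self-similar convergence step.
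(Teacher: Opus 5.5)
Your proposal is correct and follows essentially the same route as the paper: both write the weighted integral as a fixed linear functional of $\M^p(F(\tau))$ that vanishes at $\theta\otimes\theta$ (resp.\ $c_0\,\theta\otimes\theta\otimes\theta$), and then invoke Corollary \ref{cor:MomentEst}. The only difference is packaging: the paper expands $\norm{\eta/|\eta|-\theta/|\theta|}^2$ into three terms and checks the cancellation by direct substitution, whereas your simplex observation in fact yields the exact identity $|\eta|^2\norm{\eta/|\eta|-\theta/|\theta|}^2=2(\eta\cdot\theta^\perp)^2/|\theta|^2$ (so $C=\sqrt{2}$), which makes the cancellation via $\theta\cdot\theta^\perp=0$ transparent.
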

\begin{proof}
	Observe that for $p \in \{ 2, 3 \} $ 
	\begin{align*}
		|\eta|^p \norm{\dfrac{\eta}{|\eta|}-\dfrac{\theta}{|\theta|}}^2 =     |\eta|^{p-2} \norm{\eta}^2 + \dfrac{ |\eta|^p }{|\theta|^2} \norm{\theta}^2 - 2 \dfrac{ |\eta|^{p-1} }{ |\theta | } \eta \cdot \theta.  
	\end{align*}
	For $ p=2 $ we then have
	 \begin{align*}
		\int_{\R_+^2} |\eta|^2 \norm{\dfrac{\eta}{|\eta|}-\dfrac{\theta}{|\theta|}}^2\, F(\tau,d\eta) &= 	\int_{\R_+^2} \left( \norm{\eta}^2 + \dfrac{ |\eta|^2 }{|\theta|^2} \norm{\theta}^2 -  \dfrac{ 2 |\eta| }{ |\theta | } \eta \cdot \theta \right) \, F(\tau,d\eta) 
		\\
		&=\sum_{k=1}^2 \M^2_{kk}(\tau) + \dfrac{\norm{\theta}^2}{|\theta|^2}\sum_{j,k=1}^2 \M^2_{kj}(\tau) - \dfrac{2}{|\theta|}\sum_{ j,k=1}^2 \M^2_{kj}(\tau) \theta_j.
	\end{align*}
	We now use Corollary \ref{cor:MomentEst}. Observe that replacing $ \M_{kj}(\tau) $ by $ \theta_k\theta_j $ in the preceding formula yields zero. In particular, we have
	\begin{align*}
		&\int_{\R_+^2} |\eta|^2 \norm{\dfrac{\eta}{|\eta|}-\dfrac{\theta}{|\theta|}}^2\, F(\tau,d\eta) 
		\\
		&\quad = \sum_{k=1}^2 \left( \M^2_{kk}(\tau)-\theta_k^2\right)  + \dfrac{\norm{\theta}^2}{|\theta|^2}\sum_{j,k=1}^2 \left( \M^2_{kj}(\tau)-\theta_k\theta_j\right)  - \dfrac{2}{|\theta|}\sum_{ j,k=1}^2 \left( \M^2_{kj}(\tau)-\theta_k\theta_j\right)  \theta_j.
		\\
		&\quad \leq Ce^{-\tau}.
	\end{align*}
	In the last inequality we used the estimate in Corollary \ref{cor:MomentEst}. The proof for $ p=3 $ is analogous using the estimate for the third order moments in Corollary \ref{cor:MomentEst}.
\end{proof}

\subsection{Convergence towards self-similar solution}\label{subsec:SelfSimilarConvergence}
In the previous subsection we prove that the distribution in self-similar variables $ F(\tau) $ concentrates on the line $ \{\lambda\theta, \lambda\geq0\} $. In this subsection we study the long-time asymptotics of the profile on the localization line itself, see (iii) in Theorem \ref{thm:IntrodLocaliSelfSim}. To this end, it is convenient to introduce a variant of polar coordinates as follows. Let us denote $ \Delta = \{ \eta \in \R^2 : |\eta | =1 \} $. We define the mapping
\begin{align*}
	T : (0,\infty)^2 \to (0,\infty)\times \Delta : \eta \mapsto (r,\omega) = \left( |\eta|, \dfrac{\eta}{|\eta|} \right).
\end{align*}
Note that $ T $ is a diffeomorphism with Jacobian $ |\det DT^{-1}|=r $. Furthermore, let us also define the projection
\begin{align*}
	P : (0,\infty)\times \Delta \to (0,\infty): (r,\omega)\to r.
\end{align*}
We then define the following distributions.
\begin{defi}\label{def:PolarCoordDistr}
	Let $ \alpha\in \R_{+}^3 $, $ f_0\in \Meas_{4,+}(\S) $ and assume $ T_{\alpha,*}<\infty $. Consider $ F $ as in Definition \ref{def:SelfSimVar}. We then define 
	\begin{align*}
		G(\tau) = T_{\#} F(\tau) \in \Meas_+(\R_+\times \Delta), \quad g(\tau) = \dfrac{1}{Z(\tau)}P_{\#}G(\tau) \in \Meas_+(\R_+),
	\end{align*}
	where $ Z(\tau)=\sum_{j,k=1}^2\M^2_{jk}(F(\tau)) $.
\end{defi}
The measure $ g(\tau) $  can be regarded as a measure on the line, indeed the $ \omega $-component has been integrated. An advantage of choosing the polar coordinates $ r=|\eta| $ and $\omega= \dfrac{\eta }{|\eta |}$ as the radial variable, is the fact that $ g $ will satisfy a one-dimensional coagulation equation. Indeed, the coagulation rule is $ (\eta,\eta')\mapsto \eta+\eta'+e^{-2\tau}\xi_i/T_*^2 $, see \eqref{eq:sec3:LemSelfSimEq Delta}, so that the coagulation rule for the radial variable is
\begin{align*}
	(r,r')\mapsto r+r'+\dfrac{e^{-2\tau}}{T_*^2}\left( \xi_{i,1}+\xi_{i,2} \right).
\end{align*}
When $ \tau\to \infty $ this is exactly the standard coagulation rule in one dimension.

As mentioned in the introduction the self-similar asymptotics is expected to be valid for the weighted measure $ |\eta|^2F(\tau) $. Thus, on the localization line we are interested in the measure $ r^2g(\tau) $, where the normalization factor $ Z(\tau) $ has been introduced in order to ensure that $ r^2g(\tau) $ is a probability measure:
\begin{align*}
	\int_{\R_+}r^2 g(\tau, dr) = \dfrac{1}{Z(\tau)}\int_{\R_+\times \Delta}r^2 G(\tau, dr,d\omega) = \dfrac{1}{Z(\tau)}\int_{\R^2_+}|\eta|^2F(\tau,d\eta) =\dfrac{1}{Z(\tau)}\sum_{j,k=1}^2\M^2_{jk}(F(\tau)) =1.  
\end{align*}
Also observe that by Corollary \ref{cor:MomentEst} we have $ Z(\tau) \to |\theta|^2 $. Let us mention that $ Z(\tau)>0 $ for all times. Otherwise $ F\equiv 0 $ by the uniqueness of the solution to the coagulation equation.

The main result in this subsection is then the following theorem.
\begin{thm}\label{thm:ConvSelfSim}
	Let $ \alpha\in \R_{+}^3 $, $ f_0\in \Meas_{4,+}(\S) $ and assume $ T_*=T_{\alpha,*}<\infty $. Consider $ g $ as in Definition \ref{def:PolarCoordDistr} and the probability measure on $ \R_+ $
	\begin{align*}
		g_\infty(dr) = \dfrac{1}{\sqrt{2\pi K_0r}} e^{-r/2K_0}dr,
	\end{align*}
	where $ K_0 = c_0|\theta| $. Here, $ \theta $ and $ c_0 $ is given in Corollary \ref{cor:MomentEst}. Then, we have
	\begin{align*}
		\lim_{ \tau \to \infty } r^2g(\tau) = g_\infty,
	\end{align*}
	with respect to the weak convergence of measures. 
\end{thm}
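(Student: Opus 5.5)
The plan is to follow the Laplace transform approach of Menon and Pego for the product kernel, applied to the radial measure $g(\tau)$ and using the localization in Proposition \ref{pro:Localization} to reduce to a one-dimensional problem.

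Define the probability measures $\nu_\tau(dr) := r^2 g(\tau,dr)$. By Corollary \ref{cor:MomentEst}, $\int r\,\nu_\tau(dr) = Z(\tau)^{-1}\int|\eta|^3F(\tau,d\eta)$ is bounded; in fact it converges to $c_0|\theta|^3/|\theta|^2 = c_0|\theta| = K_0$. Likewise $\int r^2\nu_\tau$ is bounded by the fourth moment bound. Hence $(\nu_\tau)$ is tight with uniformly integrable first moment. It therefore suffices to identify the limit through the Laplace-type transform
\[
\phi(\tau,q) := \int_{\R_+} r\bigl(1-e^{-qr}\bigr)\, g(\tau,dr) = Z(\tau)^{-1}\int_{\R_+^2} |\eta|\bigl(1-e^{-q|\eta|}\bigr) F(\tau,d\eta), \qquad q\geq 0.
\]
This is the one-dimensional analogue of the desingularized transform. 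Its $q$-derivative is $\int r^2 e^{-qr}\nu_\tau$, which determines $\nu_\tau$.

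\textbf{Step 1: derive the evolution equation for $\phi$.} Insert $\varphi(\eta) = |\eta|(1-e^{-q|\eta|})$ into \eqref{eq:ssec3:LemSelfSimEq}. Since $|\cdot|$ is additive on $\R_+^2$, the term $\Delta^*_i\varphi$ equals the one-dimensional increment up to a shift $e^{-2\tau}(\xi_{i,1}+\xi_{i,2})/T_*^2$, and that shift produces errors of size $O(e^{-2\tau})$ times moments. The kernel $\sum_i\alpha_i\K_i(\eta,\eta') = \eta^\top K\eta'$ is not a function of $|\eta|,|\eta'|$. We write $\eta = |\eta|\omega$ and $\eta^\top K\eta' = |\eta||\eta'|\,\omega^\top K\omega'$, then replace $\omega^\top K \omega'$ by $\kappa := \omega_\theta^\top K\omega_\theta = 1/|\theta|^2$, where $\omega_\theta = \theta/|\theta|$. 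The resulting error is bounded by $C|\eta||\eta'|\bigl(\|\omega-\omega_\theta\|+\|\omega'-\omega_\theta\|\bigr)$. Combined with the bound $|\Delta\varphi|\lesssim |\eta||\eta'|$ and Cauchy--Schwarz, Proposition \ref{pro:Localization} controls this error by $Ce^{-\tau/2}$, uniformly for $q$ in compacts. The main terms, together with the drift and the factor $\dot Z/Z = O(e^{-\tau})$, give
\[
\partial_\tau \phi = \phi - 2q\,\partial_q\phi + \frac{\kappa Z}{2}\,\bigl(\text{quadratic product-kernel terms}\bigr) + E(\tau,q), \qquad |E|\leq Ce^{-\tau/2}.
\]
For the product kernel the quadratic terms reduce to a Burgers-type structure: $\partial_\tau\phi = \phi - 2q\partial_q\phi - \tilde\kappa(\tau)\,\phi\,\partial_q\phi + E$ with $\tilde\kappa(\tau) = \kappa Z(\tau) \to 1$. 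The linear term in the product-kernel equation, which is $m_2\partial_q\phi$ in one dimension, is absorbed by this normalization. The constants $\pm 3$ and the drift must also be checked carefully in this reduction.

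\textbf{Step 2: pass to the limit.} By tightness, every subsequence $\tau_n\to\infty$ has a further subsequence along which the time-shifts $\phi(\tau_n+\cdot,\cdot)$ converge locally uniformly to an entire solution $\phi_\infty$ of the limiting equation
\[
\partial_\tau\phi = \phi - 2q\partial_q\phi - \phi\,\partial_q\phi \quad \text{on } \R\times\R_+ .
\]
The required equicontinuity comes from the equation itself, and $\phi_\infty$ inherits the normalizations $\partial_q\phi_\infty(\cdot,0) = 1$ and $\partial_q^2\phi_\infty(\cdot,0) = -K_0$ from the third-moment convergence. The main obstacle is to show that the only such entire solution is the stationary profile whose transform corresponds to $g_\infty$. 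I would first try the method of characteristics, as in the uniqueness proof of Lemma \ref{lem:Uniquenss}. The characteristics of the self-similar Burgers equation are explicit: in the original time variable $t$ the equation is $\partial_t\hat\phi = -\hat\phi\,\partial_q\hat\phi$ up to scaling. An entire solution must therefore be given by a backward inversion in which all information comes from $\tau=-\infty$. The uniform bounds on $\partial_q\phi$ and $\partial_q^2\phi$ near $q=0$ then force $\phi_\infty(q) = \frac{1}{2K_0}\bigl(\sqrt{1+4K_0 q}-1\bigr)$ up to the appropriate constants. This amounts to solving $K_0\phi^2+\phi = q$, which is the transform of $g_\infty$.

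Since the limit does not depend on the subsequence, $\phi(\tau)\to\phi_\infty$ holds along the full flow. Continuity of the inversion, together with tightness, then gives $\nu_\tau\to g_\infty$ weakly.
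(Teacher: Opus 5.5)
Your argument is a valid alternative to the paper's, but it differs at the decisive step.

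\textbf{How the two routes compare.} The paper never forms a limit equation. It restarts the flow at a large time $T$ and expands $\hat g(T,\rho)=\rho-\frac{K_0}{2}\rho^2+h_T(\rho)$ with $|h_T(\rho)|\le C(e^{-T}\rho^2+\rho^3)$, using the third-moment rate and the fourth-moment bound. It then follows the forward characteristics $\dot P=2P-\hat g(\cdot,P)$ for $\tau\in[0,T/8]$. It shows that $P_T(\tau,e^{-\tau}\rho)$ is $O(e^{-T/4})$-close to $Q_0(\rho)=\rho+\frac{K_0}{2}\rho^2$ on compacts. Inverting gives the explicit rate $\sup_{\rho\le M}|\partial_\rho\hat g(\tau,\rho)-(1+2K_0\rho)^{-1/2}|\le CM^{\kappa}e^{-\tau/10}$.

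You replace this by compactness of time translates plus rigidity of eternal solutions. That rigidity holds, and its proof is the $T\to\infty$ version of the paper's computation. Since $0\le\phi(s,p)\le p$, a backward characteristic ending at $(\tau,q)$ satisfies $e^{2(s-\tau)}q\le P(s)\le e^{s-\tau}q$. Along it,
\[
q=P(\tau)=e^{2(\tau-s)}\bigl(P(s)-\phi(s,P(s))\bigr)+e^{\tau-s}\phi(s,P(s)),\qquad \phi(\tau,q)=e^{\tau-s}\phi(s,P(s)).
\]
Combine this with $\phi(s,p)=p-\frac{K_0}{2}p^2+o(p^2)$ uniformly in $s$. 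Then the bounded quantity $u=e^{\tau-s}P(s)$ satisfies $q=u+\frac{K_0}{2}u^2+o(1)$ and $\phi(\tau,q)=u+o(1)$ as $s\to-\infty$, so $\phi(\tau,q)=Q_0^{-1}(q)$.

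Your route needs only $E\to0$ locally uniformly. The derivative estimate of Lemma \ref{lem:EstRem2} and the balancing of $\partial_\rho P_T\le e^{2\tau}$ against $e^{-T/2}$ therefore disappear. The cost is that you lose any rate, and you must check (routinely) that the limit is a classical solution so that characteristics apply.

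\textbf{Corrections needed before you execute it.}

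\emph{Sign of the nonlinear term.} For $\varphi(r)=r(1-e^{-qr})$ and the unshifted increment, one gets exactly $\frac12\iint rr'\Delta\varphi\,g\,g'=\phi\,\partial_q\phi$. So the limit equation is $\partial_\tau\phi=\phi+(\phi-2q)\partial_q\phi$, and in original time $\partial_t\hat\phi=+\hat\phi\,\partial_q\hat\phi$, with no extra linear term to absorb. With your sign, differentiating at $q=0$ gives $\partial_\tau\partial_q\phi(\cdot,0)=-2$ whenever $\partial_q\phi(\cdot,0)=1$. Hence no solution of your limit equation can satisfy the normalization you invoke. With the correct sign, both $\partial_q\phi(\cdot,0)=1$ and $\partial_q^2\phi(\cdot,0)$ are invariant under the limit flow.

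\emph{The stationary profile.} It solves $\frac{K_0}{2}\phi^2+\phi=q$, i.e.\ $\phi_\infty=\frac{1}{K_0}\bigl(\sqrt{1+2K_0q}-1\bigr)$, whose derivative $(1+2K_0q)^{-1/2}$ is the Laplace transform of $g_\infty$. Your relation $K_0\phi^2+\phi=q$ gives $\partial_q^2\phi_\infty(0)=-2K_0$, which contradicts your own normalization $\partial_q^2\phi_\infty(\cdot,0)=-K_0$.

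\emph{Uniformity of the Taylor expansion.} The rigidity step needs the expansion at $q=0$ to hold uniformly in time, i.e.\ equicontinuity of $\partial_q^2\phi$ at $0$. Bounds on $\partial_q\phi$ and $\partial_q^2\phi$ alone do not give this. The bound $|\partial_q^3\phi|\le\int r^2\nu_\tau\le C$, which comes from the fourth-moment estimate, does give it, and it passes to the limit.
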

The proof of this result will be achieved via the following steps.
\begin{enumerate}[(i)]
	\item We first derive a coagulation equation satisfied by $ g $. This equation will be one-dimensional. However, the defect of $ G $ being not localized for $tr < T_*$ will introduce forcing terms, that can be shown to decay as $ \tau\to \infty $.
	
	\item We then study the desingularized Laplace transform of $ g $, given by
	\begin{align}\label{eq:sec3:DesingLapTrans}
		\hat{g}(\tau, \rho) = \int_{\R_+}r(1-e^{-r\rho}) \, g(\tau, dr).
	\end{align}
	Using (i) we can derive an equation satisfied by $ \hat{g} $. This equation has the structure of a Burger's equation with forcing. Again the forcing term depends on $ G $ and can be shown to decay as $ \tau\to \infty $.
	
	\item Finally, using the moment bounds in Corollary \ref{cor:MomentEst} we obtain a priori knowledge on the regularity of $ \hat{g} $. In fact, we have $ \hat{g}\in C^1_b(\R_+;C_b(\R_+))\cap C_b(\R_{+}; C^3_b(\R_+)) $. This and the study of the characteristics to the Burger's equation allows to prove for all $ \rho\geq0 $
	\begin{align*}
		\lim_{\tau \to \infty} \partial_\rho\hat{g}(\tau,\rho) = \dfrac{1}{\sqrt{1+2K_0\rho}}.
	\end{align*}
	Observe that $ \partial_\rho\hat{g}(\tau,\rho) $ is the standard Laplace transform of $ r^2g(\tau,dr) $, see \cite{menon2004approach}. Recall that the Laplace transform to $ g_\infty $ in Theorem \ref{thm:ConvSelfSim} is given by $ 1/\sqrt{1+2K_0\rho} $. Furthermore, recall that weak convergence in the sense of measures is equivalent to the pointwise convergence of the Laplace transform.
\end{enumerate}
Following the preceding plan we have the following lemma.
\begin{lem}\label{lem:ProjCoagEq}
	Let $ \alpha\in \R_{+}^3 $, $ f_0\in \Meas_{4,+}(\S) $ and assume $ T_*=T_{\alpha,*}<\infty $.  Consider $ g $ and $ G $ as in Definition \ref{def:SelfSimVar}. Let $ \theta\in \R^2_+ $ be given as in Corollary \ref{cor:MomentEst} and define $ \omega_\theta = \theta/|\theta| $. Then, we have the following equation for all $\varphi \in C^1_b (\R_+) $
	\begin{align*}
		&\dfrac{d}{d\tau}\int_{\R_+} \varphi(r) g(\tau,dr) =
		3\int_{\R_+} \varphi (r) g(\tau,dr) - 2\int_{\R_+^2} r\partial_r\varphi(r) g(\tau, dr) - \dfrac{\dot{Z}(\tau)}{ Z(\tau) }  \int_{\R_+} \varphi (r) g(\tau, dr) 
		\\
		&\quad + \sum_{i=1}^3\dfrac{\alpha_iZ(\tau)}{2}\dfrac{\theta^\top K_i\theta}{|\theta|^2} \int_{\R_+}\int_{\R_+} rr' \, \bar{\Delta}_i\varphi(\tau;r,r ') \, g(\tau,dr) g (\tau,dr') 
		\\
		& \quad + \sum_{i=1}^3\dfrac{\alpha_i}{2Z(\tau)}\int_{\R_+ \times \Delta}\int_{\R_+ \times \Delta}  r r' \left[\K_i(\omega,\omega') - \K_i(\omega_\theta,\omega_\theta) \right] \, \bar{\Delta}_i\varphi(\tau; r,r ') \, G(\tau,dr,d\omega) G (\tau,dr',d\omega).
	\end{align*}
	Here, we defined
	\begin{align*}
		\bar{\Delta}_i \varphi (\tau,r,r') = \ind_{\{r+r'-\delta_i(\tau)\geq0\}}\left[ \varphi\left( r+r'-\delta_i(\tau) \right) -\varphi(r)-\varphi(r')\right] , \quad \delta_i(\tau)=-(\xi_{i,1}+\xi_{i,2})\dfrac{e^{-2\tau}}{T_*^2}\geq0.
	\end{align*}
\end{lem}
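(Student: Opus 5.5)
The plan is to derive the equation directly from the weak self-similar equation of Lemma \ref{lem:SelfSimEq}, with a radial test function. Fix $\varphi\in C^1_b(\R_+)$ and put $\Phi(\eta):=\varphi(|\eta|)$. By Definition \ref{def:PolarCoordDistr},
\begin{align*}
	Z(\tau)\int_{\R_+}\varphi(r)\,g(\tau,dr)=\int_{\R_+\times\Delta}\varphi(r)\,G(\tau,dr,d\omega)=\int_{\R_+^2}\Phi(\eta)\,F(\tau,d\eta).
\end{align*}
Differentiating in $\tau$ gives $\frac{d}{d\tau}\int\varphi\, g = Z^{-1}\frac{d}{d\tau}\int \Phi F - \frac{\dot Z}{Z}\int\varphi\, g$. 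This produces the $\dot Z/Z$ term; $Z$ is $C^1$ with $Z>0$ by Corollary \ref{cor:MomentEst} and the remark preceding the theorem.

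\emph{Linear terms.} Since $\Phi$ is not $C^1$ at the origin, I would first use a smooth cutoff near $0$, noting that $F(\tau)$ is supported in $e^{-2\tau}\S$, away from the origin. For $\eta\in\R_+^2$ we have $|\eta|=\eta_1+\eta_2$, hence $\nabla\Phi(\eta)=\varphi'(|\eta|)(1,1)^\top$ and $\eta\cdot\nabla\Phi=|\eta|\varphi'(|\eta|)=r\partial_r\varphi$. Dividing by $Z$, the terms $3\int\Phi F - 2\int\eta\cdot\nabla\Phi\,F$ become $3\int\varphi\, g-2\int r\varphi'\, g$.

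\emph{Collision term.} For $\eta,\eta'$ in the support, $\eta+\eta'+\xi_ie^{-2\tau}/T_*^2$ lies in $\R_+^2$, and its $1$-norm equals $r+r'-\delta_i(\tau)$. So $\Delta_i^*\Phi(\tau;\eta,\eta')=\bar\Delta_i\varphi(\tau;r,r')$, the indicator being automatically equal to $1$ on the support. Next, bilinearity gives $\K_i(\eta,\eta')=rr'\K_i(\omega,\omega')$. Adding and subtracting $\K_i(\omega_\theta,\omega_\theta)=\theta^\top K_i\theta/|\theta|^2$ splits the collision integral into two pieces. The piece with the constant factor integrates out the $\omega$-variables: pushing forward under $P$ and using $P_\#G=Zg$ yields $Z^2\,\frac{\theta^\top K_i\theta}{|\theta|^2}\iint rr'\,\bar\Delta_i\varphi\, g\,g$. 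After dividing by $Z$ this leaves the prefactor $\alpha_iZ/2$. The remainder piece stays in $G$ variables with the prefactor $\alpha_i/(2Z)$. This matches the statement term by term.

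\emph{Main obstacle.} The only real issue is justification: the test function $\Phi$ is not $C_b$-admissible near the origin, and the kernel is unbounded. I would handle the origin by the support observation above. The growth is harmless, since $\varphi$ is bounded and $\K_i\le C|\eta||\eta'|$, so the integrands are controlled by the second moments, which are bounded by Corollary \ref{cor:MomentEst}. Differentiability of $\tau\mapsto\int\Phi F$ then follows from Lemma \ref{lem:SelfSimEq}.
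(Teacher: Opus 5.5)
Your proposal is correct and takes the same route as the paper. You test Lemma \ref{lem:SelfSimEq} with $\varphi(|\eta|)$, use that on the support of $F(\tau)\otimes F(\tau)$ the shifted point stays in $\R_+^2$ (so the indicator in $\bar\Delta_i\varphi$ equals one and the $1$-norm is additive), and add and subtract $\K_i(\omega_\theta,\omega_\theta)$, while spelling out the $Z$-normalization and transport terms that the paper leaves implicit. The cutoff near the origin is unnecessary, since on $\R_+^2$ the map $\eta\mapsto\varphi(\eta_1+\eta_2)$ is already $C^1$, as your own gradient formula shows.
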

\begin{proof}
	The equation follows using Lemma \ref{lem:SelfSimEq} and the definition of $ g $ and $ G $. Recall that $ F(\tau) $ defined in Definition \ref{def:SelfSimVar} is supported on the set $ e^{-2\tau}\S_i $. As a consequence $ G(\tau) $ is supported on the set $ \{r+r'-\delta_i(\tau)\geq0\} $. The coagulation operator has then the form
	\begin{align*}
		\sum_{i=1}^3\dfrac{\alpha_i}{2Z(\tau)}\int_{\R_+\times\Delta} \int_{\R_+\times\Delta} rr'\K_i(\omega,\omega') \, \bar{\Delta}_i\varphi(\tau; r  ,r ') \, G(\tau,dr,d\omega) G(\tau,dr',d\omega).
	\end{align*}
	We then add and subtract the kernel $ \K_i(\omega_\theta,\omega_\theta)=\theta^\top K_i\theta/|\theta|^2 $. This yields then the last two terms in the asserted equation. 
\end{proof}

In order to estimate the contribution of the coagulation kernel containing $ G $ we make use of the following lemma, which immediately follows from Proposition \ref{pro:Localization}.

\begin{lem}\label{lem:EstLocPolarCoord}
	Let $ G $ be given as in Definition \ref{def:PolarCoordDistr}. Then, we have the estimates for all $ \tau\geq0 $
	\begin{align*}
		\int_{\R_+\times\Delta}r^p \norm{\omega-\omega_\theta}^2G(\tau, dr,d\omega)\leq Ce^{-\tau}, \quad p\in \{2,3\}.
	\end{align*}
\end{lem}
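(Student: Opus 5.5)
The estimate in Lemma \ref{lem:EstLocPolarCoord} is a change of variables applied to Proposition \ref{pro:Localization}. The plan is to push the integral back through the map $T$ and recognize the integrand.

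By Definition \ref{def:PolarCoordDistr}, $G(\tau)=T_{\#}F(\tau)$. For any non-negative Borel function $\psi$ on $(0,\infty)\times\Delta$ we therefore have
\begin{align*}
\int_{\R_+\times\Delta}\psi(r,\omega)\,G(\tau,dr,d\omega)=\int_{\R^2_+}\psi\left(|\eta|,\frac{\eta}{|\eta|}\right)F(\tau,d\eta).
\end{align*}
This follows from the definition of the push-forward. Monotone convergence extends it from bounded continuous $\psi$ to non-negative measurable, possibly unbounded, $\psi$. Such integrands are legitimate here because $F(\tau)$ has finite moments up to order four by Corollary \ref{cor:MomentEst}.

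Take $\psi(r,\omega)=r^p\norm{\omega-\omega_\theta}^2$ with $\omega_\theta=\theta/|\theta|$. The right-hand side then becomes exactly
\begin{align*}
\int_{\R_+^2}|\eta|^p\norm{\frac{\eta}{|\eta|}-\frac{\theta}{|\theta|}}^2F(\tau,d\eta).
\end{align*}
For $p\in\{2,3\}$, Proposition \ref{pro:Localization} bounds this by $Ce^{-\tau}$.

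One subtlety concerns the origin, where $T$ is undefined. Since $F(\tau)$ is supported in $e^{-2\tau}\S$ up to the constant factor $T_*^{2}$, it gives no mass to $\eta=0$. Moreover $T$ is defined on all of $e^{-2\tau}\S$, because that set lies in $(0,\infty)^2$. So the push-forward identity holds with no boundary contribution.

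I do not expect any real obstacle. The only point to check is that the localization vector $\omega_\theta$ defined in Lemma \ref{lem:ProjCoagEq} coincides with $\theta/|\theta|$ as used in Proposition \ref{pro:Localization}, and it does by definition.
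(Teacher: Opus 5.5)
Your proposal is correct and follows the paper's route exactly. The paper states this lemma as an immediate consequence of Proposition \ref{pro:Localization}, which is precisely your change of variables through $G(\tau)=T_{\#}F(\tau)$ with the integrand $r^p\norm{\omega-\omega_\theta}^2$. Your check that $F(\tau)$ is supported in $T_*^2e^{-2\tau}\S\subset(0,\infty)^2$, so that $T$ is defined $F(\tau)$-almost everywhere, is the only extra detail needed, and you handled it correctly.
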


Using Lemma \ref{lem:ProjCoagEq} we now derive the equation satisfied by the desingularized Laplace transform of $ g $, see \eqref{eq:sec3:DesingLapTrans}.

\begin{lem}\label{lem:ProjEqLapTransf}
	Let $ \alpha\in \R_{+}^3 $, $ f_0\in \Meas_{4,+}(\S) $ and assume $ T_*=T_{\alpha,*}<\infty $.  Consider $ g $ and $ G $ as in Definition \ref{def:SelfSimVar}. Then, the desingularized Laplace transform $ \hat{g} \in C^1(\R_+;C_b(\R_+)) \cap C_b(\R_+;C^3_b(\R_+)) $ satisfies the equation
	\begin{align}\label{eq:sec3:ProjEqLapTransf}
		\partial_\tau \hat{g} = \hat{g} + \left(\hat{g} - 2\rho \right) \partial_\rho \hat{g} + \Rem.
	\end{align}
	Here, we defined
	\begin{align*}
		&\Rem(\tau,\rho) = -\frac{\dot{Z}(\tau)}{ Z(\tau)}\hat{g} + \left( \dfrac{Z(\tau)}{|\theta|^2}-1 \right) \hat{g}\partial_\rho\hat{g} + \sum_{i=1}^3\alpha_iZ(\tau)\dfrac{\theta^\top K_i\theta}{|\theta|^2}\int_{\R_+}\int_{\R_+} \, \left[ h_1^i+h_2^i \right]  \, g(\tau,dr) g(\tau,dr')
		\\
		&\quad + \sum_{i=1}^3 \dfrac{\alpha_i}{Z(\tau)} \int_{\R_+\times\Delta}\int_{\R_+ \times \Delta} \left[\K_i(\omega,\omega') - \K_i(\omega_\theta  ,\omega_\theta) \right] \, \left[ h_1^i+h_2^i+h_3 \right] \, G(\tau,dr,d\omega) G(\tau,dr',d\omega'),
	\end{align*}
	as well as
	\begin{align*}
		h_1^i(\tau,\rho,r,r ') &=  \ind_{\{r+r'-\delta_i(\tau)\geq0\}}  r'r^2e^{-\rho (r+ r')} \left(1-e^{\delta_i(\tau)\rho}\right), \quad i\in \{1,2,3\},
		\\
		h_2^i(\tau,\rho,r,r ') &=  -\ind_{\{r+r'-\delta_i(\tau)\geq0\}} \dfrac{1}{2}rr'\delta_i(\tau) \left(1-e^{-(r+r'-\delta_i(\tau))\rho}\right), \quad i\in \{1,2,3\},
		\\
		h_3(\tau,\rho,r,r ') &= r'r^2e^{-r\rho} \left(1- e^{-r'\rho}\right).
	\end{align*}
\end{lem}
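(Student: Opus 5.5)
We test the equation of Lemma \ref{lem:ProjCoagEq} with $\varphi_\rho(r)=r(1-e^{-r\rho})$ for fixed $\rho\geq0$. This gives $\int\varphi_\rho\,g=\hat g(\tau,\rho)$. The function $\varphi_\rho$ is not bounded, so we first justify its use. We apply a cutoff $\Psi_R(r)\varphi_\rho(r)$ and let $R\to\infty$ exactly as in the proof of Proposition \ref{pro:MomentEquations}. This is legitimate because $F(\tau)$ has uniformly bounded moments of order $2,3,4$ by Corollary \ref{cor:MomentEst}; the kernel contributes $rr'$ and $\varphi_\rho$ grows at most linearly. The same moment bounds give the regularity claim. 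Indeed, $\partial_\rho^k\hat g=\int r^{k+1}(\pm e^{-r\rho})g\,dr$ for $k\ge1$, so $\partial_\rho^3\hat g$ is bounded by the fourth moment, and $Z(\tau)\to|\theta|^2>0$. Thus $\hat g\in C_b(\R_+;C^3_b)$. Continuity in $\tau$ of the right-hand side yields $C^1(\R_+;C_b)$.

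Next we compute the linear terms. We have $3\hat g$ from the first term. For the transport term, $r\partial_r\varphi_\rho=\varphi_\rho+r^2\rho e^{-r\rho}$, and $\rho\,\partial_\rho\hat g=\int r^2\rho e^{-r\rho}g$. Hence $-2\int r\partial_r\varphi_\rho\, g=-2\hat g-2\rho\partial_\rho\hat g$. Together with $3\hat g$ this gives $\hat g-2\rho\partial_\rho\hat g$. The term $-\dot Z\hat g/Z$ is placed in $\Rem$.

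Then we treat the coagulation terms by splitting off the leading one-dimensional structure. Set $s=r+r'$. When $\delta_i=0$, the identity used in Lemma \ref{lem:Uniquenss} gives
\[
\varphi_\rho(s)-\varphi_\rho(r)-\varphi_\rho(r')=re^{-r\rho}(1-e^{-r'\rho})+r'e^{-r'\rho}(1-e^{-r\rho}).
\]
Multiplying by $rr'$ and integrating against $g\otimes g$, symmetry yields $2\int\!\int r^2e^{-r\rho}\,r'(1-e^{-r'\rho})\,gg'=2\,\partial_\rho\hat g\,\hat g$. With the prefactor $\frac{\alpha_iZ}{2}\frac{\theta^\top K_i\theta}{|\theta|^2}$ and $\sum_i\alpha_i\theta^\top K_i\theta=1$ (Proposition \ref{pro:SecondMomentsLocalization}), this becomes $\frac{Z}{|\theta|^2}\hat g\partial_\rho\hat g$. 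Its difference from $\hat g\partial_\rho\hat g$ is the second term of $\Rem$.

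The $\delta_i$-corrections are what produce $h_1^i$ and $h_2^i$. We write
\[
\varphi_\rho(s-\delta)=(s-\delta)\bigl(1-e^{-(s-\delta)\rho}\bigr),
\]
and subtract the $\delta=0$ expression. Part of the difference comes from $e^{-(s-\delta)\rho}-e^{-s\rho}=e^{-s\rho}(e^{\delta\rho}-1)$ multiplying $r$ (resp.\ $r'$). After symmetrization this gives $h_1^i$. The remaining part is $-\delta(1-e^{-(s-\delta)\rho})$; splitting it symmetrically gives $h_2^i$ with its factor $\tfrac12$. For the $G$-term in Lemma \ref{lem:ProjCoagEq}, the same decomposition holds with $\K_i(\omega,\omega')-\K_i(\omega_\theta,\omega_\theta)$ in place of the constant. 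Here the undeformed part appears as $h_3$, written non-symmetrically since the kernel difference is symmetric. The indicator $\ind_{\{r+r'\ge\delta_i\}}$ carries over from $\bar\Delta_i$.

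The main obstacle is bookkeeping rather than analysis. We must track factors of $2$ from symmetrization, and ensure the indicator is handled consistently: the undeformed $h_3$ lacks it, while the deformed terms carry it. Exchanging $\partial_\rho$ with the integrals also has to be justified by dominated convergence using the fourth-moment bound. I would verify the coefficient of $\hat g\partial_\rho\hat g$ first, since it fixes the normalization everything else follows.
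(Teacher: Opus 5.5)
Your proposal is correct and follows the paper's own proof. Both test Lemma~\ref{lem:ProjCoagEq} with $\varphi(r)=r(1-e^{-r\rho})$, collect the linear terms into $\hat g-2\rho\,\partial_\rho\hat g-\frac{\dot Z}{Z}\hat g$, split $rr'\bar{\Delta}_i\varphi$ into the undeformed part plus the $\delta_i$-corrections $h_1^i,h_2^i$, symmetrize, and use $\sum_i\alpha_i\theta^\top K_i\theta=1$; your cutoff justification for the unbounded test function is a detail the paper omits. The indicator issue you flag is harmless, because $F(\tau)$ is supported in $(T_*-t(\tau))^2\mathcal{S}$ and $\delta_i(\tau)$ carries the same scaling, so $r+r'-\delta_i(\tau)>0$ on the support of $G(\tau)\otimes G(\tau)$ (as noted in the proof of Lemma~\ref{lem:ProjCoagEq}); this is exactly what lets the leading term equal $\frac{Z}{|\theta|^2}\hat g\,\partial_\rho\hat g$ and lets $h_3$ appear without an indicator.
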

\begin{proof}
	First of all the regularity $ \hat{g} \in C^1(\R_+;C_b(\R_+)) \cap C_b(\R_+;C^3_b(\R_+)) $ follows from the moment bounds given in Corollary \ref{cor:MomentEst} together with the definition of $ g $ and its desingularized Laplace transform. The asserted equation can be derived using Lemma \ref{lem:ProjCoagEq} and the test function $ \varphi(r) = r(1-e^{-r\rho}) $ for given $ \rho\geq0 $. To this end, we observe that 
	\begin{align*}
		\bar{\Delta}_{i}\varphi(\tau,r,r') &= \ind_{\{r+r'-\delta_i(\tau)\geq0\}}\left[ r' r^2 e^{ - \rho r } (1- e^{- r'\rho}) + r'r^2e^{-\rho (r+ r')} (1-e^{\delta_i(\tau)\rho}) \right]  +
		\\
		&\quad + \ind_{\{r+r'-\delta_i(\tau)\geq0\}} \left[ r (r')^2 e^{ - \rho r' } (1- e^{- r\rho}) + r (r')^2e^{-\rho (r+ r')} (1-e^{\delta_i(\tau)\rho}) \right] 
		\\
		&\quad - \ind_{\{r+r'-\delta_i(\tau)\geq0\}} \left[ rr'\delta_i(\tau) (1-e^{-(r+r'-\delta_i(\tau))\rho}) \right] 
		\\
		&= \ind_{\{r+r'-\delta_i(\tau)\geq0\}} \sum_{j=1}^2 \left[ h_j^i(\tau,r,r')+ h_j^i(\tau,r',r) \right] +\ind_{\{r+r'-\delta_i(\tau)\geq0\}}\left[ h_3(\tau,r,r')+ h_3(\tau,r',r) \right].
	\end{align*}
	We can reduce this in the integral by making use of the symmetry $ r\leftrightarrow r' $. This yields
	\begin{align*}
		&\sum_{i=1}^3\dfrac{\alpha_iZ(\tau)}{2}\dfrac{\theta^\top K_i\theta}{|\theta|^2} \int_{\R_+}\int_{\R_+} rr' \, \bar{\Delta}_i\varphi(\tau;r,r ') \, g(\tau,dr) g (\tau,dr') 
		\\
		&\quad= \dfrac{Z(\tau)}{|\theta|^2}\hat{g}\partial_\rho\hat{g} + \sum_{i=1}^3\alpha_iZ(\tau)\dfrac{\theta^\top K_i\theta}{|\theta|^2}\int_{\R_+}\int_{\R_+} \, \left[h_1^i+h_2^i\right]  \, g(\tau,dr) g(\tau,dr').
	\end{align*}
	Here, we used for the first term containing $ h_3 $ that $ \sum_{i=1}^3\alpha_i\theta^\top K_i\theta =1 $, see Proposition \ref{pro:SecondMomentsLocalization}. Furthermore, we have
	\begin{align*}
		3\int_{\R_+} \varphi (r) g(\tau,dr) - 2\int_{\R_+^2} r\partial_r\varphi(r) g(\tau, dr) - \dfrac{\dot{Z}(\tau)}{ Z(\tau) }  \int_{\R_+} \varphi (r) g(\tau, dr) = \hat{g} - 2\rho\partial_\rho\hat{g} - \dfrac{\dot{Z}(\tau)}{Z(\tau)}\hat{g}.
	\end{align*}
	Putting all terms together yields the equation \eqref{eq:sec3:ProjEqLapTransf} and the form of $ \Rem $.
\end{proof}

Next we provide estimates on the reminder term $ \Rem $ and its derivative $ \partial_\rho\Rem $.
\begin{lem}\label{lem:EstRem1}
	Under the assumptions of Theorem \ref{thm:ConvSelfSim} we have
	\begin{align*}
		|\Rem(\tau,\rho)| \leq C\rho e^{-\tau/2},
	\end{align*}
	for all $ \tau\geq0 $, $ \rho\geq0 $ and some constant $ C>0 $.
\end{lem}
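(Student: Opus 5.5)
We bound the terms of $\Rem$ separately, and in each case extract one factor of $\rho$ from an elementary inequality. The tools are: $|1-e^{-x}|\le \min\{1,x\}$ for $x\ge0$; the fact that $Z(\tau)$ stays bounded above and below by positive constants (Corollary \ref{cor:MomentEst}, with $Z(\tau)\to|\theta|^2$); and the decay $\delta_i(\tau)\le Ce^{-2\tau}$.

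\textbf{Terms with explicit decaying prefactors.} Corollary \ref{cor:MomentEst} gives $|\dot Z(\tau)|\le Ce^{-\tau}$ and $|Z(\tau)/|\theta|^2-1|\le Ce^{-\tau}$. We also have
\[
0\le \hat g(\tau,\rho)=\int r(1-e^{-r\rho})\,g(\tau,dr)\le \rho\int r^2 g(\tau,dr)=\rho
\]
and $0\le\partial_\rho\hat g\le \int r^2 g=1$. Hence the first two terms of $\Rem$ are bounded by $C\rho e^{-\tau}$.

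\textbf{Terms containing $h_1^i$ and $h_2^i$.} For $h_2^i$, use $1-e^{-(r+r'-\delta_i)\rho}\le (r+r')\rho$ on the indicator set. This gives $|h_2^i|\le C\delta_i\rho\, rr'(r+r')$.

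For $h_1^i$, note that $e^{-\rho(r+r')}(e^{\delta_i\rho}-1)\le e^{-\rho(r+r'-\delta_i)}\delta_i\rho\le \delta_i\rho$, since $r+r'\ge\delta_i$ on the indicator set. This gives $|h_1^i|\le \delta_i\rho\, r^2r'$.

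Integrating against $g\otimes g$ requires the moments $\int r g$, $\int r^2 g$ and $\int r^3 g$. These equal $Z^{-1}\int|\eta|^k F$, which by Corollary \ref{cor:MomentEst} are at most $Ce^{\tau}$ for $k=1$ and bounded for $k=2,3$. The worst product is therefore bounded by $Ce^{\tau}$, and multiplying by $\delta_i\le Ce^{-2\tau}$ yields $C\rho e^{-\tau}$. The same estimates apply to the $h_1^i,h_2^i$ parts of the $G$-term, since $|\K_i(\omega,\omega')-\K_i(\omega_\theta,\omega_\theta)|$ is bounded on $\Delta\times\Delta$.

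\textbf{The $h_3$ contribution (main obstacle).} This is the only term without a small prefactor, so the decay must come from localization. Since $\K_i$ is bilinear with bounded coefficients,
\[
|\K_i(\omega,\omega')-\K_i(\omega_\theta,\omega_\theta)|\le C\big(\norm{\omega-\omega_\theta}+\norm{\omega'-\omega_\theta}\big).
\]
Next, $|h_3|\le r^2r'\min\{1,r'\rho\}\le \rho\, r^2 (r')^2$.

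The term with $\norm{\omega-\omega_\theta}$ is bounded by
\[
\rho\int r^2\norm{\omega-\omega_\theta}\,G\cdot\int (r')^2 G .
\]
By Cauchy--Schwarz, $\int r^2\norm{\omega-\omega_\theta}G\le (\int r^2\norm{\omega-\omega_\theta}^2G)^{1/2}(\int r^2G)^{1/2}$. Lemma \ref{lem:EstLocPolarCoord} with $p=2$ then gives a bound $Ce^{-\tau/2}$, while $\int (r')^2G=Z$ is bounded.

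The term with $\norm{\omega'-\omega_\theta}$ is handled symmetrically, again using $\int r^2 G=Z$ and Cauchy--Schwarz in $r'$ with Lemma \ref{lem:EstLocPolarCoord} for $p=2$. The $e^{-\tau/2}$ rate in the statement comes precisely from this square root.

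\textbf{Conclusion.} Combining all the bounds gives $|\Rem(\tau,\rho)|\le C\rho(e^{-\tau}+e^{-\tau/2})\le C\rho e^{-\tau/2}$. The delicate points are, first, always pairing the worst-growing moment $\int r\,g\lesssim e^{\tau}$ with a factor $\delta_i\lesssim e^{-2\tau}$, and second, avoiding the unbounded first moment in the $h_3$ term. The bound $\min\{1,r'\rho\}\le r'\rho$ keeps that term at second moments only.
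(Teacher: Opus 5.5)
Your proposal is correct and follows essentially the same route as the paper. Like the paper, you split $\Rem$ term by term. For the $h_1^i,h_2^i$ pieces you use the bounds $|h_1^i|\le \delta_i\rho\, r^2r'$ and $|h_2^i|\lesssim \delta_i\rho\, rr'(r+r')$, pairing the growing first moment $\M^1(g)\lesssim e^{\tau}$ with $\delta_i\lesssim e^{-2\tau}$. For the $h_3$ piece you use $|h_3|\le \rho\, r^2(r')^2$, the Lipschitz bound on $\K_i(\omega,\omega')-\K_i(\omega_\theta,\omega_\theta)$, Cauchy--Schwarz, and Lemma \ref{lem:EstLocPolarCoord} with $p=2$, which is where the rate $e^{-\tau/2}$ comes from. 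Your Cauchy--Schwarz step is stated more cleanly than the corresponding display in the paper, and the third moment you mention is not actually needed.
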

\begin{proof}
	We first consider the following splitting of the reminder $ \Rem = \sum_{j=1}^6\Rem_k $,
	where we have
	\begin{align*}
		\Rem_1(\tau,\rho) &= -\frac{\dot{Z}(\tau)}{ Z(\tau)}\hat{g} + \left( \dfrac{Z(\tau)}{|\theta|^2}-1 \right) \hat{g}\partial_\rho\hat{g},
		\\
		\Rem_2(\tau,\rho) &= \sum_{i=1}^3\alpha_iZ(\tau)\dfrac{\theta^\top K_i\theta}{|\theta|^2}\int_{\R_+}\int_{\R_+} \, h_1^i(\tau,\rho, r,r ') \, g(\tau,dr) g(\tau,dr'),
		\\
		\Rem_3(\tau,\rho) &=\sum_{i=1}^3\alpha_iZ(\tau)\dfrac{\theta^\top K_i\theta}{|\theta|^2}\int_{\R_+}\int_{\R_+} \, h_2^i(\tau,\rho,r,r ') \, g(\tau,dr) g(\tau,dr'),
		\\
		\Rem_4(\tau,\rho) &=\sum_{i=1}^3 \dfrac{\alpha_i}{Z(\tau)}\int_{\R_+\times\Delta}\int_{\R_+ \times \Delta} \left[\K_i(\omega,\omega') - \K_i(\omega_\theta  ,\omega_\theta) \right] \, h_1^i(\tau,\rho,r,r ')  \, G(\tau,dr,d\omega) G(\tau,dr',d\omega'),
		\\
		\Rem_5(\tau,\rho) &=\sum_{i=1}^3 \dfrac{\alpha_i}{Z(\tau)}\int_{\R_+\times\Delta}\int_{\R_+ \times \Delta} \left[\K_i(\omega,\omega') - \K_i(\omega_\theta  ,\omega_\theta) \right] \, h_2^i(\tau,\rho,r,r ')  \, G(\tau,dr,d\omega) G(\tau,dr',d\omega'),
		\\
		\Rem_6(\tau,\rho) &=\sum_{i=1}^3 \dfrac{\alpha_i}{Z(\tau)}\int_{\R_+\times\Delta}\int_{\R_+ \times \Delta} \left[\K_i(\omega,\omega') - \K_i(\omega_\theta  ,\omega_\theta) \right] \, h_3(\tau,\rho,r,r ')  \, G(\tau,dr,d\omega) G(\tau,dr',d\omega').
	\end{align*}
	We now estimate each term separately.
	
	First of all, we observe for $ \Rem_1 $ that $ \hat{g}(\tau,\rho) \leq C\rho $, $ \norm{\partial_\rho\hat{g}}\leq C $ and hence
	\begin{align*}
		|\Rem_1| \leq C\rho \left| \dfrac{\dot{Z}(\tau)}{Z(\tau)} \right| + \rho\left| \dfrac{Z(\tau)}{|\theta|^2}-1 \right| \leq  C\rho e^{-\tau}.
	\end{align*}
	Here, we used the estimates in Corollary \eqref{cor:MomentEst} for $ Z(\tau)=\sum_{j,k=1}^2\M^2_{jk}(F(\tau)) $.
	
	For $ \Rem_2 $ we make use of
	\begin{align*}
		|h_1^i(\tau,\rho,r,r')| &= \ind_{\{r+r'-\delta_i(\tau)\geq0\}}  r'r^2e^{-\rho (r+ r')} \left|e^{\delta_i(\tau)\rho}-1\right| \leq \ind_{\{r+r'-\delta_i(\tau)\geq0\}}  r'r^2e^{-\rho (r+ r')} \, \rho\delta_i(\tau) e^{\delta_i(\tau)\rho}
		\\
		&\leq r'r^2\delta_i(\tau) \rho.
	\end{align*}
	This yields then
	\begin{align*}
		|\Rem_2(\tau,\rho)|\leq C\rho\M^1(g(\tau)) \M^2(g(\tau))\sum_i \delta_i(\tau) \leq C\rho e^{-\tau}.
	\end{align*}
	Here, we used the fact that $ \M^1(g(\tau))\leq Ce^{\tau} $ and $ \M^2(g(\tau))=1 $ as follows from Corollary \ref{cor:MomentEst} and the definition of $ g(\tau) $.
	
	Considering $ \Rem_3 $ we first observe that
	\begin{align*}
		|h_2^i(\tau,\rho,r,r')| \leq \ind_{\{r+r'-\delta_i(\tau)\geq0\}} \dfrac{1}{2}rr'\delta_i(\tau) \left(r+r'-\delta_i(\tau)\right)\rho.
	\end{align*}
	This yields
	\begin{align*}
		|\Rem_3(\tau,\rho)| \leq C\rho \sum_i\left[ \delta_i(\tau)\M^1(g(\tau))+\delta_i(\tau)^2\M^1(g(\tau))^2  \right]\leq C\rho e^{-\tau}. 
	\end{align*}

	We now turn to $ \Rem_4 $ and use the previous estimate for $ h_1^i $. This yields
	\begin{align*}
		|\Rem_4(\tau,\rho)| &\leq C\rho \sum_i\delta_i(\tau) \int_{\R_+\times\Delta}\int_{\R_+ \times \Delta} r(r')^2 \, G(\tau,dr,d\omega) G(\tau,dr',d\omega')
		\\
		&\leq C\rho \sum_i\delta_i(\tau)\M^1(g(\tau)) \leq C\rho e^{-\tau}.
	\end{align*}
	
	For $ \Rem_5 $ we use the previous estimate for $ h_2^i $ to get
	\begin{align*}
		|\Rem_5(\tau,\rho)| &\leq C\rho \sum_i\delta_i(\tau) \int_{\R_+\times\Delta}\int_{\R_+ \times \Delta} \left|rr'\delta_i(\tau) \left(r+r'-\delta_i(\tau)\right)\right| \, G(\tau,dr,d\omega) G(\tau,dr',d\omega')
		\\
		&\leq C\rho\sum_i\left[ \delta_i(\tau)\M^1(g(\tau))+\delta_i(\tau)^2\M^1(g(\tau))^2  \right]\leq C\rho e^{-\tau}. 
	\end{align*}

	Finally, for $ \Rem_6 $ we first observe that
	\begin{align*}
		|h_3(\tau,\rho,r,r')|\leq C (rr')^2 \rho
	\end{align*}
	and 
	\begin{align*}
		\left| \K_i(\omega,\omega') - \K_i(\omega_\theta  ,\omega_\theta)\right| \leq C \norm{\omega-\omega_\theta}+C \norm{\omega'-\omega_\theta}.
	\end{align*}
	We thus obtain, making use of the symmetry $ (r,\omega)\leftrightarrow(r',\omega') $,
	\begin{align*}
		|\Rem_6(\tau,\rho)|&\leq C\rho\int_{\R_+\times\Delta}\int_{\R_+ \times \Delta} r^2(r')^2 \norm{\omega-\omega_\theta}  \, G(\tau,dr,d\omega) G(\tau,dr',d\omega')
		\\
		&\leq C\rho\left( \int_{\R_+\times\Delta}r^2G(\tau,dr,d\omega) \right)^{3/2}\left( \int_{\R_+\times\Delta}r^2\norm{\omega-\omega_\theta}  \, G(\tau,dr,d\omega) \right)^{1/2} 
		\\
		&\leq C\rho e^{-\tau/2}.
	\end{align*}
	In the last inequality we used Lemma \ref{lem:EstLocPolarCoord}. This concludes the proof.
\end{proof}

\begin{lem}\label{lem:EstRem2}
	Under the assumptions of Theorem \ref{thm:ConvSelfSim} we have
	\begin{align*}
		|\partial_\rho\Rem(\tau,\rho)| \leq C(1+\rho) e^{-\tau/2},
	\end{align*}
	for all $ \tau\geq0 $, $ \rho\geq0 $ and some constant $ C>0 $.
\end{lem}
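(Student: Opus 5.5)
We mirror the proof of Lemma \ref{lem:EstRem1}. We use the same splitting $\Rem=\sum_{k=1}^6\Rem_k$, differentiate each term in $\rho$ under the integral sign, and bound each derivative separately. Differentiation under the integral is justified by the regularity $\hat g\in C_b(\R_+;C^3_b(\R_+))$ from Lemma \ref{lem:ProjEqLapTransf}, together with the moment bounds of Corollary \ref{cor:MomentEst}. Differentiating $h_j^i$ or $h_3$ in $\rho$ produces at most one extra factor of $r$, $r'$ or $\delta_i(\tau)$. Since $F$ has bounded fourth moments, these extra factors cost at most one more moment than in Lemma \ref{lem:EstRem1}.

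For $\Rem_1$, we have $\partial_\rho\Rem_1=-\frac{\dot Z}{Z}\partial_\rho\hat g+(\frac{Z}{|\theta|^2}-1)\big((\partial_\rho\hat g)^2+\hat g\,\partial^2_\rho\hat g\big)$. We bound $|\partial_\rho\hat g|\le \M^2(g)=1$ and $|\partial_\rho^2\hat g|\le C\M^3(g)\le C$, and use $\hat g\le\rho$. This gives $C(1+\rho)e^{-\tau}$.

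For $h_1^i$, write
\[
\partial_\rho h_1^i= r'r^2\Big[-(r+r')e^{-\rho(r+r')}(1-e^{\delta_i\rho})-\delta_i e^{-\rho(r+r'-\delta_i)}\Big].
\]
On $\{r+r'\ge\delta_i\}$ we use $e^{-\rho(r+r'-\delta_i)}\le1$ and $(r+r')e^{-\rho(r+r')}|e^{\delta_i\rho}-1|\le \delta_i\rho(r+r')e^{-\rho(r+r'-\delta_i)}\le\delta_i\rho(r+r')$. Hence $|\partial_\rho h_1^i|\le C\delta_i(1+\rho)\,r'r^2(1+r+r')$. Integrating against $g\otimes g$ or $G\otimes G$ then requires moments of the form $\M^1\M^3$ and $\M^2\M^2$. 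These are bounded by $Ce^{\tau}$, since $\M^1\le Ce^{\tau}$ and $\M^2,\M^3$ are bounded. Because $\delta_i\le Ce^{-2\tau}$, this gives $C(1+\rho)e^{-\tau}$ for $\Rem_2$ and $\Rem_4$.

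For $h_2^i$, we have $|\partial_\rho h_2^i|\le \tfrac12 rr'\delta_i(r+r')$, which is bounded by $C\delta_i(\M^1\M^2)\le Ce^{-\tau}$; this handles $\Rem_3$ and $\Rem_5$.

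The main term is $\Rem_6$. Here
\[
\partial_\rho h_3=-r'r^3e^{-r\rho}(1-e^{-r'\rho})+(r')^2r^2e^{-\rho(r+r')},
\]
so $|\partial_\rho h_3|\le r^3(r')^2\rho+r^2(r')^2$. We combine this with $|\K_i(\omega,\omega')-\K_i(\omega_\theta,\omega_\theta)|\le C(\norm{\omega-\omega_\theta}+\norm{\omega'-\omega_\theta})$ and apply Cauchy--Schwarz as in Lemma \ref{lem:EstRem1}. A typical term is
\[
\int r^3\norm{\omega-\omega_\theta}\,G\le\Big(\int r^3G\Big)^{1/2}\Big(\int r^3\norm{\omega-\omega_\theta}^2G\Big)^{1/2}\le Ce^{-\tau/2},
\]
using the $p=3$ case of Lemma \ref{lem:EstLocPolarCoord} and the bounded third moment. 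The other terms are treated the same way, with the weights $r^2$ or $r^3$ placed on whichever variable carries the angular defect, and the remaining factor bounded by $\M^2$ or $\M^3$ of $G$, both of which are bounded. The bottleneck is that the $r^3$ weight forces us to use the $p=3$ localization estimate rather than $p=2$. Since Lemma \ref{lem:EstLocPolarCoord} is available for both $p=2$ and $p=3$, this yields $C(1+\rho)e^{-\tau/2}$, and summing over the six terms gives the claim.
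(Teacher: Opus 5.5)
Your proposal is correct and follows the paper's proof essentially step for step: the same six-term splitting, the same pointwise bounds on $\partial_\rho h_1^i$, $\partial_\rho h_2^i$ and $\partial_\rho h_3$ combined with the moment bounds of Corollary \ref{cor:MomentEst}, and Cauchy--Schwarz with the $p=2$ and $p=3$ cases of Lemma \ref{lem:EstLocPolarCoord} for $\Rem_6$. Your bound $|\partial_\rho h_1^i|\le C\delta_i(1+\rho)\,r'r^2(1+r+r')$ is slightly more careful than the paper's displayed estimate, which drops the $\rho$-independent contribution $\delta_i r'r^2 e^{-\rho(r+r'-\delta_i)}$; this does not affect the final $C(1+\rho)e^{-\tau/2}$ bound.
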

\begin{proof}
	We use the same splitting $ \Rem = \sum_{j=1}^6\Rem_k $ as in the proof of Lemma \ref{lem:EstRem1}. We estimate term by term.
	
	First of all, we have
	\begin{align*}
		\partial_\rho\Rem_1 = -\frac{\dot{Z}(\tau)}{ Z(\tau)}\partial_\rho\hat{g} + \left( \dfrac{Z(\tau)}{|\theta|^2}-1 \right) \left( (\partial_\rho\hat{g})^2+\hat{g}\partial_\rho^2\hat{g} \right).
	\end{align*}
	Hence we obtain
	\begin{align*}
		|\partial_\rho\Rem_1| \leq Ce^{-\tau}.
	\end{align*}
	
	For the remaining terms we first observe
	\begin{align*}
		\partial_\rho h_1^i &=  -\ind_{\{r+r'-\delta_i\geq0\}}  \left[ r'r^2(r+ r')e^{-\rho (r+ r')} \left(1-e^{\delta_i\rho}\right)+r'r^2\delta_ie^{-\rho (r+ r'-\delta_i)}  \right],
		\\
		\partial_\rho h_2^i &=  \ind_{\{r+r'-\delta_i\geq0\}} \dfrac{1}{2}rr'\delta_i\left(r+r'-\delta_i \right)  e^{-(r+r'-\delta_i)\rho},
		\\
		\partial_\rho h_3 &= -r'r^3e^{-r\rho} \left(1- e^{-r'\rho}\right)-(r'r)^2e^{-(r+r')\rho}.
	\end{align*}
	We then obtain the estimates
	\begin{align*}
		|\partial_\rho h_1^i| &\leq C\rho\delta_i \left( r'r^3 + (r'r)^2 + r'r^2 \right),
		\\
		|\partial_\rho h_2^i| &\leq \dfrac{1}{2}rr'\delta_i \left(r+r'-\delta_i \right),
		\\
		|\partial_\rho h_3| &\leq \rho(r')^2r^3 + (r'r)^2. 
	\end{align*}
	This yields then 
	\begin{align*}
		|\Rem_2| &\leq C\rho \sum_i\delta_i \left( \M^1(g(\tau))\M^3(g(\tau)) + \M^2(g(\tau))^2 + \M^1(g(\tau))\M^2(g(\tau))  \right)  \leq C\rho e^{-\tau},
		\\
		|\Rem_3| &\leq C\sum_i\delta_i  \M^1(g(\tau))\M^2(g(\tau))+ C\sum_i\delta_i^2  \M^1(g(\tau))^2\leq Ce^{-\tau}.
	\end{align*}
	Here, we used the estimates on the moments, see Corollary \ref{cor:MomentEst} together with the definition of $ g $. The above estimates yield in the same way
	\begin{align*}
		|\Rem_4| &\leq C\rho \sum_i\delta_i \left( \M^1(g(\tau))\M^3(g(\tau)) + \M^2(g(\tau))^2 + \M^1(g(\tau))\M^2(g(\tau))  \right)  \leq C\rho e^{-\tau},
		\\
		|\Rem_5| &\leq C \sum_i\delta_i  \M^1(g(\tau))\M^2(g(\tau))+ C\sum_i \delta_i^2  \M^1(g(\tau))^2\leq Ce^{-\tau}.
	\end{align*}
	Finally, we have
	\begin{align*}
		|\Rem_6| &\leq C\int_{\R_+\times\Delta}\int_{\R_+ \times \Delta} \left( \rho(r')^2r^3 + (r'r)^2\right) \left(  \norm{\omega-\omega_\theta}+\norm{\omega'-\omega_\theta}\right)   \, G(\tau,dr,d\omega) G(\tau,dr',d\omega')
		\\
		&\leq C\rho\int_{\R_+\times\Delta}(r^2+r^3)\norm{\omega-\omega_\theta}G(\tau,dr,d\omega)+C\int_{\R_+\times\Delta}r^2\norm{\omega-\omega_\theta}G(\tau,dr,d\omega)
		\\
		&\leq C\rho e^{-\tau/2}+Ce^{-\tau/2}.
	\end{align*}
	Here, we used Lemma \ref{lem:EstLocPolarCoord}. Putting all estimates together concludes the proof.
\end{proof}

We now give the proof of Theorem \ref{thm:ConvSelfSim}.
\begin{proof}[Proof of Theorm \ref{thm:ConvSelfSim}]
	As mentioned before it suffices to show that $ \partial_\rho\hat{g}(\tau,\rho) $ converges pointwise to $ 1/\sqrt{1+2K_0\rho} $. In fact, below we prove the following. There are constants $ C,\, \kappa>0 $  such that for all $ M\geq1 $ and all $ \tau\geq0 $ we have
	\begin{align*}
		\sup_{\rho\in[0,M]}\left| \partial_\rho\hat{g}(\tau,\rho) - \dfrac{1}{\sqrt{1+2K_0\rho}}\right| \leq CM^{\kappa}e^{-\tau/10}.
	\end{align*}
	Let us give a short overview of the proof. The idea is to use the equation satisfied by $ \hat{g} $ in Lemma \ref{lem:ProjEqLapTransf}. Since $ \hat{g} $ is a classical solution to this equation, $ \hat{g} $ can be quantified using the method of characteristics. In order to turn this into a perturbative analysis, we do not study the dynamics starting at $ \tau=0 $ but at $ \tau=T $ large. This ensures that the formal initial condition $ \hat{g}(T,\rho) $ is arsing from a distribution that is almost localized on the concentration line $ \{\lambda\theta \, :\, \lambda\geq0\} $ with controlled values of the third moments. On the level of the desingularized Laplace transform this means that
	\begin{align*}
		\partial^2_\rho \hat{g}(T,0) = \dfrac{1}{Z(\tau)}\int_{\R_+ \times \Delta} r^3\, G(T,dr,d\omega) = \dfrac{1}{Z(\tau)}\int_{\R^2_+}|\eta|^3 \, F(T,d\eta) \approx c_0|\theta| = K_0.
	\end{align*} 
	Recall that by Corollary \ref{cor:MomentEst} we have $ \int_{\R^2_+}|\eta|^3 \, F(T,d\eta) \to c_0|\theta|^3 $ and $ Z(\tau)\to |\theta|^2 $. Observe that the value $ K_0 $ characterizes the limiting self-similar distribution, see the formula for $ g_\infty $ in Theorem \ref{thm:ConvSelfSim}. Note that $ \partial_\rho\hat{g}(T,0)=1 $ for all $ T\geq0 $ by definition of $ g $. 
	
	In addition, using the starting point $ \tau=T $ also ensures that the reminder $ \Rem(T,\rho) $ is small, i.e.\ of order $ e^{-T/2} $. This allows to prove that the characteristics of the Burger's type equation and hence also the solution can be controlled on a time interval of size $ c T $, $ c>0 $. And we obtain for all $ \tau \leq c T $
	\begin{align*}
		\partial_\rho\hat{g}(T+\tau,\rho) - \dfrac{1}{\sqrt{1+2K_0\rho}} = \mathcal{O}\left( e^{-\tau} \right) + \mathcal{O}\left( e^{-T/10} \right).
	\end{align*}
	Setting $ \tau = cT $ will then provide the above estimate.
	
	We now provide the full details and to this end we divide the proof into several steps.
	
	\textit{Step 1. Setup.} Let us fix $ M\geq1 $ and $ T\geq1 $. We define 
	\begin{align*}
		H_T(\tau,\rho) = \hat{g}(T+\tau,\rho), \quad \Rem_T(\tau,\rho) = \Rem(T+\tau,\rho).
	\end{align*}
	We know that $ H_T $ satisfies the equation
	\begin{align}\label{eq:sec3:ProofConvEq}
		\partial_\tau H_T = H_T  +  \left( H_T - 2 \rho \right) \partial_\rho H_T  + \Rem_T.
	\end{align}
	We also have by Taylor's theorem
	\begin{align*}
		H_T(0,\rho) = \rho - \dfrac{1}{2}K_0\rho^2 + h_T(\rho).
	\end{align*}
	Here, we defined the function
	\begin{align}\label{eq:sec3:ProofConvInitialProfil}
		h_T(\rho) = \dfrac{1}{2}\left( \partial_\rho^2H_T(0,0)-K_0 \right) \rho^2 + \dfrac{\rho^3}{2}\int_0^1 \partial_\rho^3H_T(0,s\rho)(1-s)^2\, ds.
	\end{align} 
	This satisfies the estimates
	\begin{align}\label{eq:sec3:ProofConvInitialEst}
		|h_T(\rho)| \leq Ce^{-T}\rho^2+C\rho^3, \quad |\partial_\rho h_T(\rho)| \leq Ce^{-T}\rho+C\rho^2.
	\end{align}

	\textit{Step 2. Characteristics.} The characteristics of the equation \eqref{eq:sec3:ProofConvEq} are defined by the following ODE
	\begin{align}\label{eq:sec3:ProofConvCharac}
		\dfrac{d}{d\tau}P_T(\tau,\rho) = 2P_T(\tau,\rho) - H_T(\tau, P_T(\tau,\rho)), \quad P_T(0,\rho)=\rho.
	\end{align}
	Since the right-hand side is a continuously differentiable function of $ P_T(\tau,\rho) $, the function $ P_T(\tau,\rho) $ is well-defined and differentiable in both arguments. Since $ 0\leq \partial_\rho H_T(\tau,\rho)\leq1 $ we have $ 0\leq H_T(\tau,\rho)\leq \rho $. This implies
	\begin{align}\label{eq:sec3:ProofConvCharEst1}
		e^\tau\rho \leq P_T(\tau, \rho)\leq e^{2\tau} \rho
	\end{align}
	for all $ \tau, \, \rho \geq0 $. We differentiate equation \eqref{eq:sec3:ProofConvCharac} and obtain that
	\begin{align*}
		\dfrac{d}{d\tau}\partial_\rho P_T(\tau,\rho) = 2\partial_\rho P_T(\tau,\rho)-\partial_\rho H_T(\tau,P_T(\tau,\rho))\partial_\rho P_T(\tau,\rho). 
	\end{align*}
	Notice that since $\partial_\rho H_T(\tau,P_T(\tau,\rho)) = \partial_\rho \hat{g} (\tau+ T ,P_T(\tau,\rho)) $, we have that $\partial_\rho H_T(\tau,P_T(\tau,\rho))  \in (0,1) $, hence 
	\begin{align}\label{eq:sec3:ProofConvCharEst2}
		e^\tau\leq \partial_\rho P_T(\tau,\rho) \leq e^{2\tau}.
	\end{align}
	The equation \eqref{eq:sec3:ProofConvEq} implies 
	\begin{align*}
		\dfrac{d}{d\tau} \left[ H_T(\tau,P_T(\tau,\rho)) \right]  = H_T(\tau,P_T(\tau,\rho))+\Rem_T\left( \tau, P_T(\tau,\rho) \right). 
	\end{align*}
	Hence, we obtain
	\begin{align}\label{eq:sec3:ProofConvProfil}
		H_T(\tau,P_T(\tau,\rho)) = e^{\tau} H_T(0,\rho) + e^\tau\int_0^\tau e^{-\sigma}\Rem_T\left( \sigma, P_T(\sigma,\rho) \right) \, d\sigma.
	\end{align}
	From \eqref{eq:sec3:ProofConvCharac} we obtain
	\begin{align*}
		P_T(\tau,\rho) &= e^{2\tau}\left( \rho-H_T(0,\rho) \right) + e^\tau H_T(0,\rho) + e^{2\tau}\int_0^\tau\left( e^{-\sigma}-e^{-\tau} \right) e^{-\sigma}  \Rem_T\left( \sigma, P_T(\sigma,\rho) \right)\, d\sigma
		\\
		&=  \dfrac{K_0}{2}e^{2\tau}\rho^2 +e^\tau\rho + \bar{\Err}_T(\tau,\rho),
		\\
		\bar{\Err}_T(\tau,\rho) &=-e^{2\tau}h_T(\rho) - \dfrac{K_0}{2}e^\tau\rho^2+ e^\tau h_T(\rho) + e^{2\tau}\int_0^\tau\left( e^{-\sigma}-e^{-\tau} \right) e^{-\sigma}  \Rem_T\left( \sigma, P_T(\sigma,\rho) \right)\, d\sigma.
	\end{align*}
	Here, we used \eqref{eq:sec3:ProofConvInitialProfil}. The goal now is to give a asymptotic formula for inverse $ \rho\mapsto P_T(\tau,\rho) $. Note that by \eqref{eq:sec3:ProofConvCharEst2} we already know that this function is invertible on $ \R_+ $. To this end, it is convenient to define
	\begin{align*}
		Q_T(\tau,\rho) &:= P_T(\tau,e^{-\tau}\rho) = Q_0(\rho) + \Err_T(\tau, \rho),
		\\
		Q_0(\rho) &:= \dfrac{K_0}{2}\rho^2 + \rho,
		\\
		\Err_T(\tau, \rho) &:= -e^{2\tau}h_T(e^{-\tau}\rho) - \dfrac{K_0}{2}e^{-\tau}\rho^2+ e^\tau h_T(e^{-\tau}\rho) + e^{2\tau}\int_0^\tau\left( e^{-\sigma}-e^{-\tau} \right) e^{-\sigma}  \Rem_T\left( \sigma, P_T(\sigma,e^{-\tau}\rho) \right)\, d\sigma.
	\end{align*}
	Observe that $ Q_0:\R_{+}\to \R_{+} $ is bijective and
	\begin{align*}
		Q_0^{-1}(\rho) = \dfrac{2\rho}{1+\sqrt{1+2K_0\rho}}, \quad \partial_\rho Q_0^{-1}(\rho) = \dfrac{1}{\sqrt{1+2K_0\rho}}.
	\end{align*}
	
	\textit{Step 3. Estimates on $ \Err_T $ and $ Q_T $.} We now give estimates on $ \Err_T $ and $ \partial_\rho\Err_T $. For the first we make use of Lemma \ref{lem:EstRem1} and \eqref{eq:sec3:ProofConvInitialEst}, \eqref{eq:sec3:ProofConvCharEst1} to get
	\begin{align*}
		|\Err_T(\tau,\rho)| &\leq C\left( e^{-T}\rho^2+e^{-\tau}\rho^3+e^{-\tau}\rho^2\right) + Ce^{2\tau}\int_0^\tau e^{-2\sigma} e^{2\sigma}e^{-\tau}e^{-T/2}e^{-\sigma/2}\rho \, d\sigma
		\\
		&\leq C\left( e^{-T}\rho^2+e^{-\tau}\rho^3+e^{-\tau}\rho^2 + e^\tau e^{-T/2} \rho \right).
	\end{align*}
	For the derivative we note that
	\begin{align*}
		\partial_\rho \Err_T(\tau,\rho) &= -e^\tau \partial_\rho h_T(e^{-\tau}\rho) - K_0e^{-\tau}\rho + \partial_\rho h_T(e^{-\tau}\rho) 
		\\
		&\quad + e^\tau \int_0^\tau \left( e^{-\sigma}-e^{-\tau} \right)e^{-\sigma}\partial_{\rho} \Rem_T(\sigma,P(\sigma,e^{-\tau}\rho)) \partial_{\rho}P_T(\sigma,e^{-\tau}\rho)\, d\sigma.
	\end{align*}
	We then use Lemma \ref{lem:EstRem2} and \eqref{eq:sec3:ProofConvInitialEst}, \eqref{eq:sec3:ProofConvCharEst2}
	\begin{align*}
		|\partial_\rho \Err_T(\tau,\rho)| &\leq C\left( e^{-T}\rho + e^{-\tau}\rho^2+e^{-\tau}\rho \right) + Ce^\tau\int_0^\tau e^{-2\sigma}\left( 1+e^{2\sigma}e^{-\tau}\rho \right) e^{-T/2}e^{-\sigma/2}e^{2\sigma}\, d\sigma  
		\\
		&\leq C\left( e^{-T}\rho + e^{-\tau}\rho^2+e^{-\tau}\rho + e^{\tau}e^{-T/2}+e^{3\tau/2}e^{-T/2}\rho \right).
	\end{align*}
	In particular, for $ \tau \in [0,T/8] $ we obtain for all $ \rho\geq0 $
	\begin{align}\label{eq:sec3:ProofConvErrEst}
		|\Err_T(\tau,\rho)|\leq C\left( 1+\rho^2\right) \rho e^{-T/4}, \quad |\partial_{\rho}\Err_T(\tau,\rho)|\leq C\left( 1+\rho^2\right) e^{-T/4}.
	\end{align}
	This implies in particular
	\begin{align}\label{eq:sec3:ProofConvEstChar}
		|Q_T(\tau,\rho)| \leq C(1+\rho^3), \quad |\partial_{\rho}Q_T(\tau,\rho)| \leq C(1+\rho^2)
	\end{align}
	
	\textit{Step 4. Inverting characteristics.} We now study the inverse of $ Q_T(\tau,\rho) = Q_0(\rho) + \Err_T(\tau,\rho) $ for $ \tau \in [0,T/8] $ and $ \rho\in [0,M] $. To this end, we make use of the estimates in Step 3, in particular $ T $ will be chosen larger than $ T_0(M) $. We show below that in this case
	\begin{align}\label{eq:sec3:ProofConvEstInvChar}
		\begin{split}
			\sup_{\rho\in[0,M]}\sup_{ \tau \in [0,T/8]} \left| Q_T^{-1}(\tau,\rho) - Q_0^{-1}(\tau,\rho) \right| &\leq CM^{3/2}e^{-T/4},
			\\
			\sup_{\rho\in[0,M]}\sup_{ \tau \in [0,T/8]} \left| \partial_{\rho}Q_T^{-1}(\tau,\rho) - \partial_{\rho}Q_0^{-1}(\tau,\rho) \right| &\leq CM^{3/2}e^{-T/4}.
		\end{split}
	\end{align}
	To this end, let use define
	\begin{align*}
		U_T(\tau,\rho) = Q_T\left(\tau,Q_0^{-1}(\rho)\right) = \rho + \Err_T\left(\tau,Q_0^{-1}(\rho)\right).
	\end{align*}
	Observe that
	\begin{align*}
		Q_0^{-1}(\rho) \leq \dfrac{C}{\sqrt{1+\rho}}, \quad \partial_{\rho}Q_0^{-1}(\rho) \leq \dfrac{C}{\sqrt{1+\rho}}.
	\end{align*}
	We hence obtain with \eqref{eq:sec3:ProofConvErrEst}
	\begin{align*}
		\sup_{\rho\in[0,2M]}\sup_{ \tau \in [0,T/8]} \left| U_T(\tau,\rho) - \rho \right| &\leq CM^{3/2}e^{-T/4},
		\\
		\sup_{\rho\in[0,2M]}\sup_{ \tau \in [0,T/8]} \left| \partial_{\rho} U_T(\tau,\rho) - 1 \right| &\leq CM^{3/2}e^{-T/4}.
	\end{align*}
	In particular, for $ T\geq T_0(M)=c\ln(1+M) $, where $ c>0 $ is some large constant, we get
	\begin{align*}
		\dfrac{1}{2}\leq\partial_{\rho} U_T(\tau,\rho) \leq \dfrac{3}{2}
	\end{align*}
	and thus for $ \rho\in [0,M] $
	\begin{align*}
		\dfrac{3}{2}\rho\leq U_T^{-1}(\tau,\rho) \leq 2\rho\leq 2M.
	\end{align*}
	This then yields
	\begin{align}\label{eq:sec3:ProofConvEstInvCharPrep}
		\begin{split}
			\sup_{\rho\in[0,M]}\sup_{ \tau \in [0,T/8]} \left| U_T^{-1}(\tau,\rho) - \rho \right| &\leq CM^{3/2}e^{-T/4}.
			\\
			\sup_{\rho\in[0,M]}\sup_{ \tau \in [0,T/8]} \left| \partial_{\rho} U_T^{-1}(\tau,\rho) - 1 \right| &\leq CM^{3/2}e^{-T/4}.
		\end{split}
	\end{align}
	We hence obtain 
	\begin{align*}
		Q_T^{-1}(\tau,\rho) - Q_0^{-1}(\tau,\rho) &= Q_0^{-1}(\tau,U_T^{-1}(\tau,\rho))-Q_0^{-1}(\tau,\rho) = \int_\rho^{U_T^{-1}(\tau,\rho)}\partial_{\rho}Q_0^{-1}(\tau,\xi)\, d\xi,
		\\
		\partial_{\rho}Q_T^{-1}(\tau,\rho) - \partial_{\rho}Q_0^{-1}(\tau,\rho) &= \partial_{\rho}Q_0^{-1}(\tau,U_T^{-1}(\tau,\rho))\partial_{\rho}U_T^{-1}(\tau,\rho)-\partial_{\rho}Q_0^{-1}(\tau,\rho).
	\end{align*}
	Using the estimates \eqref{eq:sec3:ProofConvEstInvCharPrep} and $ \partial_{\rho}^2Q_0^{-1}(\rho)\leq C $ yields \eqref{eq:sec3:ProofConvEstInvChar} for $ T\geq T_0(M) $.
	
	Recalling that $ P_T(\tau,\rho) = Q_T(\tau,e^\tau\rho) $ yields with \eqref{eq:sec3:ProofConvEstInvChar}
	\begin{align}\label{eq:sec3:ProofConvEstFullInvChar}
		\begin{split}
			\sup_{\rho\in[0,M]}\sup_{ \tau \in [0,T/8]} \left| P_T^{-1}(\tau,\rho) - e^{-\tau}Q_0^{-1}(\tau,\rho) \right| &\leq CM^{3/2}e^{-T/4},
			\\
			\sup_{\rho\in[0,M]}\sup_{ \tau \in [0,T/8]} \left| \partial_{\rho}P_T^{-1}(\tau,\rho) - e^{-\tau}\partial_{\rho}Q_0^{-1}(\tau,\rho) \right| &\leq CM^{3/2}e^{-T/4}
		\end{split}
	\end{align}
	for $ T\geq T_0(M) $.
	
	\textit{Step 5. Conclusion.} We now obtain from \eqref{eq:sec3:ProofConvProfil}
	\begin{align*}
		\partial_{\rho}H_T(\tau,\rho) &= \partial_{\rho}H_T\left(0,P_T^{-1}(\tau,\rho)\right)e^{\tau}\partial_{\rho}P_T^{-1}(\tau,\rho) 
		\\
		&\quad + e^\tau\int_0^\tau e^{-\sigma}\partial_{\rho}\Rem_T\left( \sigma, P_T\left(\sigma,P_T^{-1}(\tau,\rho)\right) \right)\partial_{\rho}P_T\left(\sigma,P_T^{-1}(\tau,\rho)\right) \partial_{\rho}P_T^{-1}(\tau,\rho) \, d\sigma
		\\
		&= \partial_{\rho}H_T\left(0,P_T^{-1}(\tau,\rho)\right)e^{\tau}\partial_{\rho}P_T^{-1}(\tau,\rho) 
		\\
		&\quad + \int_0^\tau \partial_{\rho}\Rem_T\left( \sigma, Q_T(\sigma,e^{\sigma}e^{-\tau}Q_T^{-1}(\tau,\rho)) \right) \partial_{\rho}Q_T(\sigma,e^{\sigma}e^{-\tau}Q_T^{-1}(\tau,\rho)) \partial_{\rho}Q_T^{-1}(\tau,\rho) \, d\sigma
	\end{align*}
	Note that
	\begin{align*}
		\dfrac{1}{\sqrt{1+2K_0\rho}} = \partial_{\rho}H_T(0,0)\partial_{\rho}Q_0^{-1}(\tau,\rho).
	\end{align*}
	Thus, using Lemma \ref{lem:EstRem2} and \eqref{eq:sec3:ProofConvEstFullInvChar} we get for $ \rho\in [0,M] $ and $ T\geq T_0(M) $
	\begin{align*}
		&\left| \partial_{\rho}H_T(\tau,\rho) - 	\dfrac{1}{\sqrt{1+2K_0\rho}} \right| 
		\\
		& \leq \norm[\infty]{\partial_{\rho}^2H_T}P_T^{-1}(\tau,\rho)\partial_{\rho}Q_0^{-1}(\tau,\rho)  + \norm[\infty]{\partial_{\rho}H_T}\left| e^\tau \partial_{\rho}P_T^{-1}(\tau,\rho)-\partial_{\rho}Q_0^{-1}(\tau,\rho) \right| 
		\\
		&\quad + e^{-T/2} \int_0^\tau \left( 1+ Q_T(\sigma,e^{\sigma}e^{-\tau}Q_T^{-1}(\tau,\rho))^2 \right) \partial_{\rho}Q_T(\sigma,e^{\sigma}e^{-\tau}Q_T^{-1}(\tau,\rho)) \partial_{\rho}Q_T^{-1}(\tau,\rho) \, d\sigma.
	\end{align*}
	In order to estimate the first two terms we use \eqref{eq:sec3:ProofConvEstFullInvChar}. To estimate the last term we use \eqref{eq:sec3:ProofConvEstChar} and \eqref{eq:sec3:ProofConvEstInvChar} for $ \rho\in [0,M] $
	\begin{align*}
		Q_T(\sigma,e^{\sigma}e^{-\tau}Q_T^{-1}(\tau,\rho)) &\leq C\left( 1+ \left( e^{\sigma}e^{-\tau}Q_T^{-1}(\tau,\rho) \right)^3 \right) \leq CM^{9/2},
		\\
		|\partial_{\rho}Q_T(\sigma,e^{\sigma}e^{-\tau}Q_T^{-1}(\tau,\rho))| &\leq C\left( 1+ \left( e^{\sigma}e^{-\tau}Q_T^{-1}(\tau,\rho) \right)^2\right) \leq CM^{3},
		\\
		|\partial_{\rho}Q_T^{-1}(\tau,\rho)| &\leq CM^{3/2}.
	\end{align*}
	We thus obtain for $ \tau\in [0,T/8] $
	\begin{align*}
		\sup_{\rho\in[0,M]}\left| \partial_{\rho}H_T(\tau,\rho) - 	\dfrac{1}{\sqrt{1+2K_0\rho}} \right|  \leq C\left( e^{-\tau}+M^{3/2}e^{-T/4}+M^{14}\tau e^{-T/2}\right).
	\end{align*}
	This implies when setting $ \tau = T/8 $
	\begin{align*}
		\sup_{\rho\in[0,M]} \left| \partial_{\rho}\hat{g}(T+T/8),\rho) - 	\dfrac{1}{\sqrt{1+2K_0\rho}} \right| \leq CM^{14} e^{-T/8},
	\end{align*}
	for $ T\geq T_0(M)=c\ln(1+M) $. Since the left-hand side is bounded we can change constants to obtain for all $ \tau\geq0 $
	\begin{align*}
		\sup_{\rho\in[0,M]}\left| \partial_{\rho}\hat{g}(\tau,\rho) - 	\dfrac{1}{\sqrt{1+2K_0\rho}} \right|  \leq CM^{\kappa}e^{-\tau/10},
	\end{align*}
	where $ \kappa=14+c/8 $. This concludes the proof.
\end{proof}

We now give the proof of Theorem \ref{thm:IntrodLocaliSelfSim}.

\begin{proof}[Proof of Theorem \ref{thm:IntrodLocaliSelfSim}]
	The statements in (i) and (ii) are exactly as in Corollary \ref{cor:MomentEst} and Proposition \ref{pro:Localization}, respectively.
	
	For (iii) we argue as follows. Let
	\begin{align*}
		F_\infty(d\eta) = |\eta|F_s(|\eta|) \delta\left( \dfrac{\eta}{|\eta|}-\dfrac{\theta}{|\theta|} \right) \, d\eta,
		\quad
		\tilde{F}(\tau,d\eta) = \dfrac{|\eta|^2 \, F(\tau,d \eta) }{Z(\tau)}.
 	\end{align*} 
 	Here, $ F_s $ is given in \eqref{eq:sec1:McLeod} with $ K_0=c_0|\theta| $. Note that $ F_s(r) = g_\infty(r)/r^2 $ with $ g_\infty $ as in Theorem \ref{thm:ConvSelfSim}. Consequently, $ F_\infty $ is a probability measure. Recall that $ d\eta = r drd\omega $ using the change of variables $ T(\eta)=(r,\omega)$.
 	
 	For $ \varphi\in C_c^1(\R_{+}^2) $ we have
	\begin{align*}
		\left| \int_{\R^2_+} \varphi(\eta) \, \tilde{F}(\tau,d\eta) - \int_{\R^2_+} \varphi\left(|\eta|\dfrac{\theta}{|\theta|}\right) \, \tilde{F}(\tau,d\eta)\right|  \leq \norm[\infty]{\nabla\varphi}\int_{\R^2_+} |\eta|\norm{\dfrac{\eta}{|\eta|}-\dfrac{\theta}{|\theta|}} \, \tilde{F}(\tau,d\eta).
	\end{align*}
	The latter goes to zero as $ \tau\to \infty $ by Proposition \ref{pro:Localization} and the moment bounds in Corollary \ref{cor:MomentEst}. Furthermore, we have
	\begin{align*}
		\int_{\R^2_+} \varphi\left(|\eta|\dfrac{\theta}{|\theta|}\right) \, \tilde{F}(\tau,d\eta) - \int_{\R^2_+} \varphi\left(|\eta|\dfrac{\theta}{|\theta|}\right) \, F_\infty(\tau,d\eta) = \int_0^\infty \varphi\left(r\dfrac{\theta}{|\theta|}\right) r^2g(\tau,dr) - \int_0^\infty \varphi\left(r\dfrac{\theta}{|\theta|}\right) g_\infty(dr),
	\end{align*}
	with $ g $ as defined in Definition \ref{def:PolarCoordDistr}. The latter expression converges to zero by Theorem \ref{thm:ConvSelfSim}. Using the fact that the family of probability measures $ \tilde{F}(\tau) $ is tight by Corollary \ref{cor:MomentEst}, we hence obtain $ \tilde{F}(\tau) \to F_\infty $ as $ \tau \to \infty $ in the sense of measures. This concludes the proof.
\end{proof}

We now collect our results to yields Theorem \ref{thm:IntrodLocaliSelfSim}.

\begin{proof}[Proof of Theorem \ref{thm:IntrodLocaliSelfSim}]
	Part (i) is a result of Corollary \ref{cor:MomentEst}, while part (ii) is just Proposition \ref{pro:Localization}. Finally, statement (iii) is a consequence of Theorem \ref{thm:ConvSelfSim}. 
\end{proof}

\section*{Acknowledgments}
B. Kepka gratefully acknowledges support of the SNSF through grant PCEFP2\_203059 and the NCCR SwissMAP.
E. Franco gratefully acknowledge the support of the grant CRC 1720 "Analysis of Criticality: from Complex Phenomena to Models and Estimates" (Project-ID
539309657) of the University of Bonn funded through the Deutsche Forschungsgemeinschaft (DFG, German Research Foundation)  and Germany's Excellence Strategy-EXC-2047/2-390685813.
The funders had no role in study design, analysis, decision to publish, or preparation of the manuscript.

\addtocontents{toc}{\protect\setcounter{tocdepth}{1}}

\bibliographystyle{habbrv}
\bibliography{References.bib}

\begin{thebibliography}{10}

\bibitem{bertoin2009two}
J.~Bertoin.
\newblock Two solvable systems of coagulation equations with limited
  aggregations.
\newblock In {\em Annales de l'Institut Henri Poincar{\'e} C, Analyse non
  Lin{\'e}aire}, volume~26, pages 2073--2089. Elsevier, 2009.

\bibitem{blatz1945note}
P.~Blatz and A.~Tobolsky.
\newblock Note on the kinetics of systems manifesting simultaneous
  polymerization-depolymerization phenomena.
\newblock {\em The journal of physical chemistry}, 49(2):77--80, 1945.

\bibitem{cristian2024coagulation}
I.~Cristian and J.~J.~L. Vel{\'a}zquez.
\newblock Coagulation equations for non-spherical clusters.
\newblock {\em Archive for Rational Mechanics and Analysis}, 248(6):113, 2024.

\bibitem{delacour2022mathematical}
J.~Delacour, M.~Doumic, S.~Martens, C.~Schmeiser, and G.~Zaffagnini.
\newblock A mathematical model of p62-ubiquitin aggregates in autophagy.
\newblock {\em Journal of Mathematical Biology}, 84(1):3, 2022.

\bibitem{ferreira2021localization}
M.~A. Ferreira, J.~Lukkarinen, A.~Nota, and J.~J.~L. Vel{\'a}zquez.
\newblock Localization in stationary non-equilibrium solutions for
  multicomponent coagulation systems.
\newblock {\em Communications in {M}athematical {P}hysics}, 388(1):479--506,
  2021.

\bibitem{ferreira2021stationary}
M.~A. Ferreira, J.~Lukkarinen, A.~Nota, and J.~J.~L. Vel{\'a}zquez.
\newblock Stationary non-equilibrium solutions for coagulation systems.
\newblock {\em Archive for {R}ational {M}echanics and {A}nalysis},
  240(2):809--875, 2021.

\bibitem{ferreira2024asymptotic}
M.~A. Ferreira, J.~Lukkarinen, A.~Nota, and J.~J.~L. Vel{\'a}zquez.
\newblock Asymptotic localization in multicomponent mass conserving coagulation
  equations.
\newblock {\em Pure and Applied Analysis}, 6(3):731--764, 2024.

\bibitem{ferreira2025global}
M.~A. Ferreira and S.~Pirnes.
\newblock Global existence of measure-valued solutions to the multicomponent
  {S}moluchowski coagulation equation.
\newblock {\em arXiv preprint arXiv:2504.10306}, 2025.

\bibitem{friendlander2000smoke}
S.~Friendlander.
\newblock Smoke, dust and haze: Fundamentals of aerosol dynamics.
\newblock {\em Oxford University Press, New York, USA}, 20(0):0, 2000.

\bibitem{gueron1995dynamics}
S.~Gueron and S.~A. Levin.
\newblock The dynamics of group formation.
\newblock {\em Mathematical {B}iosciences}, 128(1-2):243--264, 1995.

\bibitem{hoogendijk2024gelation}
J.~Hoogendijk, I.~Kryven, and C.~Schenone.
\newblock Gelation and localization in multicomponent coagulation with
  multiplicative kernel through branching processes.
\newblock {\em Journal of {S}tatistical {P}hysics}, 191(7):91, 2024.

\bibitem{krapivsky1996aggregation}
P.~Krapivsky and E.~Ben-Naim.
\newblock Aggregation with multiple conservation laws.
\newblock {\em Physical Review E}, 53(1):291, 1996.

\bibitem{lasserre2009moments}
J.~B. Lasserre.
\newblock {\em Moments, positive polynomials and their applications}, volume~1.
\newblock World Scientific, 2009.

\bibitem{leyvraz1981singularities}
F.~Leyvraz and H.~R. Tschudi.
\newblock Singularities in the kinetics of coagulation processes.
\newblock {\em Journal of Physics A: Mathematical and General},
  14(12):3389--3405, 1981.

\bibitem{menon2004approach}
G.~Menon and R.~L. Pego.
\newblock Approach to self-similarity in {S}moluchowski's coagulation
  equations.
\newblock {\em Communications on Pure and Applied Mathematics},
  57(9):1197--1232, 2004.

\bibitem{norris2000cluster}
J.~R. Norris.
\newblock Cluster coagulation.
\newblock {\em Communications in Mathematical Physics}, 209(2):407--435, 2000.

\bibitem{PerelsonSamsel1982}
R.~Samsel and A.~Perelson.
\newblock Kinetics of rouleau formation. {I}. {A} mass action approach with
  geometric features.
\newblock {\em Biophysical Journal}, 37(2):493--514, 1982.

\bibitem{PerelsonSamsel1984}
R.~Samsel and A.~Perelson.
\newblock Kinetics of rouleau formation. {II}. {R}eversible reactions.
\newblock {\em Biophysical Journal}, 45(4):805--824, 1984.

\end{thebibliography}

\end{document}